\newcommand{\w}{\omega}
\newcommand{\setmap}{\multimap}
\newcommand{\la}{\langle}
\newcommand{\ra}{\rangle}
\newcommand{\IN}{\mathbb N}
\newcommand{\Lip}{\mathrm{Lip}}
\newcommand{\e}{\varepsilon}
\newcommand{\Comp}{\mathrm{Comp}}
\newcommand{\IR}{\mathbb R}
\newcommand{\IZ}{\mathbb Z}
\newcommand{\IC}{\mathbb C}
\newcommand{\RP}{{\mathbb R}\mathsf{P}}
\newtheorem{theorem}{Theorem}
\newtheorem{lemma}{Lemma}
\newtheorem{proposition}{Proposition}
\newtheorem{problem}{Problem}
\theoremstyle{definition}
\newtheorem{definition}{Definition}
\newtheorem{remark}{Remark}
\newtheorem{example}{Example}
\title[Micro and Macro Fractals]{Micro and Macro Fractals generated\\ by multi-valued dynamical systems}
\author{Taras Banakh and Natalia Novosad}
\address{T.Banakh: Ivan Franko National University of Lviv, Ukraine, and Jan Kochanowski University in Kielce, Poland}
\email{t.o.banakh@gmail.com}
\address{N.Novosad: Institute for Applied Problems of Mechanics and Mathematics, Naukova 3b, Lviv, Ukraine}
\email{natalia.kasper@gmail.com}
\subjclass{54H20, 37M05, 54C60}
\begin{document}
\begin{abstract} Given a multi-valued function $\Phi:X\setmap X$ on a topological space $X$ we study the properties of its {\em fixed fractal\/} $\Fractal[\Phi]$, which is defined as the closure of the orbit $\Phi^\w(\Fixx[\Phi])=\bigcup_{n\in\w}\Phi^n(\Fixx[\Phi])$ of the set $\Fixx[\Phi]=\{x\in X:x\in\Phi(x)\}$ of fixed points of $\Phi$. A special attention is paid to the duality between {\em micro-fractals} and {\em macro-fractals}, which are fixed fractals $\Fractal[\Phi]$ and $\Fractal[\Phi^{-1}]$ for a contracting compact-valued function $\Phi:X\setmap X$ on a complete metric space $X$. With help of algorithms (described in this paper) we generate various images of macro-fractals which are dual to some well-known micro-fractals like the  fractal cross, the Sierpi\'nski triangle, Sierpi\'nski carpet, the Koch curve, or the fractal snowflakes. The obtained images show that macro-fractals have a large-scale fractal structure, which becomes clearly visible after a suitable zooming.
\end{abstract}

\maketitle

\section{Introduction}

In this paper we generalize the classical theory of deterministic fractals and for each multi-valued function $\Phi:X\setmap X$ on a topological space $X$ we define its fixed fractal $\Fractal[\Phi]\subset X$ as the closure $\bar\Phi^\w(\Fixx[\Phi])$ of the orbit $\Phi^\w(\Fixx[\Phi])=\bigcup_{n\in\w}\Phi^n(\Fixx[\Phi])$ of the set $\Fixx[\Phi]=\{x\in X:x\in\Phi(x)\}$ of fixed points of $\Phi$. This definition of a fractal agrees with the classical definition of a deterministic fractal because for a system of contracting functions $f_1,\dots,f_m:X\to X$ on a complete metric space $X$ the fractal $\Fractal[\Phi]$ of the multi-valued function $\Phi:x\mapsto\{f_1(x),\dots,f_m(x)\}$ coincides with the attractor of the IFS $\{f_1,\dots,f_m\}$ defined in the standard way, see \cite{Barnsley}.

By a {\em multi-valued function} (briefly, a {\em multi-function}) $\Phi:X\setmap X$ on a topological space $X$ we understand any subset $\Phi\subset X\times X$. For a subset $A\subset X$ by $\Phi(A)=\{y\in X:\exists x\in A\;(x,y)\in\Phi\}$ we denote its image under $\Phi$. We put $\Phi^0(A)=A$ and $\Phi^{n+1}(A)=\Phi(\Phi^n(A))$ for $n\ge 0$. The set $\Phi^\w(A)=\bigcup_{n\in\w}\Phi^n(A)$ is called the {\em orbit} of $A$. The closure $\bar\Phi^\w(\Fixx[\Phi])$ of the orbit $\Phi^\w(\Fixx[\Phi])$ of the set $\Fixx[\Phi]=\{x\in X:x\in\Phi(x)\}$ is called the {\em fixed fractal} of $\Phi$ and is denoted by $\Fractal[\Phi]$. A pair $(X,\Phi)$ is called a {\em multi-valued dynamical system} \cite{Akin}.
%It follows that fixed fractals belong to the class of orbit fractals, see \cite{??}.

Fixed fractals of multi-valued dynamical systems are central objects of our study. We start with the following  three problems related to fixed fractals.

\begin{problem} Study the interplay between properties of a multi-valued function $\Phi:X\setmap X$ and properties of its fixed fractal $\Fractal[\Phi]$.
\end{problem}

For each multi-valued function $\Phi:X\setmap X$ on a topological space $X$ we can consider its inverse multi-valued function $$\Phi^{-1}:X\setmap X,\;\;\;\Phi^{-1}:y\mapsto\{x\in X:y\in\Phi(x)\},$$ and the corresponding fixed fractal $\Fractal[\Phi^{-1}]$. In such way, we obtain a dual pair of fixed fractals $\Fractal[\Phi]$ and $\Fractal[\Phi^{-1}]$. Observe that both these fractals are closures of orbits of the same set $\Fixx[\Phi]=\Fixx[\Phi^{-1}]$ of fixed points.

\begin{problem} Given a multi-valued function $\Phi:X\setmap X$ on a topological space $X$, study  the interplay between properties of the fixed fractal $\Fractal[\Phi]$ and its dual fixed fractal $\Fractal[\Phi^{-1}]$.
\end{problem}

There are many algorithms that allow us to see fractals of various sorts.

\begin{problem} Elaborate effective algorithms for visualizing fixed fractals $\Fractal[\Phi]$ of multi-valued functions \mbox{$\Phi:X\setmap X$}  defined on simple spaces $X$ (like the plane).
\end{problem}

In fact, our initial motivation was to study the duality between {\em micro-fractals}, i.e., fixed fractals $\Fractal[\Phi]$ of contracting compact-valued functions $\Phi:X\setmap X$ on complete metric spaces $X$ and their dual fixed fractals $\Fractal[\Phi^{-1}]$ called {\em macro-fractals}. Such fractals will be considered in Sections~\ref{s:micro} and \ref{s:macro}. In Section~\ref{s:algo} we describe some algorithms of drawing fixed fractals. The obtained images indicate that macro-fractals have a non-trivial fractal structure seen on a macro-scale, in contrast to micro-fractals whose fractal structure can be seen only on the micro-scale.

\section{Some general facts on multi-valued functions and their fixed fractals}

We have defined the fixed fractal $\Fractal[\Phi]$ of a multi-valued function $\Phi:X\setmap X$ on a topological space $X$ as the closure $\bar\Phi^\w(\Fixx[\Phi])$ of the orbit $\Phi^\w(\Fixx[\Phi])=\bigcup_{n\in\w}\Phi^n(\Fixx[\Phi])$ of the set $\Fixx[\Phi]=\{x\in X:x\in\Phi(x)\}$ of fixed points of $\Phi$.

It follows that $\Phi^0(\Fixx[\Phi])=\Fixx[\Phi]\subset\Phi(\Fixx[\Phi])$ and by induction, $\Phi^n(\Fixx[\Phi])\subset\Phi^{n+1}(\Fixx[\Phi])$ for every $n\in\w$. This means that the sequence
of sets $\big(\Phi^n(\Fixx[\Phi])\big)_{n\in\w}$ is increasing. We shall prove that this sequence converges to the fixed fractal $\Fractal[\Phi]$ in the Vietoris topology on the power-set $2^X$.

The {\em Vietoris topology} on $2^X$ is generated by the sub-base consisting of the sets $$\la U\ra^+=\{A\in 2^X:A\cap U\ne\emptyset\}\mbox{ \ and \ }\la F\ra^-=\{A\in 2^X:A\cap F=\emptyset\}$$where $U$ and $F$ run over open and closed subsets of $X$, respectively.

\begin{proposition}\label{p:gen} For any multi-valued function $\Phi:X\setmap X$ on a topological space $X$ its fixed fractal $\Fractal[\Phi]$ is the limit of the increasing sequence of sets $\big(\Phi^n(\Fixx[\Phi])\big)_{n\in\w}$ in the Vietoris topology on $2^X$.
\end{proposition}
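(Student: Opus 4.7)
The plan is to check convergence directly against the sub-base of the Vietoris topology. A basic neighborhood of $\Fractal[\Phi]$ in $2^X$ is a finite intersection of sub-basic sets of the form $\la U\ra^+$ (with $U$ open) and $\la F\ra^-$ (with $F$ closed), each of which contains $\Fractal[\Phi]$. So it suffices to show that for every such sub-basic neighborhood $\mathcal V$ of $\Fractal[\Phi]$, the set $\Phi^n(\Fixx[\Phi])$ belongs to $\mathcal V$ for all sufficiently large $n$.

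First I would handle the ``lower'' condition: suppose $\Fractal[\Phi]\in\la F\ra^-$, i.e., $\Fractal[\Phi]\cap F=\emptyset$ for some closed $F\subset X$. Since $\Phi^n(\Fixx[\Phi])\subset\Phi^\w(\Fixx[\Phi])\subset\bar\Phi^\w(\Fixx[\Phi])=\Fractal[\Phi]$ for every $n\in\w$, we get $\Phi^n(\Fixx[\Phi])\cap F=\emptyset$ for \emph{all} $n$. Thus $\Phi^n(\Fixx[\Phi])\in\la F\ra^-$ from the very start; the closedness of $F$ is not even needed here.

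Next I would handle the ``upper'' condition: suppose $\Fractal[\Phi]\in\la U\ra^+$, i.e., $\Fractal[\Phi]\cap U\ne\emptyset$ for some open $U\subset X$. Since $U$ is open and $\Fractal[\Phi]=\bar\Phi^\w(\Fixx[\Phi])$, the intersection $\Phi^\w(\Fixx[\Phi])\cap U=\bigcup_{n\in\w}\bigl(\Phi^n(\Fixx[\Phi])\cap U\bigr)$ is nonempty. Pick $n_0$ with $\Phi^{n_0}(\Fixx[\Phi])\cap U\ne\emptyset$. Using the monotonicity $\Phi^n(\Fixx[\Phi])\subset\Phi^{n+1}(\Fixx[\Phi])$ recorded just before the statement, we get $\Phi^n(\Fixx[\Phi])\cap U\ne\emptyset$ for every $n\ge n_0$, that is, $\Phi^n(\Fixx[\Phi])\in\la U\ra^+$ eventually.

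Combining these two bullets over the finitely many sub-basic factors of any basic neighborhood gives convergence. There is no real obstacle here: the two facts doing all the work are that $\Fractal[\Phi]$ is defined as a closure (so open sets meeting it already meet the orbit) and that the orbit is monotone increasing (so ``meets $U$ at stage $n_0$'' propagates to all later stages).
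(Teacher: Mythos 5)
Your proof is correct and follows essentially the same route as the paper: the $\la F\ra^-$ conditions hold for all $n$ because each $\Phi^n(\Fixx[\Phi])$ lies inside $\Fractal[\Phi]$, and the $\la U\ra^+$ conditions hold eventually because an open set meeting the closure meets the orbit, with monotonicity propagating this to all later stages. The only cosmetic difference is that the paper first normalizes the basic neighborhood to have a single $\la X\setminus U_0\ra^-$ factor, whereas you argue directly factor-by-factor over the sub-base.
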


\begin{proof} Given an open neighborhood ${\mathcal U}\subset 2^X$ of the fixed fractal $\Fractal[\Phi]=\bar\Phi^\w(\Fixx[\Phi])$ in the Vietoris topology of $2^X$, we need to find $N\in\IN$ such that $\Phi^n(\Fixx[\Phi])\in{\mathcal U}$ for all $n\ge N$. We lose no generality assuming that the neighborhood ${\mathcal U}$ is of the basic form:
$${\mathcal U}=\la X\setminus U_0\ra^-\cap \la U_1\ra^{+}\cap\dots\cap\la U_m\ra^+$$
for some non-empty open sets $U_0,U_1,\dots,U_m$. Since $\bar\Phi^\w(\Fixx[\Phi])\in{\mathcal U}$, for every $k\le m$ the closure $\bar\Phi^\w(\Fixx[\Phi])$ of the orbit $\Phi^\w(\Fixx[\Phi])$ meets the open set $U_k$. Then $\Phi^\w(\Fixx[\Phi])\cap U_k\ne\emptyset$ and $\Phi^{n_k}(\Fixx[\Phi])\cap U_k\ne\emptyset$ for some number $n_k\in\w$. Then for the number $N=\max\{n_1,\dots,n_m\}$ and each number $n\ge N$ the set $\Phi^n(\Fixx[\Phi])$ contains the union $\bigcup_{k=1}^m\Phi^{n_k}(\Fixx[\Phi])$ and hence meets each set $U_k$, $1\le k\le m$. Since $\Phi^n(\Fixx[\Phi])\subset\bar\Phi^\w(\Fixx[\Phi])\subset U_0$, we conclude that
$$\Phi^n(\Fixx[\Phi])\in\la X\setminus U_0\ra^-\cap\la U_1\ra^+\cap\dots\cap \la U_m\ra^+={\mathcal U}.$$
\end{proof}

\begin{remark} Proposition~\ref{p:gen} suggests a simple algorithm of drawing the fixed fractal $\Fractal[\Phi]$ of a multi-valued function $\Phi:X\setmap X$: {\em choose a sufficiently large $n$ and draw the set $\Phi^n(\Fixx[\Phi])$}. This set can be considered as an approximation of the fractal $\Fractal[\Phi]$.
\end{remark}

A characteristic feature of classical deterministic fractals is their self-similarity (see \cite{Barnsley}, \cite{Crownover}), which can be equivalently defined as the invariance of the fractal under the action of the multi-valued function.

A subset $A$ of a space $X$ will be called {\em $\Phi$-invariant} under the action of a multi-valued function $\Phi:X\setmap X$ if $\Phi(A)=A$. We shall show that the for each bi-continuous multi-valued function $\Phi:X\setmap X$ its fixed fractal $\Fractal[\Phi]$ is $\Phi$-invariant, so  self-similar in some sense.

We recall that a multi-valued function $\Phi:X\setmap X$ on a topological space $X$ is
\begin{itemize}
\item {\em lower semicontinuous} if for each open subset $U\subset X$ the set $\Phi^{-1}(U)=\{x\in X:\Phi(x)\cap U\ne\emptyset\}$ is open in $X$;
\item {\em upper semicontinuous} if for every closed subset $F \subset X$ the set $\Phi^{-1}(F)=\{x\in X:\Phi(x)\cap F\ne\emptyset\}$ is closed in $X$;
\item {\em continuous} if $\Phi$ is lower and upper semicontinuous.
\item {\em bi-continuous} if $\Phi$ and $\Phi^{-1}$ are continuous.
\end{itemize}

It is easy to see that a multi-valued function $\Phi:X\setmap X$ is continuous if and only if it is continuous as a single-valued function $\Phi:X\to 2^X$ into the power-set $2^X$ endowed with the Vietoris topology. In the following lemma by $\bar{A}$ we denote the closure of a subset $A\subset X$ in a topological space $X$.

\begin{lemma}\label{l:cont} Let $\Phi:X\setmap X$ be a multi-valued function on a topological space $X$, and $A\subset X$.
\begin{enumerate}
\item If $\Phi$ is lower semicontinuous, then $\Phi(\bar A)\subset\overline{\Phi(A)}$;
\item If $\Phi^{-1}$ is upper semicontinuous, then $\Phi(\bar A)\supset\overline{\Phi(A)}$;
\item If $\Phi$ is bi-continuous, then $\Phi(\bar A)=\overline{\Phi(A)}$.
\end{enumerate}
\end{lemma}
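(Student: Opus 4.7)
The plan is to prove the three parts in sequence, with part (3) being the immediate conjunction of (1) and (2) under the bi-continuity hypothesis.

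For part (1), I will argue pointwise. Fix $y\in\Phi(\bar A)$, so $y\in\Phi(x)$ for some $x\in\bar A$, and fix an arbitrary open neighborhood $U$ of $y$; the goal is to produce a point of $\Phi(A)\cap U$. The natural move is to convert the constraint on $y$ into a constraint on $x$, namely $x\in\Phi^{-1}(U)=\{z\in X:\Phi(z)\cap U\ne\emptyset\}$, which holds because $y\in\Phi(x)\cap U$. Lower semicontinuity of $\Phi$ makes $\Phi^{-1}(U)$ open, so as an open neighborhood of a point of $\bar A$ it must meet $A$ at some $a\in A$; any $y'\in\Phi(a)\cap U$ then lies in $\Phi(A)\cap U$, witnessing $y\in\overline{\Phi(A)}$.

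For part (2), I will first reformulate the hypothesis via the set-theoretic identity $\{y\in X:\Phi^{-1}(y)\cap F\ne\emptyset\}=\Phi(F)$, valid for every $F\subset X$. Applying the author's definition of upper semicontinuity to the multi-function $\Phi^{-1}$ then reveals that the assumption is precisely the condition that $\Phi(F)$ is closed whenever $F$ is closed. Given this reformulation, the conclusion is formal: $\bar A$ is closed, hence $\Phi(\bar A)$ is closed, and the inclusion $\Phi(A)\subset\Phi(\bar A)$ forces $\overline{\Phi(A)}\subset\Phi(\bar A)$.

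The main obstacle I expect is the bookkeeping in part (2): one must correctly unpack what it means for $\Phi^{-1}$ to be upper semicontinuous in the author's convention, using the identity above to see that the pre-image operator applied to the inverse multi-function recovers the forward image of $\Phi$. Once that reformulation is in place, (2) collapses to a one-line argument, (1) is a direct chase through definitions, and (3) follows by combining the two inclusions.
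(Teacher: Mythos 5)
Your proof is correct, and while part (1) matches the paper, part (2) takes a genuinely different route. For (1), the paper argues by contradiction with the specific open set $U=X\setminus\overline{\Phi(A)}$, whereas you run the same mechanism forwards with an arbitrary neighborhood $U$ of a point $y\in\Phi(\bar A)$; both hinge on the fact that $\Phi^{-1}(U)$ is an open set meeting $\bar A$ and hence meeting $A$, so these are essentially the same argument. For (2), the paper again works pointwise by contradiction: it takes $y\in\overline{\Phi(A)}\setminus\Phi(\bar A)$, uses upper semicontinuity of $\Phi^{-1}$ in its complementary form to get that $V=\{z\in X:\Phi^{-1}(z)\subset X\setminus\bar A\}$ is an open neighborhood of $y$, finds $b\in\Phi(a)\cap V$ for some $a\in A$, and derives the contradiction $a\in\Phi^{-1}(b)\cap A\subset(X\setminus\bar A)\cap A=\emptyset$. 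You instead observe that, under the paper's stated definition, upper semicontinuity of $\Phi^{-1}$ says precisely that $(\Phi^{-1})^{-1}(F)=\{y\in X:\Phi^{-1}(y)\cap F\ne\emptyset\}$ is closed for every closed $F$, and your identity $\{y\in X:\Phi^{-1}(y)\cap F\ne\emptyset\}=\Phi(F)$ is correct, so the hypothesis is exactly the statement that $\Phi$ carries closed sets to closed sets; the inclusion $\overline{\Phi(A)}\subset\Phi(\bar A)$ is then immediate from $\Phi(A)\subset\Phi(\bar A)$ with $\Phi(\bar A)$ closed. Your reformulation is shorter and more transparent; the paper's version avoids having to identify the pre-image operator of $\Phi^{-1}$ with the forward image of $\Phi$, at the cost of a more intricate point chase. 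Part (3) is the conjunction of (1) and (2) in both treatments.
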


\begin{proof} 1. Assume that $\Phi$ is lower semicontinuous but $\Phi(\bar A)\not\subset\overline{\Phi(A)}$. Consider the open subset $U=X\setminus\overline{\Phi(A)}$ of $X$.  By the lower semicontinuity of $\Phi$, it inverse image $\Phi^{-1}(U)$ is open and intersects the set $\bar A$. Then we can find a point $a\in A\cap \Phi^{-1}(U)$. For this point $a$ we get $\Phi(a)\subset\Phi(A)\subset \overline{\Phi(A)}=X\setminus U$, which is not possible as $a\in\Phi^{-1}(U)$ and hence $\Phi(a)\cap U\ne \emptyset$.
\smallskip

2. Assume that $\Phi^{-1}$ is upper semicontinuous but  $\Phi(\bar A)\not\supset\overline{\Phi(A)}$. Then some point $y\in\overline{\Phi(A)}$ does not belong to $\Phi(\bar A)$ and hence $\Phi^{-1}(y)\cap \bar A=\emptyset$. Now consider the open subset $U=X\setminus\bar A$, which contains the set $\Phi^{-1}(y)$. Since the multi-function $\Phi^{-1}$ is upper semi-continuous, the set $V=\{z\in X:\Phi^{-1}(z)\subset U\}$ is open in $X$. Since $y\in V\cap\overline{\Phi(A)}$, there is a point $a\in A$ such that $\Phi(a)\cap V$ contains some point $b$. For this point we get $\Phi^{-1}(b)\subset U$. Then $a\in\Phi^{-1}(b)\cap A\subset U\cap A=\emptyset$, which is a desired contradiction.
\smallskip

3. The third item trivially follows from the first two items.
\end{proof}

Since $$\Phi(\Phi^\w\big(\Fixx[\Phi])\big)=\Phi\big(\bigcup_{n\in\w}\Phi^n(\Fixx[\Phi])\big)=\bigcup_{n\in\w}\Phi^{n+1}(\Fixx[\Phi])=\Phi^\w(\Fixx[\Phi]),$$ Lemma~\ref{l:cont} implies the following proposition:

\begin{proposition} Let $\Phi:X\multimap X$ be a multi-valued function on a topological space $X$.
\begin{enumerate}
\item  If $\Phi$ is lower semicontinuous, then $\Phi(\Fractal[\Phi])\subset\Fractal[\Phi]$.
\item If $\Phi^{-1}$ is upper semicontinuous, then $\Phi(\Fractal[\Phi])\supset\Fractal[\Phi]$.
\item If $\Phi$ is bi-continuous, then $\Phi(\Fractal[\Phi])=\Fractal[\Phi]$.
\end{enumerate}
\end{proposition}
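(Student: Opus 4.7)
The plan is to apply Lemma~\ref{l:cont} to the orbit set $A := \Phi^\w(\Fixx[\Phi])$, exploiting the key identity $\Phi(A) = A$ that is established in the displayed formula immediately preceding the proposition. Once $\Phi(A)=A$ is in hand, taking closures turns each of the three inclusions of Lemma~\ref{l:cont} into the corresponding statement about $\Fractal[\Phi] = \bar A$.

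For part (1), I would argue as follows. Since $\Phi$ is lower semicontinuous, Lemma~\ref{l:cont}(1) yields $\Phi(\bar A) \subset \overline{\Phi(A)}$. But by the displayed identity $\Phi(A)=A$, so $\overline{\Phi(A)} = \bar A = \Fractal[\Phi]$, and the left-hand side is $\Phi(\Fractal[\Phi])$. For part (2), the argument is symmetric: upper semicontinuity of $\Phi^{-1}$ gives, via Lemma~\ref{l:cont}(2), the reverse inclusion $\Phi(\bar A) \supset \overline{\Phi(A)} = \bar A$, i.e.\ $\Phi(\Fractal[\Phi]) \supset \Fractal[\Phi]$. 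Part (3) is then immediate by combining (1) and (2), since bi-continuity means $\Phi$ is lower semicontinuous and $\Phi^{-1}$ is (continuous, hence in particular) upper semicontinuous.

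There is essentially no obstacle here: the entire content is bookkeeping that reduces the statement about $\Fractal[\Phi]$ to Lemma~\ref{l:cont} applied to the single set $A = \Phi^\w(\Fixx[\Phi])$. The only ingredient beyond the lemma is the observation $\Phi(\Phi^\w(\Fixx[\Phi])) = \Phi^\w(\Fixx[\Phi])$, which holds because $\Phi\bigl(\bigcup_{n\in\w}\Phi^n(\Fixx[\Phi])\bigr) = \bigcup_{n\in\w}\Phi^{n+1}(\Fixx[\Phi])$ and the latter union equals $\Phi^\w(\Fixx[\Phi])$ because the sequence $(\Phi^n(\Fixx[\Phi]))_{n\in\w}$ is increasing (starting from $n=0$ since $\Fixx[\Phi] \subset \Phi(\Fixx[\Phi])$). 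So the proof really is a two-line deduction from the preceding lemma and identity, and I would write it out exactly in that form.
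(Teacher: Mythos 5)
Your proof is correct and follows exactly the paper's route: the paper likewise derives the proposition by noting $\Phi\big(\Phi^\w(\Fixx[\Phi])\big)=\bigcup_{n\in\w}\Phi^{n+1}(\Fixx[\Phi])=\Phi^\w(\Fixx[\Phi])$ and then invoking Lemma~\ref{l:cont} with $A=\Phi^\w(\Fixx[\Phi])$. Nothing is missing.
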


\section{Fixed fractals of contracting multi-functions on complete metric spaces}\label{s:micro}

In this section we consider the fixed fractals of contracting multi-valued functions on complete metric spaces. Such fractals will be called {\em micro-fractals}. The obtained results can be considered as a multi-valued generalization of the classical theory of deterministic fractals \cite{Barnsley},  (in spirit of  Ethan Akin \cite{Akin}, Jan Andres, Ji\v si Fi\v ser, and Miroslav Rypka \cite{AF}, \cite{AR}  who also used multi-valued functions in the theory of multi-valued dynamical systems and multivalued fractals).

Let $(X,d)$ be a metric space. To measure the distance between subsets $A,B\subset X$ we use the Hausdorff distance $$d_H(A,B)=\max\{\sup_{a\in A}d(a,B),\sup_{b\in B}d(b,A)\}\in[0,\infty]$$where $d(a,B)=\inf_{b\in B}d(a,b)$. This formula is well-defined only for non-empty subsets $A,B\subset X$. To extend the definition of the Hausdorff distance to the whole power-set $2^X$ we put $d_H(\emptyset,\emptyset)=0$ and $d_H(A,\emptyset)=d_H(\emptyset,A)=\infty$ for any non-empty subset $A\subset X$.

A multi-valued function $\Phi:X\setmap X$ on a metric space $(X,d)$ is called {\em Lipschitz} if it has finite {\em Lipschitz constant}
$$\Lip(\Phi)=\sup_{x\ne y}\frac{d_H\big(\Phi(x),\Phi(y)\big)}{d(x,y)}.$$ If $\Lip(\Phi)<1$, then $\Phi$ is called {\em contracting}.

It is easy to see that for contracting multi-functions, $\Phi_1,\dots,\Phi_n:X\setmap X$ (seen as subsets of $X\times X$) their union $\Phi=\Phi_1\cup\dots\cup\Phi_n$ is a contracting multi-function with Lipschitz constant $$\Lip(\Phi)\le\max\{\Lip(\Phi_1),\dots,\Lip(\Phi_n)\}.$$

A multi-valued function $\Phi:X\setmap X$ is called {\em compact-valued} (resp. {\em finite-valued\/}) if for each $x\in X$ the set $\Phi(x)\subset X$ is compact (finite) and not empty.

It is easy to see that each contracting compact-valued function $\Phi:X\setmap X$ of a metric space is continuous (as a single-valued function $\Phi:X\to 2^X$ into the power-set $2^X$ endowed with the Vietoris topology).

\begin{theorem}\label{t:micro} Let $\Phi:X\setmap X$ be a contracting compact-valued function on a complete metric space $X$ and let $\lambda=\Lip(\Phi)<1$.
\begin{enumerate}
\item The set $\Fixx[\Phi]$ of fixed points of $\Phi$ is compact and not empty.
\item If $\Phi$ is finite-valued, then the set $\Fixx[\Phi]$ is finite.
\item The micro-fractal $\Fractal[\Phi]=\bar\Phi^\w(\Fixx[\Phi])$ is compact and not empty.
\item $\Phi(\Fractal[\Phi])=\Fractal[\Phi]$.
\item $d_H\big(\Fractal[\Phi],\Phi^n(B)\big)\le \frac{\lambda^n}{1-\lambda}\,d_H(\Phi(B),B)$ for every $n\in\w$ and a non-empty compact subset $B\subset X$.
\end{enumerate}
\end{theorem}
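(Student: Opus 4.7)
The plan is to reduce everything to the set-map induced by $\Phi$, viewed as a self-map of the space of non-empty compact subsets of $X$ equipped with the Hausdorff metric $d_H$. I first verify that $\Phi(K)$ is compact whenever $K$ is (via a diagonal sequential-selection argument using the compactness of $K$ and of each $\Phi(x)$ together with $\Lip(\Phi)\le\lambda$), and that $\Phi$ is $\lambda$-Lipschitz in $d_H$ on compact sets. Since $X$ is complete, so is the Hausdorff hyperspace of non-empty compact subsets, and hence Banach's contraction principle yields a unique non-empty compact $A$ with $\Phi(A)=A$ together with the quantitative bound
\[
d_H\big(A,\Phi^n(K)\big)\le\frac{\lambda^n}{1-\lambda}\,d_H\big(\Phi(K),K\big)
\]
for every non-empty compact $K\subset X$. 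This estimate is the engine behind every item of the theorem.

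To prove (1), I run a Nadler-style construction: starting from any $x_0\in X$ and any $x_1\in\Phi(x_0)$, the compactness of each $\Phi(x_n)$ lets me recursively pick $x_{n+1}\in\Phi(x_n)$ nearest to $x_n$; the Hausdorff-Lipschitz bound then gives $d(x_n,x_{n+1})\le\lambda^n d(x_0,x_1)$, so the sequence is Cauchy, and its limit lies in $\Fixx[\Phi]$. Every $x\in\Fixx[\Phi]$ satisfies $\{x\}\subset\Phi^n(\{x\})$ for all $n$, and since $\Phi^n(\{x\})\to A$ in $d_H$, the point $x$ lies in $A$. Being the zero-set of the continuous function $y\mapsto d(y,\Phi(y))$, the set $\Fixx[\Phi]$ is closed in the compact set $A$, hence compact.

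For (2), suppose $\Phi$ is finite-valued but $\Fixx[\Phi]$ is infinite; compactness then yields distinct fixed points $x_n\to x_0\in\Fixx[\Phi]$. For each $n$ the Hausdorff bound furnishes $z_n\in\Phi(x_0)$ with $d(x_n,z_n)\le\lambda\,d(x_n,x_0)$, and finiteness of $\Phi(x_0)$ forces infinitely many $z_n$ to coincide with a common $z^\ast$, whence $z^\ast=x_0$ and $(1-\lambda)\,d(x_n,x_0)\le 0$, a contradiction. For (3)--(5) the crucial identification is $\Fractal[\Phi]=A$: since $\Fixx[\Phi]$ is non-empty and compact, the Banach estimate gives $\Phi^n(\Fixx[\Phi])\to A$ in $d_H$, and combined with the monotonicity of $\big(\Phi^n(\Fixx[\Phi])\big)_{n\in\w}$ noted earlier in the paper, this forces $\Fractal[\Phi]=\overline{\bigcup_n\Phi^n(\Fixx[\Phi])}=A$. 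Items (3), (4) and (5) are then the corresponding properties of $A$ together with the displayed estimate.

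The main technical obstacle is establishing that the induced set-map preserves compactness and is $\lambda$-Lipschitz in $d_H$, since every other step reduces to Banach's contraction principle, continuity, or a short counting argument. A secondary subtlety is checking that the Hausdorff limit of the increasing sequence $\big(\Phi^n(\Fixx[\Phi])\big)_{n\in\w}$ really coincides with the closure of their union, so that the abstract attractor $A$ can be identified with $\Fractal[\Phi]$.
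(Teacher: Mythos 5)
Your proposal is correct and follows essentially the same route as the paper: the induced $\lambda$-contraction on the hyperspace $\Comp(X)$ plus Banach's contraction principle, a Nadler-type iteration for non-emptiness of $\Fixx[\Phi]$, containment of $\Fixx[\Phi]$ in the attractor to get compactness, and the same contradiction scheme for finiteness in the finite-valued case. The only cosmetic differences are that you identify $\Fractal[\Phi]$ with the attractor via monotone convergence in the Hausdorff metric rather than via density of a single fixed point's orbit, and you phrase the finiteness argument with a convergent sequence of distinct fixed points instead of an isolation radius around a non-isolated point.
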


\begin{proof} It is well-known that the space $\Comp(X)\subset 2^X$ of non-empty compact subsets of $X$, endowed with the Hausdorff metric $d_H$ is a complete metric space, see e.g. \cite[2.7]{Barnsley}. The contracting compact-valued function $\Phi:X\setmap X$ can be seen as a contracting single-valued function $\Phi:X\to \Comp(X)$. Being contracting, this map is continuous. Consequently, for any non-empty compact subset $K\subset X$ its image $\{\Phi(x):x\in K\}$ is a compact subset of $\Comp(X)$ and then its union $\Phi(K)=\bigcup_{x\in K}\Phi(x)$ is a compact subset of $X$, see \cite[5.1]{Wicks}. This means that the single-valued function $$\Phi:\Comp(X)\to\Comp(X),\;\;\;\Phi:K\mapsto\Phi(K),$$ is well-defined. We claim that this function $\Phi$ is contracting and its Lipschitz constant is equal to the Lipschitz constant $\lambda<1$ of the function $\Phi:X\to\Comp(X)$.

We need to check that $d_H(\Phi(A),\Phi(B))\le\lambda\cdot d_H(A,B)$ for any compact sets $A,B\in\Comp(X)$. Given any point $x\in\Phi(A)=\bigcup_{a\in A}\Phi(a)$, find a point $a\in A$ with $x\in\Phi(a)$. By the compactness of $B$, there is a point $b\in B$ with $d(a,b)=d(a,B)$.
Then $$d(x,\Phi(B))\le d(x,\Phi(b))\le d_H(\Phi(a),\Phi(b))\le\lambda\cdot d(a,b)=\lambda\cdot d(a,B)\le \lambda\cdot d_H(A,B).$$
By analogy, for each point $y\in \Phi(B)$, we can prove that $d(y,\Phi(A))\le\lambda\cdot d_H(A,B)$. This implies that $$d_H(\Phi(A),\Phi(B))=\max\{\max_{x\in \Phi(A)}d(x,\Phi(B)),\max_{y\in\Phi(B)}d(y,\Phi(A))\}\le\lambda\cdot d_H(A,B).$$

Therefore, the function $\Phi:\Comp(X)\to\Comp(X)$ is contracting with Lipschitz constant $\le\lambda<1$.
Now Banach Contracting Principle \cite[\S3.3]{Crownover} implies that the contracting function $\Phi:\Comp(X)\to\Comp(X)$ has a unique fixed point $K\in 2^X$ which can be found as the limit of the sequence $\big(\Phi^n(B)\big)_{n\in\w}$ that starts with any compact set $B\in \Comp(X)$. Moreover, this sequence converges to its limit $K$ with the velocity
$$d_H(K,\Phi^n(B))\le\frac{\lambda^n}{1-\lambda}d_H(\Phi(B),B),\;\;n\in\w.$$

Now we are ready to prove the statements (1)--(5) of Theorem~\ref{t:micro}.
\smallskip

First we prove that the set $\Fixx[\Phi]$ is not empty. This will follow as soon as we check that $$d(x_0,\Fixx[\Phi])\le \frac1{1-\lambda}d(x_0,\Phi(x_0))$$for every point $x_0\in X$. Indeed, by the compactness of $\Phi(x_0)$, there is a point $x_1\in\Phi(x_0)$ with $d(x_0,x_1)=d(x_0,\Phi(x_0))$. Proceeding by induction, for every $n\in\w$, we can find a point $x_{n+1}\in\Phi(x_n)$ with $d(x_n,x_{n+1})=d(x_n,\Phi(x_n))$. We claim that so-defined sequence $(x_n)_{n\in\w}$ is Cauchy.
Indeed, for every $n\in\IN$ we get
$$d(x_{n},x_{n+1})=d(x_{n},\Phi(x_{n}))\le d_H(\Phi(x_{n-1}),\Phi(x_{n}))\le\lambda\cdot d(x_{n-1},x_{n})$$ and by induction, $$d(x_{n},x_{n+1})\le \lambda^n d(x_0,x_1)=\lambda^n d(x_0,\Phi(x_0)).$$Then for any $n<m$ we get
$$d(x_n,x_m)\le\sum_{i=n}^{m-1}d(x_i,x_{i+1})\le\sum_{i=n}^{m-1}\lambda^i d(x_0,\Phi(x_0))<\frac{\lambda^n}{1-\lambda}d(x_0,\Phi(x_0)),$$
which witnesses that the sequence $(x_n)_{n\in\w}$ is Cauchy and by the completeness of $X$, converges to some point $x_\infty\in X$ such that $d(x_n,x_\infty)\le\frac{\lambda^n}{1-\lambda}d(x_0,\Phi(x_0))$ for every $n\in\w$. The continuity of the function $\Phi:X\to\Comp(X)$ guarantees that the sequence $\big(\Phi(x_n)\big)_{n\in\w}$ converges to $\Phi(x_\infty)$ in $\Comp(X)$ and
$$x_\infty=\lim_{n\to\infty}x_{n+1}\in\lim_{n\to\infty}\Phi(x_n)=\Phi(\lim_{n\to\infty}x_n)=\Phi(x_\infty),$$
which means that $x_\infty$ is a fixed point of the multi-function $\Phi$ and hence $\Fixx[\Phi]\ni x_\infty$ is not empty. Moreover,
$$d(x_0,\Fixx[\Phi])\le d(x_0,x_\infty)\le\frac1{1-\lambda}d(x_0,\Phi(x_0)).$$
\smallskip

The continuity of the function $\Phi:X\to\Comp(X)$ implies that the set $\Fixx[\Phi]=\{x\in X:x\in\Phi(x)\}$ is closed in $X$. The compactness of $\Fixx[\Phi]$ will follow as soon as we check that $\Fixx[\Phi]\subset K$. Fix any point $x\in\Fixx[\Phi]$ and consider the increasing sequence $\big(\Phi^n(x)\big)_{n\in\w}$ of non-empty compact subsets of $X$. By the Banach Contraction Principle, this sequence tends to the unique fixed point $K$ of the contracting function $\Phi:\Comp(X)\to\Comp(X)$. This implies that the orbit $\Phi^\w(x)$ is dense in $K$.
Consequently, the orbit $\Phi^\w(\Fixx[\Phi])\supset\Phi^\w(x)$ of the set $\Fixx[\Phi]$ also is dense in $K$, which implies that $\Fixx[\Phi]\subset\bar\Phi^\w(\Fixx[\Phi])=K$. This completes the proof of the first statement and simultaneously proves the third, fourth and fifth statements.

It remains to prove that the set $\Fixx[\Phi]$ is finite if the function $\Phi$ is finite-valued.
If the compact set $\Fixx[\Phi]$ is infinite, then it contains a non-isolated point $x\in\Fixx[\Phi]$. Since the set $\Phi(x)$ is finite, there is $\e>0$ such that $d(x,z)>\e$ for each $z\in\Phi(x)\setminus \{x\}$. Now choose any point $y\in \Fixx[\Phi]$ with $0<d(y,x)<\e/2$. Since $d_H(\Phi(x),\Phi(y))\le\lambda\cdot d(y,x)$, for the point $y\in\Phi(y)$ there is a point $z\in \Phi(x)$ with $$d(y,z)=d(y,\Phi(x))\le d_H(\Phi(y),\Phi(x))\le\lambda\cdot d(y,x)<\e/2.$$
The point $z$ belongs to the set $\Phi(x)$ and lies on the distance
$$d(x,z)\le d(x,y)+d(y,z)\le\tfrac\e2+\tfrac\e2=\e$$from $x$. Now the choice of $\e$ guarantees that $z=x$ and hence $0<d(y,x)=d(y,z)\le\lambda\cdot d(y,x)<d(y,x)$, which is a desirable contradiction.
\end{proof}

\begin{remark} Theorem~\ref{t:micro}(5) suggests a (well-known) algorithm for drawing the micro-fractal $\Fractal[\Phi]=\bar\Phi^\w(\Fixx[\Phi])$ of $\Phi$ with a given precision $\e>0$: Take any non-empty compact subset $B\subset X$, calculate the Hausdorff distance $d_H(B,\Phi(B))$ and find a number $n\in\IN$ such that $\frac{\lambda^n}{1-\lambda}d_H(\Phi(B),B)<\e$. Then draw the set $\Phi^n(B)$. By Theorem~\ref{t:micro}(5), the set $\Phi^n(B)$ approximates the micro-fractal $\Fractal[\Phi]$ with precision  $d_H(\Phi^n(B),\Fractal[\Phi])<\e$.
\end{remark}

\section{Fixed fractals of $*$-repelling multi-functions on topological spaces}\label{s:macro}

In this section we define a class of multi-functions whose fixed fractals are called macro-fractals, and like micro-fractals, can be drawn by certain efficient algorithms.

\begin{definition} A multi-valued function $\Phi:X\setmap X$ on a topological space $X$ is defined to be {\em $*$-repelling} if the  family $$\{\Phi^{n+1}(\Fixx[\Phi])\setminus\Phi^{n}(\Fixx[\Phi])\}_{n\in\w}$$ is locally finite in $X$ in the sense that each point $x\in X$ has a neighborhood $O_x\subset X$ that intersects only finitely many sets $\Phi^{n+1}(\Fixx[\Phi])\setminus\Phi^{n}(\Fixx[\Phi])$, $n\in\w$.
\end{definition}

Fixed fractals of $*$-repelling multi-valued functions will be called {\em macro-fractals}.
The structure of macro-fractals is described in the following simple proposition that can be easily derived from the definition of the $*$-repelling property.

\begin{proposition}\label{p1:rep} Let $\Phi:X\setmap X$ be a $*$-repelling multi-function on a topological space $X$ such that for every $n\in\w$ the set $\Phi^n(\Fixx[\Phi])$ is closed in $X$. Then
\begin{enumerate}
\item the fixed fractal $\Fractal[\Phi]$ coincides with the orbit $\Phi^\w(\Fixx[\Phi])$ of the set $\Fixx[\Phi]$ and hence $\Phi(\Fractal[\Phi])=\Fractal[\Phi]$;
\item for any compact subset $E\subset X$ there is a number $n\in\w$ such that $E\cap\Fractal[\Phi]=E\cap\Phi^n(\Fixx[\Phi])$.
\end{enumerate}
\end{proposition}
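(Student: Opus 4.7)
The plan is to exploit the locally finite telescoping decomposition of the orbit. First I note that since the sequence $\Phi^n(\Fixx[\Phi])$ is increasing, I can write
\[
\Phi^\w(\Fixx[\Phi]) = \Fixx[\Phi] \cup \bigcup_{n\in\w}\bigl(\Phi^{n+1}(\Fixx[\Phi])\setminus\Phi^n(\Fixx[\Phi])\bigr),
\]
and the $*$-repelling hypothesis says precisely that the family on the right is locally finite. I would augment it with the set $\Fixx[\Phi] = \Phi^0(\Fixx[\Phi])$, which is closed by hypothesis; after relabeling this enlarged family is still locally finite (adding a single closed set preserves local finiteness).

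For statement (1), the key standard topological fact is that for any locally finite family $\{A_\alpha\}$ one has $\overline{\bigcup_\alpha A_\alpha} = \bigcup_\alpha \overline{A_\alpha}$. Applying this to the above decomposition,
\[
\Fractal[\Phi]=\overline{\Phi^\w(\Fixx[\Phi])}=\overline{\Fixx[\Phi]}\cup\bigcup_{n\in\w}\overline{\Phi^{n+1}(\Fixx[\Phi])\setminus\Phi^n(\Fixx[\Phi])}\subset\bigcup_{n\in\w}\Phi^{n+1}(\Fixx[\Phi])=\Phi^\w(\Fixx[\Phi]),
\]
where I used that each closure $\overline{\Phi^{n+1}(\Fixx[\Phi])\setminus\Phi^n(\Fixx[\Phi])}$ is trapped inside the closed set $\Phi^{n+1}(\Fixx[\Phi])$. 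The reverse inclusion is immediate, so $\Fractal[\Phi]=\Phi^\w(\Fixx[\Phi])$. The $\Phi$-invariance then follows from a one-line index shift: $\Phi(\Phi^\w(\Fixx[\Phi]))=\bigcup_{n\ge 1}\Phi^n(\Fixx[\Phi])$, and since $\Fixx[\Phi]\subset\Phi(\Fixx[\Phi])$ the missing index $n=0$ contributes nothing new, giving $\Phi(\Fractal[\Phi])=\Fractal[\Phi]$.

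For statement (2), I use compactness against local finiteness in the usual way. Each $x\in E$ has a neighborhood $O_x$ meeting only finitely many layers $\Phi^{n+1}(\Fixx[\Phi])\setminus\Phi^n(\Fixx[\Phi])$; finitely many such $O_x$ cover $E$, so $E$ itself meets only finitely many layers. Let $N\in\w$ be an upper bound for the indices of those layers. Then every point of $E\cap\Fractal[\Phi]=E\cap\Phi^\w(\Fixx[\Phi])$ that is not already in $\Phi^N(\Fixx[\Phi])$ would have to lie in some layer of index $\ge N$, which is excluded; hence $E\cap\Fractal[\Phi]=E\cap\Phi^N(\Fixx[\Phi])$.

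I expect no serious obstacle: the entire argument reduces to (i) the general topology lemma that a locally finite family of sets has closure-of-union equal to union-of-closures, and (ii) compactness reducing locally finite to genuinely finite. The only point requiring mild care is the bookkeeping ensuring that the closure of a layer is absorbed by the next closed iterate, which is where the hypothesis that each $\Phi^n(\Fixx[\Phi])$ is closed gets used.
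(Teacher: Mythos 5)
Your proof is correct. The paper actually gives no proof of this proposition --- it only remarks that it ``can be easily derived from the definition of the $*$-repelling property'' --- and your argument is exactly the intended derivation: the standard fact that a locally finite family satisfies $\overline{\bigcup_\alpha A_\alpha}=\bigcup_\alpha\overline{A_\alpha}$, with each layer's closure absorbed into the closed set $\Phi^{n+1}(\Fixx[\Phi])$, and compactness turning local finiteness into finiteness for part (2).
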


For a finite-valued function $\Phi:X\setmap X$ on a $T_1$-space $X$ the $*$-repelling property of $\Phi$ is equivalent to the discrete property of its fixed fractal $\Fractal[\Phi]$. Let us recall that a topological space $X$ is called a {\em $T_1$-space} if each finite subset of $X$ is closed in $X$.

\begin{proposition}\label{p2:rep} A finite-valued function $\Phi:X\setmap X$ on a $T_1$-space $X$ with finite set $\Fixx[\Phi]$ is $*$-repelling if and only if its fixed fractal $\Fractal[\Phi]$ is discrete.
\end{proposition}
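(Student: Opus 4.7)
The plan is to split the proof into the two implications, preceded by a common preliminary observation about the orbit structure.

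As a preliminary step I would show, by induction on $n$, that each iterate $\Phi^n(\Fixx[\Phi])$ is finite: the base case is the hypothesis that $\Fixx[\Phi]$ is finite, and the inductive step uses that $\Phi$ is finite-valued, so $\Phi^{n+1}(\Fixx[\Phi])$ is a finite union of finite sets. Since $X$ is $T_1$, each $\Phi^n(\Fixx[\Phi])$ is therefore closed in $X$. This places us squarely in the setting of Proposition~\ref{p1:rep} whenever $\Phi$ is $*$-repelling, so in the forward direction we immediately obtain $\Fractal[\Phi]=\Phi^\w(\Fixx[\Phi])$, and the sets $\Phi^{n+1}(\Fixx[\Phi])\setminus\Phi^n(\Fixx[\Phi])$ together with $\Fixx[\Phi]$ form a disjoint partition of $\Fractal[\Phi]$.

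For the ``only if'' direction, fix $x\in\Fractal[\Phi]$. The $*$-repelling property supplies a neighborhood $V_x$ of $x$ meeting only finitely many difference sets $\Phi^{n+1}(\Fixx[\Phi])\setminus\Phi^n(\Fixx[\Phi])$; since each such set is finite and $\Fixx[\Phi]$ is finite, the intersection $V_x\cap\Fractal[\Phi]$ is finite. The $T_1$ axiom then lets me remove the finitely many unwanted points of $V_x\cap\Fractal[\Phi]$ (each a closed singleton) to obtain a smaller neighborhood $U_x$ with $U_x\cap\Fractal[\Phi]=\{x\}$. Hence every point of $\Fractal[\Phi]$ is isolated in $\Fractal[\Phi]$, i.e., $\Fractal[\Phi]$ is discrete.

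For the ``if'' direction, assume $\Fractal[\Phi]$ is discrete. Since $\Fractal[\Phi]$ is by definition the closure of $\Phi^\w(\Fixx[\Phi])$, and a dense subset of a discrete space necessarily exhausts it, we have $\Fractal[\Phi]=\Phi^\w(\Fixx[\Phi])$; in particular the difference sets $\Phi^{n+1}(\Fixx[\Phi])\setminus\Phi^n(\Fixx[\Phi])$ are pairwise disjoint subsets of $\Fractal[\Phi]$. Now given an arbitrary $x\in X$, there are two cases. If $x\notin\Fractal[\Phi]$, the closedness of $\Fractal[\Phi]$ provides a neighborhood of $x$ disjoint from $\Fractal[\Phi]$, hence from every difference set. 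If $x\in\Fractal[\Phi]$, discreteness supplies a neighborhood $U$ with $U\cap\Fractal[\Phi]=\{x\}$, which therefore meets at most one of the (pairwise disjoint) difference sets. Either way the local finiteness condition holds, so $\Phi$ is $*$-repelling.

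The whole argument is really an unpacking of the definitions; the only place where one must be attentive is the forward direction, where Proposition~\ref{p1:rep} can be invoked only after verifying that the iterates $\Phi^n(\Fixx[\Phi])$ are closed in $X$. This is precisely why both the $T_1$-hypothesis and the finite-valuedness of $\Phi$ are used in an essential way, and it is the sole non-trivial checking step.
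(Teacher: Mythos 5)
Your proof is correct and follows essentially the same route as the paper: induction gives finiteness (hence, by $T_1$, closedness) of each $\Phi^n(\Fixx[\Phi])$, local finiteness of the finite difference sets yields that the orbit is closed and discrete, and the converse is the routine verification the paper dismisses as trivial. The only cosmetic differences are that you obtain $\Fractal[\Phi]=\Phi^\w(\Fixx[\Phi])$ via Proposition~\ref{p1:rep} where the paper deduces it directly from the closedness of a locally finite union, and that you spell out the ``if'' direction explicitly.
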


\begin{proof} The ``if'' part is trivial. To prove the ``only if'' part, assume that the finite-valued function $\Phi$ is $*$-repelling.

Taking into account that the set $\Fixx[\Phi]=\Phi^0(\Fixx[\Phi])$ is finite, by induction, we can prove that for every $n\in\w$ the set $\Phi^n(\Fixx[\Phi])$ is finite. By the $*$-repelling property of $\Phi$, the family of finite sets $$\{\Phi^{n+1}(\Fixx[\Phi])\setminus\Phi^{n}(\Fixx[\Phi])\}_{n\in\w}$$ is locally finite in $X$ and consequently, its union $$\Phi^\w(\Fixx[\Phi])=\bigcup_{n\in\w}\Phi^{n+1}(\Fixx[\Phi])\setminus \Phi^n(\Fixx[\Phi])$$is a closed discrete subset of $X$. Then the fixed fractal $\Fractal[\Phi]=\bar\Phi^\w(\Fixx[\Phi])=\Phi^\w(\Fixx[\Phi])$ also is discrete.
\end{proof}

\begin{remark} Proposition~\ref{p1:rep} suggests a simple algorithm for drawing the fixed fractal of a $*$-repelling multi-valued function: {\em given a compact subset $E\subset X$} (which can be thought as a computer screen showing us the  piece $E\cap\Fractal[\Phi]$ of the fixed fractal), {\em find $n\in\w$ such that  $E\cap \Fractal[\Phi]=E\cap\Phi^n(\Fixx[\Phi])$ and draw the set} $E\cap\Phi^n(\Fixx[\Phi])$ (which coincides with the piece $E\cap\Fractal[\Phi]$ of the fixed fractal $\Fractal[\Phi]$).

For expanding multi-functions $\Phi$ (introduced below) the number $n$ can be calculated effectively.
\end{remark}

A multi-valued function $\Phi:X\setmap X$ on a complete metric space $X$ is defined to be {\em expanding} if its inverse multi-function $\Phi^{-1}:X\setmap X$ is compact-valued and contracting.

\begin{theorem}\label{t:macro} An expanding multi-valued function $\Phi:X\setmap X$ on a complete metric space $X$ is $*$-repelling if and only if for some $k\in\IN$ the number
$$\delta=\inf\{d(x,y):x\in \Phi^{k}(\Fixx[\Phi])\setminus\Phi^{k-1}(\Fixx[\Phi]),\;\;y\in\Fractal[\Phi^{-1}]\}$$is strictly positive. In this case for every $n\in\w$ and the neighborhood $$E_n=\Big\{x\in X:d(x,\Fractal[\Phi^{-1}])\le\frac{\delta}{\Lip(\Phi^{-1})^n}\Big\}$$ of the micro-fractal $\Fractal[\Phi^{-1}]$
we get $E_n\cap\Phi^\w(\Fixx[\Phi])=E_n\cap\Phi^{k+n}(\Fixx[\Phi])$.
\end{theorem}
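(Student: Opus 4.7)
The plan is to reduce both directions, as well as the concluding formula, to the single inequality
\begin{equation*}
d(y,\Fractal[\Phi^{-1}])\ge\delta/\lambda^n\qquad\text{whenever }y\in\Phi^{k+n}(\Fixx[\Phi])\setminus\Phi^{k+n-1}(\Fixx[\Phi]),
\end{equation*}
where $\lambda=\Lip(\Phi^{-1})<1$. The setup mirrors the proof of Theorem~\ref{t:micro}: the multi-function $\Phi^{-1}$ induces a $\lambda$-contraction of $\Comp(X)$, so $(\Phi^{-1})^n$ is a $\lambda^n$-contraction, and in particular $d_H\bigl((\Phi^{-1})^n(\{y\}),(\Phi^{-1})^n(\{y'\})\bigr)\le\lambda^n d(y,y')$ for all $y,y'\in X$. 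Moreover $\Fractal[\Phi^{-1}]$ is compact and invariant under $\Phi^{-1}$, hence under every $(\Phi^{-1})^n$.

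The key technical step is a lifting lemma: if $y\in\Phi^{k+n}(\Fixx[\Phi])\setminus\Phi^{k+n-1}(\Fixx[\Phi])$, then some $z\in\Phi^k(\Fixx[\Phi])\setminus\Phi^{k-1}(\Fixx[\Phi])$ satisfies $y\in\Phi^n(\{z\})$, equivalently $z\in(\Phi^{-1})^n(y)$. Existence of \emph{some} $z\in\Phi^k(\Fixx[\Phi])$ with $y\in\Phi^n(\{z\})$ is forced by $y\in\Phi^n(\Phi^k(\Fixx[\Phi]))$, and the contrapositive ``$z\in\Phi^{k-1}(\Fixx[\Phi])$ would entail $y\in\Phi^{k+n-1}(\Fixx[\Phi])$'' drives $z$ out of $\Phi^{k-1}(\Fixx[\Phi])$. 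Picking a nearest point $y^{\ast}\in\Fractal[\Phi^{-1}]$ to $y$ (available by compactness) and applying the $\lambda^n$-contraction to $\{y\}$ and $\{y^{\ast}\}$ yields some $z^{\ast}\in(\Phi^{-1})^n(\{y^{\ast}\})\subset\Fractal[\Phi^{-1}]$ with $d(z,z^{\ast})\le\lambda^n d(y,y^{\ast})$; combined with $d(z,\Fractal[\Phi^{-1}])\ge\delta$ this gives the displayed inequality.

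From the inequality the $(\Leftarrow)$ implication is routine: for any $x\in X$ the function $d(\cdot,\Fractal[\Phi^{-1}])$ is bounded on a small ball $B(x,r)$, and choosing $N$ with $\delta/\lambda^N$ larger than this bound forces $B(x,r)$ to miss every difference $\Phi^{k+m+1}(\Fixx[\Phi])\setminus\Phi^{k+m}(\Fixx[\Phi])$ with $m\ge N$. The concluding equality follows by writing $\Phi^\w(\Fixx[\Phi])$ as the disjoint union of $\Phi^{k+n}(\Fixx[\Phi])$ and the differences $\Phi^{k+m}(\Fixx[\Phi])\setminus\Phi^{k+m-1}(\Fixx[\Phi])$ for $m>n$, each such later difference having all points at distance $\ge\delta/\lambda^m>\delta/\lambda^n$ from $\Fractal[\Phi^{-1}]$ and so missing $E_n$.

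For $(\Rightarrow)$ I would exploit compactness of $\Fractal[\Phi^{-1}]$: a finite subcover by neighborhoods from the $*$-repelling condition produces an open $U\supset\Fractal[\Phi^{-1}]$ meeting only finitely many differences, so for some $N$ every difference with index $\ge N$ lies in $X\setminus U$; compactness of $\Fractal[\Phi^{-1}]$ then supplies $\e>0$ with the closed $\e$-thickening of $\Fractal[\Phi^{-1}]$ inside $U$, and taking $k=N+1$ yields $\delta\ge\e>0$. The genuine obstacle is the lifting lemma together with the careful transfer of Hausdorff contractivity to ordinary pointwise distances through the closest-point map onto the compact set $\Fractal[\Phi^{-1}]$; the rest is bookkeeping with the geometric factor $\lambda^n$.
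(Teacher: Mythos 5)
Your proposal is correct and follows essentially the same route as the paper: the ``only if'' direction uses compactness of $\Fractal[\Phi^{-1}]$ to extract a neighborhood missing a tail difference set, and the ``if'' direction lifts a point of $\Phi^{k+n}(\Fixx[\Phi])\setminus\Phi^{k+n-1}(\Fixx[\Phi])$ back through $n$ applications of $\Phi^{-1}$ to a point of $\Phi^{k}(\Fixx[\Phi])\setminus\Phi^{k-1}(\Fixx[\Phi])$ and propagates the distance estimate via the $\lambda$-contractivity of $\Phi^{-1}$ on $\Comp(X)$ together with nearest-point projections onto the invariant compact micro-fractal. The only difference is organizational: the paper runs this as a reverse-induction chain inside a proof by contradiction, whereas you isolate the direct inequality $d(y,\Fractal[\Phi^{-1}])\ge\delta/\lambda^{n}$ and derive both local finiteness and the $E_n$ identity from it, which is the same argument stated in contrapositive form.
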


\begin{proof} Since the multi-function $\Phi$ is expanding, its inverse $\Phi^{-1}$ is contracting and has Lipschitz constant $\lambda=\Lip(\Phi^{-1})<1$.
\smallskip

To prove the ``only if'' part, assume that the expanding multi-function $\Phi:X\setmap X$ is $*$-repelling.  By Theorem~\ref{t:micro}, the micro-fractal $\Fractal[\Phi^{-1}]$ of the contracting multi-function $\Phi^{-1}$ is compact. The $*$-repelling property of $\Phi$ implies that the family $\big\{\Phi^{k+1}(\Fixx[\Phi])\setminus\Phi^k(\Fixx[\Phi])\big\}_{k\in\w}$ is locally finite. Consequently, the compact set $\Fractal[\Phi^{-1}]$ has a neighborhood $U\subset X$ that meets only finitely many sets $\Phi^{k+1}(\Fixx[\Phi])\setminus\Phi^k(\Fixx[\Phi])$, $k\in\w$. Then for some $k\in\IN$ the sets $\Phi^{k+1}(\Fixx[\Phi])\setminus\Phi^k(\Fixx[\Phi])$ and $U$ are disjoint and the number
$$\delta=\inf\{d(x,y):x\in \Phi^{k}(\Fixx[\Phi])\setminus\Phi^{k-1}(\Fixx[\Phi]),\;\;y\in\Fractal[\Phi^{-1}]\}\ge d(\Fractal[\Phi^{-1}],X\setminus U)$$
is strictly positive by the compactness of the micro-fractal $\Fractal[\Phi^{-1}]$.
This completes the proof of the ``only if'' part of the theorem.
\smallskip

To prove the ``if'' part, assume that for some  $k\in\IN$ the
number
$$\delta=\inf\{d(x,y):x\in \Phi^{k}(\Fixx[\Phi])\setminus\Phi^{k-1}(\Fixx[\Phi]),\;\;y\in\Fractal[\Phi^{-1}]\}$$is strictly positive. The $*$-repelling property of $\Phi$ will follow as soon as we check that for every $n\in\w$ and the neighborhood $$E_n=\big\{x\in X:d(x,\Fractal[\Phi^{-1}])\le\delta/\lambda^n\big\}$$ of the micro-fractal $\Fractal[\Phi^{-1}]$
we get $E_n\cap\Phi^\w(\Fixx[\Phi])=E_n\cap\Phi^{k+n}(\Fixx[\Phi])$.

Assume conversely that the intersection $E_n\cap\Phi^\w(\Fixx[\Phi])$ contains some point $x\notin
E_n\cap\Phi^{k+n}(\Fixx[\Phi])$. Then $x\in\Phi^m(\Fixx[\Phi])\setminus\Phi^{m-1}(\Fixx[\Phi])$ for some $m>k+n$. Let $x_m=x$ and by reverse induction, for every non-negative $i<m$ choose a point $x_{i}\in \Phi^i(\Fixx[\Phi])$ such that  $x_{i+1}\in \Phi(x_i)$. Taking into account that $x_m\notin\Phi^{m-1}(\Fixx[\Phi])$, we conclude that $x_{m-1}\notin \Phi^{m-2}(\Fixx[\Phi])$ and by induction, $x_i\notin\Phi^{i-1}(\Fixx[\Phi])$ for all positive $i\le m$.

By the compactness of the micro-fractal $\Fractal[\Phi^{-1}]$, for every $i\le m$ there is a point $y_i\in\Fractal[\Phi^{-1}]$ such that $d(x_i,y_i)=d(x_i,\Fractal[\Phi^{-1}])$. It follows that $$\Phi^{-1}(y_i)\subset \Phi^{-1}(\Fractal[\Phi^{-1}])=\Fractal[\Phi^{-1}].$$

Taking into account that the function $\Phi^{-1}:X\to\Comp(X)$ is contracting with contracting constant $\lambda$,  we conclude that
$d(x_{i-1},y_{i-1})=d(x_{i-1},\Fractal[\Phi^{-1}])\le d(x_{i-1},\Phi^{-1}(y_i))\le d_H(\Phi^{-1}(x_i),\Phi^{-1}(y_i))\le\lambda\cdot d(x_{i},y_{i})$. Using this inequality, by induction we can prove  that
$$d(x_i,y_i)\le \lambda^{m-i}d(x_m,y_m)=\lambda^{m-i}d(x_m,\Fractal[\Phi^{-1}])\le\lambda^{m-i}\delta/\lambda^n$$
for every $i\le m$. In particular, for $i=k$ we get
$$d(x_k,\Fractal[\Phi^{-1}])\le d(x_k,y_k)\le\delta\cdot\lambda^{m-k-n}\le \lambda\cdot\delta<\delta$$which contradicts the definition of the number $\delta$ because $x_k\in\Phi^k(\Fixx[\Phi])\setminus\Phi^{k-1}(\Fixx[\Phi])$.
\end{proof}

\section{A simple example of a dual pair of micro and macro fractals}

In this section we consider a simple example of a multi-valued function $\Phi:\IR\setmap \IR$ for which the fixed fractals $\Fractal[\Phi]$ and $\Fractal[\Phi^{-1}]$ can be found explicitly in analytic form.

The function $\Phi:\IR\setmap\IR$ assigns to each point $x\in\IR$ the doubleton $\Phi(x)=\{3x,3x-2\}$. The inverse map $\Phi^{-1}$ is given by the formula $\Phi^{-1}(y)=\{\frac13y,\frac13y+\frac23\}$. The sets $\Fixx[\Phi]=\Fixx[\Phi^{-1}]$ of the fixed points of $\Phi$ and $\Phi^{-1}$ coincide with the doubleton $\{0,1\}$.

The multi-function $\Phi^{-1}$ is contracting being the union $\Phi^{-1}=f_0\cup f_1$ of two contracting
functions $f_1:x\mapsto \frac13x$ and $f_2:x\mapsto \frac13x+\frac23$, and has Lipschitz constant  $\Lip(\Phi)=\frac13$.

\begin{proposition} The fixed fractal $\Fractal[\Phi^{-1}]$ coincides with the Cantor set
$$\mu=\Big\{\sum_{k=1}^\infty 2x_k3^{-k}:(x_k)_{k=1}^\infty\in\{0,1\}^\IN\Big\}.$$
\end{proposition}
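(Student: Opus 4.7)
The plan is to identify $\mu$ as the unique non-empty compact fixed point of the contracting map $\Phi^{-1}:\Comp(\IR)\to\Comp(\IR)$ and then appeal to Theorem~\ref{t:micro}(5) to conclude $\mu=\Fractal[\Phi^{-1}]$. Since $\Phi^{-1}=f_0\cup f_1$ with $f_0(y)=\tfrac13 y$ and $f_1(y)=\tfrac13 y+\tfrac23$ is compact-valued and contracting with $\Lip(\Phi^{-1})=\tfrac13$ on the complete space $\IR$, Theorem~\ref{t:micro} applies and guarantees such a unique fixed compact set. In particular, taking $B=\mu$ in item~(5), once we verify $\Phi^{-1}(\mu)=\mu$ we obtain $d_H(\Fractal[\Phi^{-1}],\mu)=0$.

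First I would check that $\mu$ is non-empty and compact. For this I introduce the coding map $\pi:\{0,1\}^\IN\to\IR$, $\pi\big((x_k)\big)=\sum_{k=1}^\infty 2x_k3^{-k}$. The series converges uniformly in $(x_k)$ because it is dominated by the convergent geometric series $\sum 2\cdot 3^{-k}$, so $\pi$ is continuous; as $\{0,1\}^\IN$ is compact in the product topology, its image $\mu=\pi(\{0,1\}^\IN)$ is a non-empty compact subset of $\IR$.

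Next I verify the self-similarity relation $\Phi^{-1}(\mu)=f_0(\mu)\cup f_1(\mu)=\mu$. A direct computation gives
$$f_0(\pi(x_1,x_2,\dots))=\tfrac13\sum_{k=1}^\infty 2x_k3^{-k}=\sum_{j=2}^\infty 2x_{j-1}3^{-j}=\pi(0,x_1,x_2,\dots),$$
$$f_1(\pi(x_1,x_2,\dots))=\tfrac23+\tfrac13\sum_{k=1}^\infty 2x_k3^{-k}=\pi(1,x_1,x_2,\dots),$$
so $f_0(\mu)$ consists of all points of $\mu$ whose first ternary digit (in the $\{0,1\}$-coding) is $0$, and $f_1(\mu)$ of those whose first digit is $1$. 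Every element of $\mu$ falls into exactly one of the two classes, which yields $\Phi^{-1}(\mu)=\mu$.

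Finally, applying Theorem~\ref{t:micro}(5) to the contracting compact-valued function $\Phi^{-1}$ with $B=\mu$ gives, for every $n\in\w$,
$$d_H\big(\Fractal[\Phi^{-1}],(\Phi^{-1})^n(\mu)\big)\le \frac{(1/3)^n}{1-1/3}\,d_H\big(\Phi^{-1}(\mu),\mu\big)=0,$$
and $(\Phi^{-1})^n(\mu)=\mu$ by the invariance just established, whence $\Fractal[\Phi^{-1}]=\mu$. There is no serious obstacle here; the only mildly delicate point is the bookkeeping in step two, where one has to be careful that the map $\pi$ is really a bijective coding of $\mu$ in the right way so that the digit shift corresponds exactly to the two branches of $\Phi^{-1}$.
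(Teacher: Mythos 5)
Your argument is correct and follows essentially the same route as the paper: both identify $\mu$ as a non-empty compact set satisfying $\Phi^{-1}(\mu)=\mu$ and then invoke the uniqueness of the fixed point of the contraction $\Phi^{-1}:\Comp(\IR)\to\Comp(\IR)$ established in (the proof of) Theorem~\ref{t:micro}, your use of item (5) with $B=\mu$ being just an explicit quantitative form of that uniqueness. The paper leaves the compactness of $\mu$ and the shift computation $f_0(\mu)\cup f_1(\mu)=\mu$ implicit, whereas you spell them out.
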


\begin{proof} By (the proof) of Theorem~\ref{t:micro}, the micro-fractal $\Fractal[\Phi^{-1}]$ of $\Phi^{-1}$ is the unique fixed point of the contracting function $\Phi^{-1}:\Comp(X)\to\Comp(X)$. Since $\Phi^{-1}(\mu)=\mu$, we conclude that $\Fractal[\Phi^{-1}]$ coincides with the Cantor set $\mu$.
\end{proof}

Next, we prove that the fixed fractal $\Fractal[\Phi]$ of the  multi-function $\Phi:x\mapsto\{3x,3x-2\}$ is the union of two isometric copies of the macro-Cantor set
$$M=\Big\{\sum_{k=0}^{\infty} 2x_k3^{k}:(x_k)_{k\in\w}\in\{0,1\}^\w,\;\sum_{k=0}^\infty x_i<\infty\Big\},$$
well-known in the Asymptotic Topology as the asymptotic counterpart of the Cantor set, see \cite{BZ}, \cite{DZ}.

\begin{proposition} The fixed fractal $\Fractal[\Phi]$ coincides with the union $(-M)\cup(M+1)$.
\end{proposition}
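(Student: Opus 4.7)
The plan is to compute the orbit $\Phi^\w(\Fixx[\Phi])$ explicitly and then verify that it is already closed in $\IR$, so that taking the closure adds nothing. A direct induction on $n$ shows that for any $x_0\in\IR$,
\[
\Phi^n(x_0)=\Big\{3^n x_0-2\sum_{j=0}^{n-1}\epsilon_j 3^j:\epsilon_0,\dots,\epsilon_{n-1}\in\{0,1\}\Big\}.
\]
Specializing to $x_0=0$ gives $\Phi^n(0)=\{-2\sum_{j\in F}3^j:F\subset\{0,\dots,n-1\}\}$, and taking the union over $n\in\w$ recovers exactly $-M$.

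For $x_0=1$ the key step is the telescoping identity $3^n=1+2(3^0+3^1+\cdots+3^{n-1})$. Substituting it into the formula above yields
\[
3^n-2\sum_{j=0}^{n-1}\epsilon_j 3^j=1+2\sum_{j=0}^{n-1}(1-\epsilon_j)3^j,
\]
and because $\epsilon\mapsto 1-\epsilon$ is a bijection of $\{0,1\}$, the set $\Phi^n(1)$ coincides with $\{1+2\sum_{j\in F}3^j:F\subset\{0,\dots,n-1\}\}$. Taking the union over $n$ then gives $\Phi^\w(1)=M+1$. Combining, $\Phi^\w(\Fixx[\Phi])=\Phi^\w(0)\cup\Phi^\w(1)=(-M)\cup(M+1)$.

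Finally I would argue that this union is closed in $\IR$. The sets $-M$ and $M+1$ lie in the disjoint closed half-lines $(-\infty,0]$ and $[1,\infty)$, so they cannot accumulate on each other. Each is separately locally finite: an element $2\sum_{j\in F}3^j\in M$ of absolute value at most $R$ forces $3^{\max F}\le R/2$, leaving only finitely many admissible finite subsets $F\subset\w$. Hence $(-M)\cup(M+1)$ is a closed discrete subset of $\IR$ and therefore coincides with its closure, which by definition is $\Fractal[\Phi]$. The main obstacle is spotting the algebraic identity $3^n=1+2(1+3+\cdots+3^{n-1})$, which converts the apparently inhomogeneous orbit of $1$ into the clean translate $M+1$ of the macro-Cantor set; once this substitution is made, the rest is routine bookkeeping and a crude growth estimate.
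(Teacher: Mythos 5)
Your proof is correct and follows essentially the same route as the paper: an explicit induction yielding $\Phi^n(0)$ and $\Phi^n(1)$ (the paper writes out the case of $0$ and handles $1$ ``by analogy,'' which your telescoping identity $3^n=1+2\sum_{j=0}^{n-1}3^j$ makes explicit), followed by the observation that the orbit is already closed. The only cosmetic difference is in the last step, where the paper simply notes that $\Fixx[\Phi]\subset\IZ$ and $\Phi(\IZ)\subset\IZ$, so the orbit is a subset of the closed discrete set $\IZ$, whereas you prove discreteness directly via the growth estimate $3^{\max F}\le R/2$; both are valid.
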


\begin{proof} Observe that $\Fixx[\Phi]=\{0,1\}\subset\IZ$ and $\Phi(\IZ)\subset\IZ$. Consequently, the fixed fractal $\Fractal[\Phi]=\Phi^\w(\{0,1\})\subset\IZ$ is discrete and by Proposition~\ref{p2:rep}, the multi-function $\Phi^{-1}$ is $*$-repelling.

By induction, we shall prove that for every $n\in\w$
$$\Phi^n(0)=\big\{-\sum_{i=0}^{n-1}2x_i3^i:(x_i)_{i=0}^{n-1}\in\{0,1\}^n\big\}.$$
This equality holds for $n=0$. Assume that for some $n\in\w$ this equality has been proved. Then
$$
\begin{aligned}
\Phi^{n+1}(0)&=3\cdot\Phi^n(0)\cup(3\cdot\Phi^n(0)-2)=\\
&=\Big\{-\sum_{i=0}^{n-1}2x_i3^{i+1},-2-\sum_{i=0}^{n-1}2x_i3^{i+1}:(x_i)_{i=0}^{n-1}\in\{0,1\}^n\Big\}=\\
&=\Big\{-\sum_{i=1}^{n}2x_i3^{i},-2-\sum_{i=1}^{n}2x_i3^{i}:(x_i)_{i=1}^{n}\in\{0,1\}^n\Big\}=\\
&=\Big\{-\sum_{i=0}^{n}2x_i3^{i}:(x_i)_{i=0}^{n}\in\{0,1\}^{n+1}\Big\}.
\end{aligned}
$$
Consequently, the orbit $\Phi^\w(0)$ of zero coincides with the set $-M$. By analogy we can prove that $\Phi^\w(1)=1+M$. So, $\Fractal[\Phi]=\Phi^\w(\{0,1\})=\Phi^\w(0)\cup\Phi^\w(1)=(-M)\cup(M+1)$.
\end{proof}

\begin{remark} The coarse characterization of the macro-Cantor set $M$ given in \cite{BZ} implies that the macro-fractal $\Fractal[\Phi]$ of the multi-function $\Phi:x\mapsto\{3x,3x-2\}$ is coarsely equivalent to $M$, so it is legal to call the macro-fractal $\Fractal[\Phi]$ a macro-Cantor set.
\end{remark}

\section{Algorithms of drawing fixed fractals}\label{s:algo}

In this section we describe some algorithms for drawing fixed fractals of contracting or $*$-repelling multi-functions. In fact, for contracting multi-functions such algorithms are well-developed and we refer the reader to \cite{Barnsley}, \cite{Crownover} for details. So, here we concentrate at the problem of drawing fixed fractals of $*$-repelling multi-functions.

From now on we assume that $\Phi:X\setmap X$ is a $*$-repelling finite-valued function on a complete metric space $(X,d)$ whose inverse $\Phi^{-1}$ is finite-valued and contracting with Lipschitz constant $\lambda=\Lip(\Phi^{-1})<1$. Then Theorems~\ref{t:micro} and Proposition~\ref{p2:rep} imply that:
\begin{enumerate}
\item the set $\Fixx[\Phi]=\Fixx[\Phi^{-1}]$ is finite and not empty;
\item the micro-fractal $\Fractal[\Phi^{-1}]$ is compact and not empty;
\item the macro-fractal $\Fractal[\Phi]$ is discrete in $X$ and coincides with the orbit $\Phi^\w(\Fixx[\Phi])$ of the finite set $\Fixx[\Phi]$;
\item there is $k\in\IN$ such that the finite set $\Phi^{k}(\Fixx[\Phi])\setminus\Phi^{k-1}(\Fixx[\Phi])$ lies on the positive distance
    $$\delta=d(\Phi^{k+1}(\Fixx[\Phi])\setminus\Phi^k(\Fixx[\Phi]),\Fractal[\Phi^{-1}])>0$$from the compact set $\Fractal[\Phi^{-1}]$.
\end{enumerate}

Now we describe some algorithms for drawing the macro-fractal $\Fractal[\Phi]$ and show the corresponding pictures for simple expanding maps $\Phi$.

\subsection{Cut-and-Zoom Algorithm} The {\em Cut-and-Zoom Algorithm}, draws the intersection $\Fractal[\Phi]\cap E$ of the macro-fractal $\Fractal[\Phi]$ with a ``large'' compact subset $E\subset X$ (which models a computer screen).

Given a compact subset $E\subset X$, we find $n\in\w$ such that
$$E\subset \{x\in X:d(x,\Fractal[\Phi^{-1}])<\delta/\lambda^n\}$$and draw the set $A=E\cap\Phi^{k+n}(\Fixx[\Phi])$. By Theorem~\ref{t:macro}, the set $A$ coincides with the intersection $E\cap\Fractal[\Phi]$ of the fractal $\Fractal[\Phi]$ with the ``screen'' $E$.

For drawing the set $\Phi^{k+n}(\Fixx[\Phi])$ we can use the following recurrent procedure.

\begin{lemma}\label{l:a} For every $m\in\w$ the set  $\Phi^{m}(\Fixx[\Phi])$ coincides with the union $\bigcup_{i=0}^{m}A_i$ of the sequence $(A_i)_{i=0}^{m}$ defined recursively as $A_0=\Fixx[\Phi]$ and $A_{i+1}=\Phi(A_i)\setminus A_i$ for $i<m$.
\end{lemma}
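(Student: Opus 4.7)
The plan is to prove the lemma by induction on $m$, but with a slightly strengthened hypothesis, since the naive statement $\bigcup_{i=0}^m A_i = \Phi^m(\Fixx[\Phi])$ alone does not seem to propagate cleanly through the inductive step. Let me write $B_m := \Phi^m(\Fixx[\Phi])$. Since $\Fixx[\Phi]\subset \Phi(\Fixx[\Phi])$, the sequence $(B_m)$ is increasing, so $B_m = \bigcup_{i=0}^m B_i$, and moreover $B_{m+1}=\Phi(B_m) \supset B_m$, i.e.\ the ``new'' points at step $m+1$ form the set $B_{m+1}\setminus B_m\subset \Phi(B_m)$.

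The strengthened inductive hypothesis I propose is: for every $m\in\w$,
\[
\bigcup_{i=0}^m A_i = B_m \quad\text{and}\quad B_{m+1}\setminus B_m\subset \Phi(A_m).
\]
The base $m=0$ is immediate since $A_0=\Fixx[\Phi]=B_0$ and $B_1\setminus B_0\subset \Phi(B_0)=\Phi(A_0)$.

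For the inductive step, assuming the hypothesis at level $m$, I would first verify $\bigcup_{i=0}^{m+1}A_i=B_{m+1}$. The inclusion ``$\subset$'' uses $A_m\subset B_m$ (a consequence of the first part of the hypothesis), which gives $A_{m+1}=\Phi(A_m)\setminus A_m\subset\Phi(B_m)=B_{m+1}$. For ``$\supset$'', any $x\in B_{m+1}\setminus B_m$ lies in $\Phi(A_m)$ by the second part of the hypothesis, and since $x\notin B_m\supset A_m$, actually $x\in\Phi(A_m)\setminus A_m=A_{m+1}$; combined with $B_m=\bigcup_{i=0}^m A_i\subset\bigcup_{i=0}^{m+1}A_i$, this gives $B_{m+1}\subset\bigcup_{i=0}^{m+1}A_i$.

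It remains to re-establish the second clause at level $m+1$, namely $B_{m+2}\setminus B_{m+1}\subset \Phi(A_{m+1})$. Using $B_{m+1}=\bigcup_{i=0}^{m+1}A_i$ just proved, one has $B_{m+2}=\Phi(B_{m+1})=\bigcup_{i=0}^{m+1}\Phi(A_i)$; but for $i\le m$, $\Phi(A_i)\subset\Phi(B_m)=B_{m+1}$, so the only contributions to $B_{m+2}\setminus B_{m+1}$ come from $\Phi(A_{m+1})$. This closes the induction, and the first clause at level $m$ yields the statement of the lemma. The only non-routine ingredient is recognizing that the right invariant to carry through the induction is the joint statement above — the bare equality $\bigcup_{i=0}^m A_i=B_m$ is too weak because one cannot recover where the freshly-appearing points of $B_{m+1}$ come from without tracking them in $\Phi(A_m)$.
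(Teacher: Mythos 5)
Your proof is correct. It follows the same inductive skeleton as the paper's, but with different bookkeeping: you carry the auxiliary invariant $\Phi^{m+1}(\Fixx[\Phi])\setminus\Phi^{m}(\Fixx[\Phi])\subset\Phi(A_m)$ explicitly through the induction, whereas the paper keeps only the bare equality $\Phi^m(\Fixx[\Phi])=\bigcup_{i=0}^m A_i$ and, when a point $x\in\Phi^{m+1}(\Fixx[\Phi])\setminus\Phi^m(\Fixx[\Phi])$ appears, picks a preimage $z\in\Phi^m(\Fixx[\Phi])$ with $x\in\Phi(z)$, notes that $z\notin\Phi^{m-1}(\Fixx[\Phi])$, and concludes $z\in\Phi^m(\Fixx[\Phi])\setminus\Phi^{m-1}(\Fixx[\Phi])\subset A_m$ from the equality at levels $m$ and $m-1$. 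For this reason your closing remark that the bare equality is ``too weak'' is not quite accurate: it does propagate, provided one also uses it one level down (equivalently, runs the induction on the statement for all $j\le m$), which is precisely how the paper recovers where the freshly appearing points come from. Your version makes that hidden dependence explicit and self-propagating, at the cost of a second clause to verify at each step; the paper's version is shorter but silently invokes the hypothesis at the previous level. Both arguments are complete.
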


\begin{proof} The lemma is trivially true for $m=0$ and $m=1$. Assume that the equality
$\Phi^m(\Fixx[\Phi])=\bigcup_{i=0}^m A_i$ is true for some $m\ge 1$.
Then $A_{m+1}=\Phi(A_m)\setminus A_m\subset\Phi(\Phi^m(\Fixx[\Phi]))=\Phi^{m+1}(\Fixx[\Phi])$. To prove the reverse inclusion, fix any point  $x\in\Phi^{m+1}(\Fixx[\Phi])$. If $x\in \Phi^m(\Fixx[\Phi])$, then $$x\in \Phi^m(\Fixx[\Phi])=\bigcup_{i=0}^mA_i\subset\bigcup_{i=0}^{m+1}A_i$$by the inductive assumption.
Now assume that $x\in \Phi^{m+1}(\Fixx[\Phi])\setminus\Phi^{m}(\Fixx[\Phi])$ and choose a point $z\in\Phi^m(\Fixx[\Phi])$ with $x\in\Phi(z)$. It follows from $x\notin\Phi^m(\Fixx[\Phi])$ that $z\notin\Phi^{m-1}(\Fixx[\Phi])$ and $x\notin A_m$. By the inductive assumption, $$z\in \Phi^m(\Fixx[\Phi])\setminus\Phi^{m-1}(\Fixx[\Phi])= \Big(\bigcup_{i=0}^{m}A_i\Big)\setminus\Big(\bigcup_{i=0}^{m-1}A_i\Big)\subset A_m.$$ Consequently, $x\in \Phi(z)\setminus A_m\subset\Phi(A_m)\setminus A_m=A_{m+1}$ and hence $x\in\bigcup_{i=0}^{m+1}A_i$.
\end{proof}

\begin{remark} In fact, even with help of the recursive procedure suggested by Lemma~\ref{l:a}, drawing the set $\Phi^n(\Fixx[\Phi])$ for large $n$ is not an easy task. For a typical finite-valued function $\Phi$ the cardinality of the set $\Phi^n(\Fixx[\Phi])$ growth exponentially with growth of $n$. As a result, for relatively large $n$ the set $\Phi^n(\Fixx[\Phi])$ cannot be directly drawn on computer because of technical restrictions (the bounded amount of computer memory). This problem can be (partially) resolved by application of probability algorithm for drawing the set $\Phi^n(\Fixx[\Phi])$.

According to this algorithm one draws a huge number of random sequences $(x_i)_{i=0}^n$ where $x_0$ is a random point of the set $\Fixx[\Phi]$ and for every $i<n$, $x_{i+1}$ is a random point of the set $\Phi(x_i)$. Then with high probability (which can be effectively estimated) the union of such sequences will coincide with the finite set $\Phi^n(\Fixx[\Phi])$. With even higher probability this union will (visually) approximate the set $\Phi^n(\Fixx[\Phi])$.

In fact, all images of macro-fractals presented in this paper (except for Cut-and-Zoom images) were generated by a combined algorithm, which draws (with help of random algorithm) the orbits $\Phi^m(x)$ of points $x\in \Phi^n(\Fixx[\Phi])$ for relatively small $m$ and $n$. A program for drawing certain macro-fractals (with variable parameters) is available at http://testimages.somee.com
and we encourage the reader to experiment and produce his/her own macro-fractal images using this application.
\end{remark}

\subsection{Coloring macro-fractals} Observe that for a multi-valued function $\Phi:X\setmap X$ with finite set $\Fixx[\Phi]=\{x\in X:x\in\Phi(x)\}$ of fixed points, the fixed fractal $\Fractal[\Phi]=\bar\Phi^\w(\Fixx[\Phi])$ can be decomposed into a finite union
$$\Fractal[\Phi]=\bigcup_{x\in\Fixx[\Phi]}\bar\Phi^\w(x)$$
of closures of the orbit of individual fixed points $x\in\Fixx[\Phi]$. To see this structure of the fixed fractal in the following pictures of macro-fractals we shall color the orbits of fixed points by different colors. For micro-fractals such coloring is inexpedient as the orbit of each fixed point is dense in the micro-fractal.

Also, for some macro-fractals their black-and-white pictures show an interesting interference effects which disappear on their color counterparts; see Figures~\ref{macro-carpet-bw} and \ref{macro-carpet} or \ref{macro-sn7-bw} and \ref{macro-sn7}.
%\pagebreak

\begin{example}[The dual pair of fractal crosses] Consider the multi-function
$\Phi:\IC\to\IC$ assigning to each point $z\in\IC$ of the complex plane the 5-element subset $\Phi(z)=3z-2F$ where $F=\{0,1,-1,i,-1\}\subset\IC$. It is clear that the inverse multi-function $\Phi^{-1}:\IC\setmap\IC$, $\Phi^{-1}:w\mapsto \frac13w+\frac23F$, is contracting with Lipschitz  constant $\lambda=\Lip(\Phi^{-1})=\frac13<1$ and $\Fixx[\Phi]=\Fixx[\Phi^{-1}]=F$.

So, the fixed fractal $\Fractal[\Phi^{-1}]$ of the contracting multi-function $\Phi^{-1}$ is a micro-fractal called the {\em Micro-Fractal Cross}, drawn in Figure~\ref{micro-cross}.

\begin{figure}[h!]
  \begin{center}
    \includegraphics[width=0.5\linewidth, keepaspectratio]{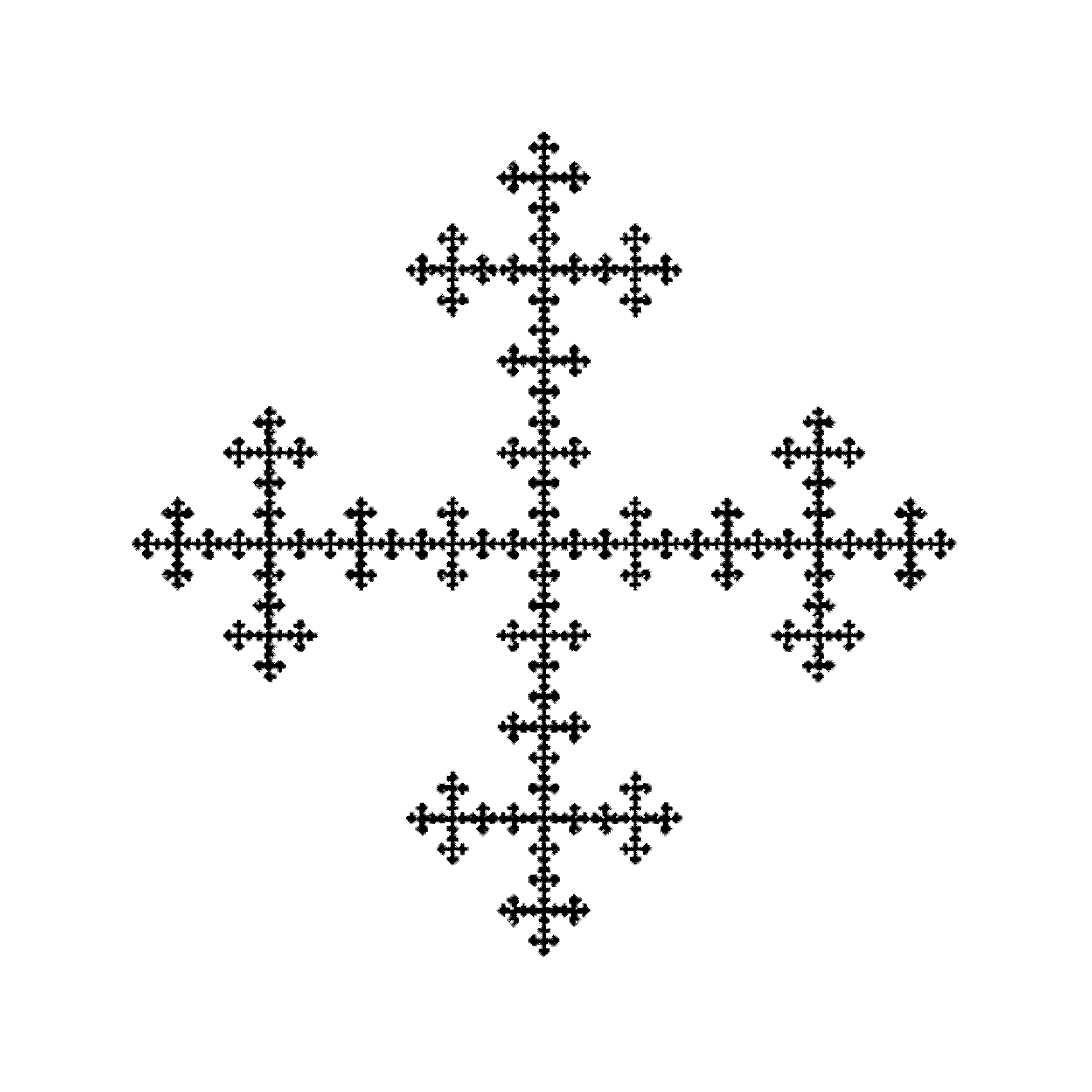}
   \end{center}
   \caption{Micro-Fractal Cross.}\label{micro-cross}
\end{figure}

\medskip

It follows from the inclusions $F\subset\IZ+i\hskip1pt\IZ$ and $\Phi(\IZ+i\hskip1pt\IZ)\subset\IZ+i\hskip1pt\IZ$ that the fixed fractal $\Fractal[\Phi]$ is discrete (being a subspace of the discrete subspace $\IZ+i\hskip1pt\IZ$ of the complex plane).
By Proposition~\ref{p2:rep}, the multi-function $\Phi$ is $*$-repelling. Consequently, its fixed fractal $\Fractal[\Phi]$ is a macro-fractal, which will be called the {\em Macro-Fractal Cross}. Observe that the 20-element set $\Phi(\Fixx[\Phi])\setminus\Fixx[\Phi]$ lies on the distance
$$\delta=d(\Phi(\Fixx[\Phi])\setminus\Fixx[\Phi],\Fractal[\Phi^{-1}])=1$$
from the Micro-Fractal Cross $\Fractal[\Phi^{-1}]$.
\pagebreak

Now for every $n\in\IN$ consider the neighborhood
$$E_n=\{x\in X:d(x,\Fractal[\Phi^{-1}])<3^n\}$$ of the micro-fractal $\Fractal[\Phi^{-1}]$. By Theorem~\ref{t:macro}, the intersection $E_n\cap\Fractal[\Phi]$ coincides with the set $E_n\cap\Phi^{n+1}(\Fixx[\Phi])$, which contains $\le 5^{n+1}=|\Phi^{n+1}(\Fixx[\Phi])|$ points.
\smallskip

Figure~\ref{macro-cross} shows the piece $E_n\cap\Fractal[\Phi]$ of the Macro-fractal Cross $\Fractal[\Phi]$ for $n=9$.

\begin{figure}[h!]
  \begin{center}
    \includegraphics[width=0.9\linewidth, keepaspectratio]{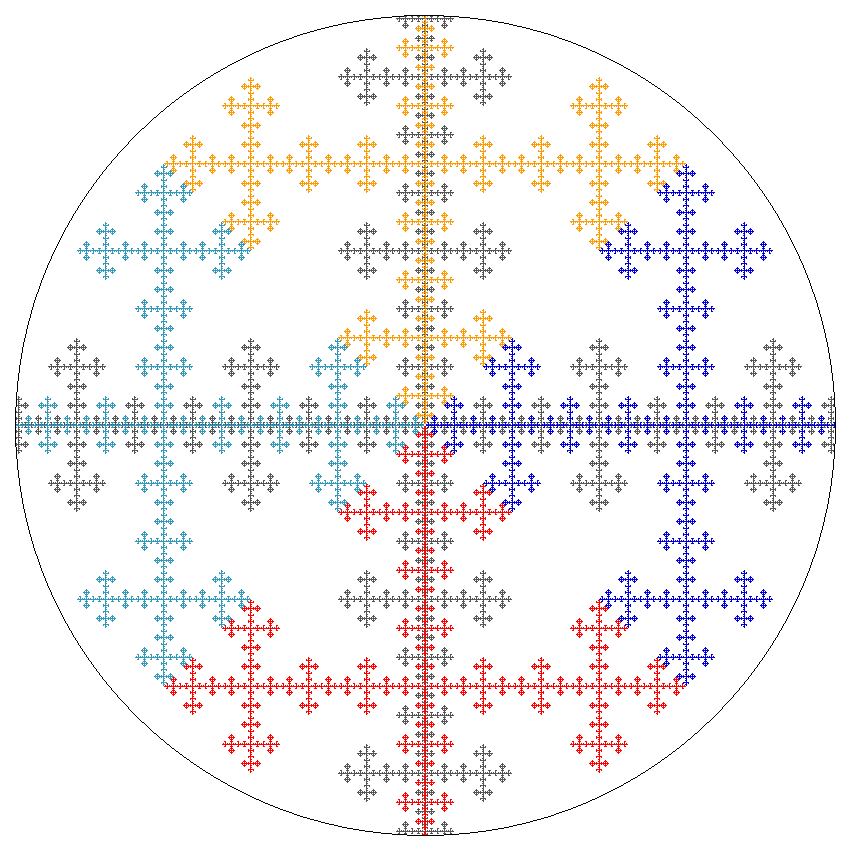}
   \end{center}
   \caption{The Macro-fractal Cross. Cut-and-Zoom image.}\label{macro-cross}
 \end{figure}
\end{example}
\pagebreak

\subsection{Non-linear images of macro-fractals} The problem with the Cut-and-Zoom Algorithm is that it shows only a part of the macro-fractal. To see the whole fractal at once, we can apply a suitable transformation to the ambient space $X$  in order to fit it into the computer screen.

To formalize this approach, let us fix some continuous map $i:X\to E$ of the complete metric space $X$ to a compact metric space $(E,\rho)$ called the {\em screen}. On the screen $E$ we would like to see the image $i(\Fractal[\Phi])$ of the macro-fractal $\Fractal[\Phi]$. The metric $\rho$ of the screen $E$ induces the Hausdorff distance $\rho_H$ on the power-set $2^E$ of all subsets of $E$.

\begin{proposition}\label{l:aa} For every $\e>0$ there is $N\in\IN$ such that
$$\rho_H\big(i(\Fractal[\Phi]),i(\Phi^n(\Fixx[\Phi])\big)<\e$$for all $n\ge N$.
\end{proposition}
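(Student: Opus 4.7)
The plan is to bound the two sides of the Hausdorff distance separately and to exploit compactness of the screen $E$ through total boundedness. Write $A_n=i(\Phi^n(\Fixx[\Phi]))$ and $A=i(\Fractal[\Phi])$ throughout. The sequence $(A_n)_{n\in\w}$ is increasing, because $(\Phi^n(\Fixx[\Phi]))_{n\in\w}$ is, and $A_n\subset A$ since $\Phi^n(\Fixx[\Phi])\subset\Fractal[\Phi]$. Consequently $\sup_{b\in A_n}\rho(b,A)=0$, so the real task reduces to producing an $N$ with $\sup_{a\in A}\rho(a,A_n)<\e$ for every $n\ge N$.

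The key preliminary observation is the containment $A\subset\overline{\bigcup_{n\in\w}A_n}$ (closure taken in $E$). This is immediate from continuity of $i$: for $B=\Phi^\w(\Fixx[\Phi])$ one has $i(\bar B)\subset\overline{i(B)}$, and the left-hand side is exactly $A$ while the right-hand side is $\overline{\bigcup_n A_n}$.

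With this in hand I would invoke compactness of $E$: the subset $A\subset E$ is totally bounded, so for the given $\e>0$ I can cover $A$ by finitely many open $\rho$-balls of radius $\e/2$ with centers $x_1,\dots,x_m\in A$. By the containment above, each $x_k$ lies in $\overline{\bigcup_n A_n}$, so I can choose $y_k\in A_{n_k}$ with $\rho(x_k,y_k)<\e/2$, and set $N=\max_{1\le k\le m}n_k$. Monotonicity of $(A_n)$ puts $\{y_1,\dots,y_m\}\subset A_n$ for every $n\ge N$. Then, given any $a\in A$, pick $k$ with $\rho(a,x_k)<\e/2$; the triangle inequality yields $\rho(a,A_n)\le\rho(a,y_k)<\e$, and combined with the vanishing of the other side this gives $\rho_H(A,A_n)\le\e$ for all $n\ge N$.

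There is no genuine obstacle in this argument: the proof uses only continuity of $i$, monotonicity of the orbit, and total boundedness of the ambient compact space $E$, and in particular never invokes the contracting, expanding, or $*$-repelling structure of $\Phi$. This confirms that the proposition is really a general assertion about any continuous pushforward of the orbit sequence into a compact metric space, which is precisely what justifies the non-linear visualization scheme: no matter how the ambient space is re-parametrized into a bounded screen, a sufficiently late iterate $\Phi^n(\Fixx[\Phi])$ produces a faithful image of the full macro-fractal $\Fractal[\Phi]$ up to any prescribed Hausdorff tolerance.
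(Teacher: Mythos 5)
Your argument is correct and is essentially the paper's own proof: continuity of $i$ makes $i(\Phi^\w(\Fixx[\Phi]))$ dense in $i(\Fractal[\Phi])$, compactness of the screen $E$ yields a finite $\e$-net coming from the orbit's image, and finiteness of that net together with monotonicity of the sets $\Phi^n(\Fixx[\Phi])$ supplies the threshold $N$. The only cosmetic difference is that you split the Hausdorff distance into its two one-sided suprema and note that one vanishes because $i(\Phi^n(\Fixx[\Phi]))\subset i(\Fractal[\Phi])$, whereas the paper bounds $\rho_H$ directly from the sandwich $F\subset i(\Phi^n(\Fixx[\Phi]))\subset i(\Fractal[\Phi])$.
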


\begin{proof} By the continuity of the map $i:X\to E$, the set $i\big(\Phi^\w(\Fixx[\Phi])\big)$  is dense in the image $i(\bar\Phi^\w(\Fixx[\Phi]))=i(\Fractal[\Phi])$ of the macro-fractal $\Fractal[\Phi]$. Then using the compactness of the metric space $(E,\rho)$ we can find a finite subset $F\subset i\big(\Phi^\w(\Fixx[\Phi])\big)$ which is an $\e$-net for $i(\Fractal[\Phi])$, in the sense that for each point $x\in i\big(\Fractal[\Phi]\big)$ there is a point $y\in F$ with $\rho(x,y)<\e$. Then $\rho_H(F,i(\Fractal[\Phi])<\e$.

The finite set $F$ lies in the image $i(\Phi^N(\Fixx[\Phi]))$ for some $N\in\w$. Then for every $n\ge N$, the inclusions
$$F\subset i(\Phi^n(\Fixx[\Phi]))\subset i(\Fractal[\Phi])$$imply that $$\rho_H\big(i(\Phi^n(\Fixx[\Phi])),i(\Fractal[\Phi])\big)\le\rho_H(F,\Fractal[\Phi])<\e.$$
\end{proof}

According to Proposition~\ref{l:aa}, for a large $n$ the image $i(\Phi^n(\Fixx[\Phi]))$ of the set $\Phi^n(\Fixx[\Phi])$ approximates the image $i(\Fractal[\Phi])$ in the Hausdorff metric $\rho_H$, so it approximates the macro-fractal also in the visual sense.
In the next subsections we shall consider images of the macro-fractal $\Fractal[\Phi]$ on various screens $E$.

\subsection{Spherical images of macro-fractals}

Let us observe that Proposition~\ref{l:aa} says that the image $i(\Fractal[\Phi])$ can be approximated by the images  $i\big(\Phi^n(\Fixx[\Phi])\big)$ of the sets $\Phi^n(\Fixx[\Phi])$ for sufficiently large $n$ but does not say how large this $n$ should be taken to get an approximation with a given precision.

This can be easily done for the screen $E$ which coincides with the  one-point compactifications of $X$. In this case we should assume that $X$ is a separable locally compact space, so its one-point compactification $E=X\cup\{\infty\}$ is compact and metrizable.

We recall that the {\em one-point compactification} of a locally compact space $(X,\tau_X)$ is the union $\alpha X=X\cup\{\infty\}$ of $X$ and some point $\infty\notin X$, endowed with the topology $$\tau_{\alpha X}=\tau_X\cup\{\alpha X\setminus K:\mbox{$K$ is a compact subset of $X$}\}.$$

Since the space $E=\alpha X$ is metrizable, we can fix a metric $\rho$ generating the topology of the compact metrizable space $E$ and consider the identity embedding $i:X\to E$. This embedding $i:X\to E=\alpha X$ will be called the {\em spherical embedding} of $X$ into its {\em Riemann sphere} $E=\alpha X$.
\smallskip

\noindent {\bf Algorithm of spherical drawing macro-fractals.} To draw the spherical image $i(\Fractal[\Phi])$ of the macro-fractal $\Fractal[\Phi]$ with precision $\e>0$ we should make the following steps:
\begin{enumerate}
\item find a compact subset $K\subset X$ such that $X\setminus K$ lies in the $\e$-neighborhood $O_\e(\infty)$ of the compactifying point $\infty$;
\item find $m\in\w$ such that $K\subset \{x\in X:d(x,\Fractal[\Phi^{-1}])\le\delta/\lambda^m\}$;
\item draw the set $A=\{\infty\}\cup i(\Phi^{k+m}(\Fixx[\Phi]))$.
\end{enumerate}
This set $A$ will approximate the image $i(\Fractal[\Phi])$ with precision $\rho_E(A,i(\Fractal[\Phi]))<\e$. In this algorithm, $\lambda=\Lip(\Phi^{-1})<1$ and $\delta=d(\Phi^{k}(\Fixx[\Phi])\setminus\Phi^{k-1}(\Fixx[\Phi]),\Fractal[\Phi^{-1}])>0$.
\pagebreak

If $X$ is an Euclidean space $\IR^n$, then its one-point compactification $\alpha X$ can be identified with the $n$-dimensional sphere $S^{n}_r=\{\mathbf x\in\IR^{n+1}:\|\mathbf x\|=r\}$ of some radius $r$. The embedding of $X=\IR^n$ into the sphere $S^n_r$ is given by the inverse stereographic projection $s_\infty:\IR^n\to S^n_r$, which assigns to each point $\vec x=(x_0,x_1,\dots,x_{n-1})\in\IR^n$ the unique point $\vec y=(y_0,\dots,y_n)\in S^n$ that lies on the segment connecting the points $(0,\dots,0,r)$ and $(x_0,\dots,x_{n-1},0)$. The coordinates of the vector $\vec y$ can be calculated by the formula:
$$y_i=
\begin{cases}
\frac{2r^2}{r^2+\|\mathbf x\|^2}\cdot x_i&\mbox{if $i<n$}\\
\frac{\|\mathbf x\|^2-r^2}{\|\mathbf x\|^2+r^2}\cdot r&\mbox{if $i=n$}.
\end{cases}
$$
\smallskip

\begin{figure}[h!]
  \begin{center}
    \includegraphics[width=0.9\linewidth, keepaspectratio]{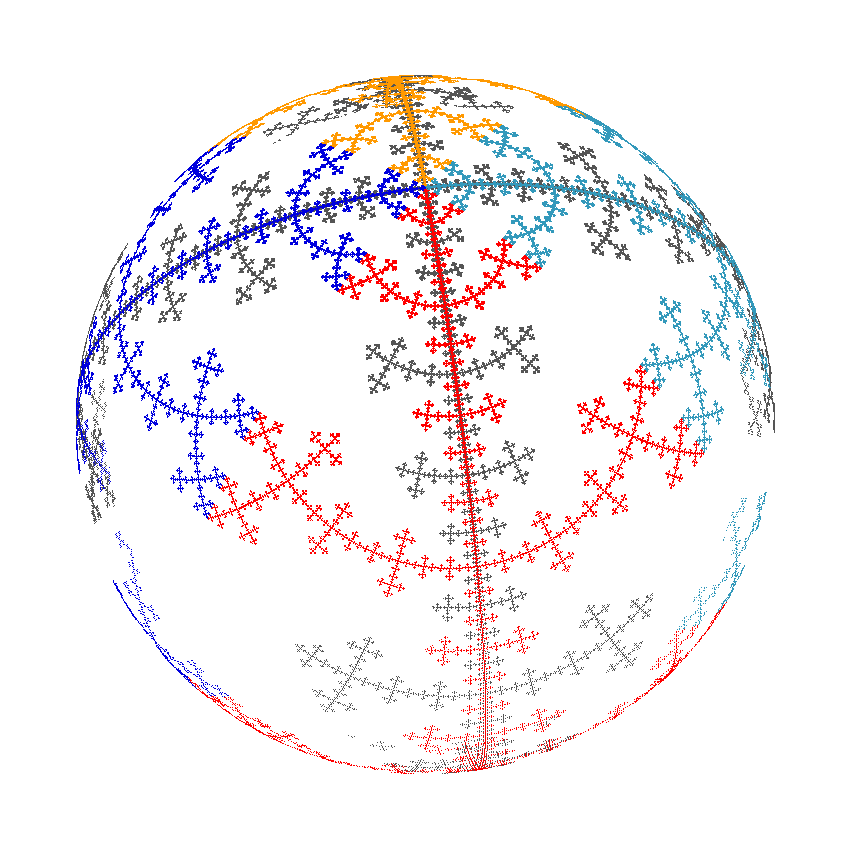}%
   \end{center}
   \caption{Macro-Fractal Cross. Spherical image.}\label{str-cross}
\end{figure}

\pagebreak

The spherical embedding allows us to look at the structure of the (image of the) macro-fractal at a neighborhood of infinity. Figure~\ref{sph-cross} shows the neighborhood of infinity on the spherical image of Macro-Fractal Cross.
\smallskip

\begin{figure}[h!]
  \begin{center}
    \includegraphics[width=0.7\linewidth, keepaspectratio]{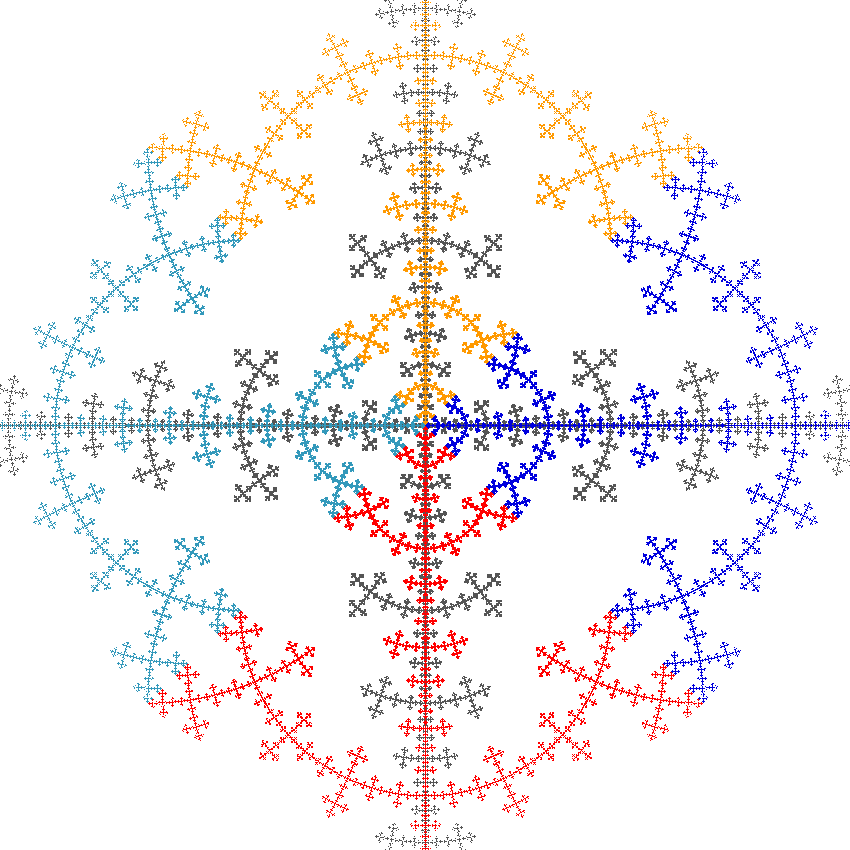}
   \end{center}
   \vskip15pt
   \caption{The spherical image of the Macro-Fractal Cross at a neighborhood of infinity.}\label{sph-cross}
  \end{figure}
\pagebreak

\subsection{Semi-spherical images of macro-fractals} In this section we consider another type on the transformation of the macro-fractals called the semi-spherical image. It is defined if $X=\IR^n$ is an Euclidean space. For this transformation, the screen $E$ is the lower semi-sphere of the sphere $S^n_r=\{\mathbf x\in\IR^{n+1}:\|\mathbf x\|=r\}$ of radius $r$ in the Euclidean space $\IR^{n+1}$. The semi-spherical reflection
$s_0:\IR^n\to S^n_r$ assigns to each point $\mathbf x=(x_0,\dots,x_{n-1})\in\IR^n$ the unique point $\mathbf y=(y_0,\dots,y_n)\in S^n_r$ on the segment connecting the center of the sphere with the point $(x_0,\dots,x_{n-1},-r)$. The coordinates of the vector $\mathbf y$ can be found by the formula:
$$
y_i=\begin{cases}\frac{rx_i}{\sqrt{r^2+\|\mathbf x\|^2}}&\mbox{if $i<n$}\\
-\frac{r^2}{\sqrt{r^2+\|\mathbf x\|^2}}&\mbox{if $i=n$}.
\end{cases}
$$

The image $s_0(\IR^n)$ of $\IR^n$ under the semi-spherical projection coincides with the lower semi-sphere $\{(y_0,\dots,y_n)\in S^n_r:y_n<0\}$. We can next project this semi-sphere on the disk $D_{2r}=\{\mathbf x\in\IR^n:\|x\|<2r\}$ using the stereographic projection $s_\infty^{-1}:S^n_r\setminus\{(0,\dots,0,r)\}\to\IR^n$.

The semi-spherical image of the Macro-Fractal Cross is drawn in Figures~\ref{semi-cross}.

\begin{figure}[h!]
  \begin{center}
    \includegraphics[width=0.90\linewidth, keepaspectratio]{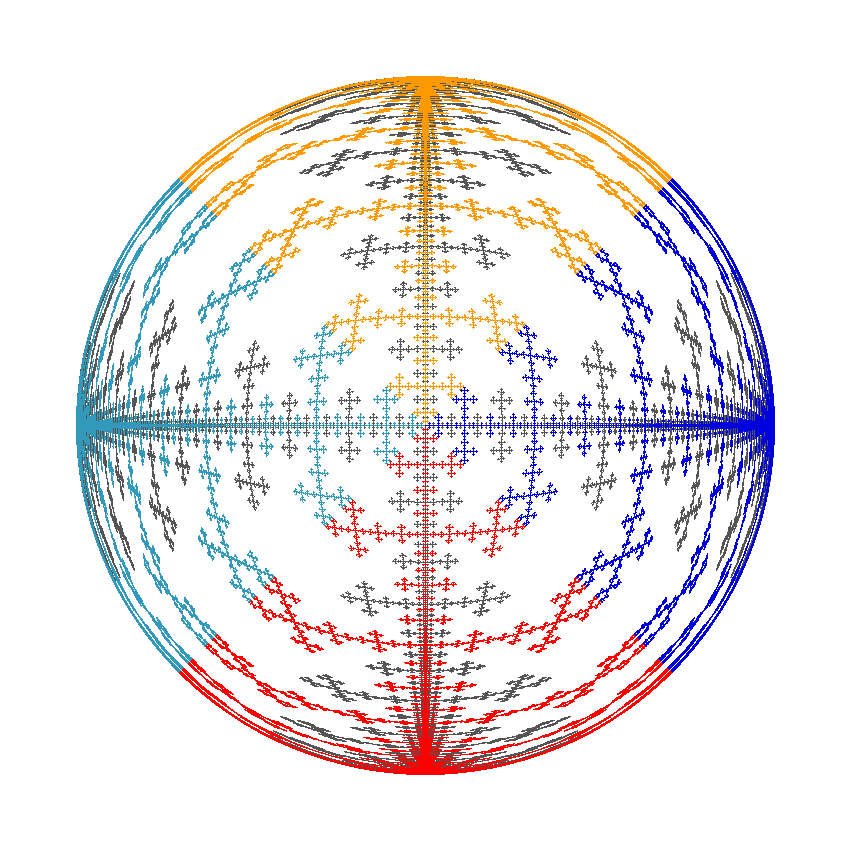}
   \end{center}
   \caption{The Macro-Fractal Cross. Semi-spherical image.}\label{semi-cross}
\end{figure}
\pagebreak

\subsection{Projective images of macro-fractals}

The projective image of a macro-fractal $\Fractal[\Phi]\subset X=\IR^n$ uses the projective space $\RP^n_r$ as a screen $E$. The projective space $\RP^n_r$ is the quotient space of the sphere $S^n_r$ of radius $r$ under the quotient map $q:S^n_r\to \RP^n_r$, which identifies opposite points of the sphere. The composition $\pi=q\circ s_0:\IR^n\to\RP^n_r$ is called the {\em projective reflection} of the space $X=\IR^n$. By Proposition~\ref{l:aa}, the image $\pi(\Fractal[\Phi])$ of the macro-fractal $\Fractal[\Phi]$ can be approximated by the images of the sets $\pi(\Phi^m(\Fixx[\Phi]))$ for large $m$.

In two-dimensional case the projective plane $\RP^2_r$ is locally diffeomorphic to the plane $\IR^2$. So, to see the behavior of the macro-fractal $\Fractal[\Phi]$ at infinity, we can apply a projective transformation to the plane such that the line of horizon transforms into a usual line on the plane and then the behavior of the macro-fractal at infinity becomes visible as the behavior of its projective image near the image of the horizon line.

The projective image of the Macro-fractal Cross is drawn in Figure~\ref{proj-cross}.
\vskip20pt

\begin{figure}[h!]
  \begin{center}
    \includegraphics[width=0.9\linewidth]{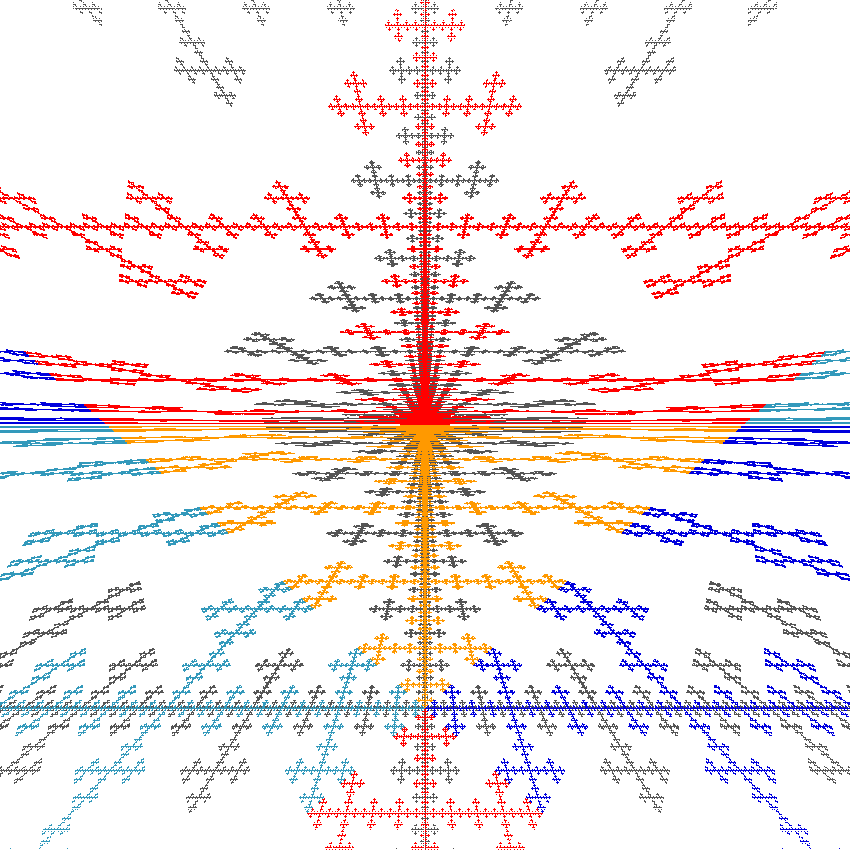}%
   \end{center}
   \vskip15pt
   \caption{The Macro-Fractal Cross. Projective image.}\label{proj-cross}
  \end{figure}

\pagebreak

\section{Acknowledgments}

The authors would like to express their sincere thanks to Ostap Chervak and Sasha Ravsky for many stimulating discussions on the theory of macro-fractals.
%\newpage

%\end{document}
\newpage

\section{Appendix: Gallery of Micro and Macro Fractals}

\subsection{A dual fractal to the Sierpi\'nski triangle}
Consider an equilateral triangle with vertices $c_1,c_2,c_3$ in the complex plane $\IC$. Let $F=\{c_1,c_2,c_3\}$ and consider the multi-valued function
$$\Phi:\IC\to\IC,\;\;\Phi:z\mapsto 2z-F$$with inverse multi-function
$$\Phi^{-1}:\IC\setmap\IC,\;\;\Phi^{-1}:z\mapsto\frac12+\frac12F,$$
which is contracting with Lipschitz constant $\Lip(\Phi^{-1})=\frac12$.
So, $\Fractal[\Phi]$ is a macro-fractal, dual to the micro-fractal $\Fractal[\Phi^{-1}]$, called the {\em Sierpi\'nski triangle}.
A Sierpi\'nski equilateral triangle is drawn on Figure~\ref{micro-serp}. Various images of its dual macro-fractal are shown on Figures~\ref{macro-serp}--\ref{proj-serp}.

\begin{figure}[h!]
  \begin{center}
    \includegraphics[width=0.6\linewidth, keepaspectratio]{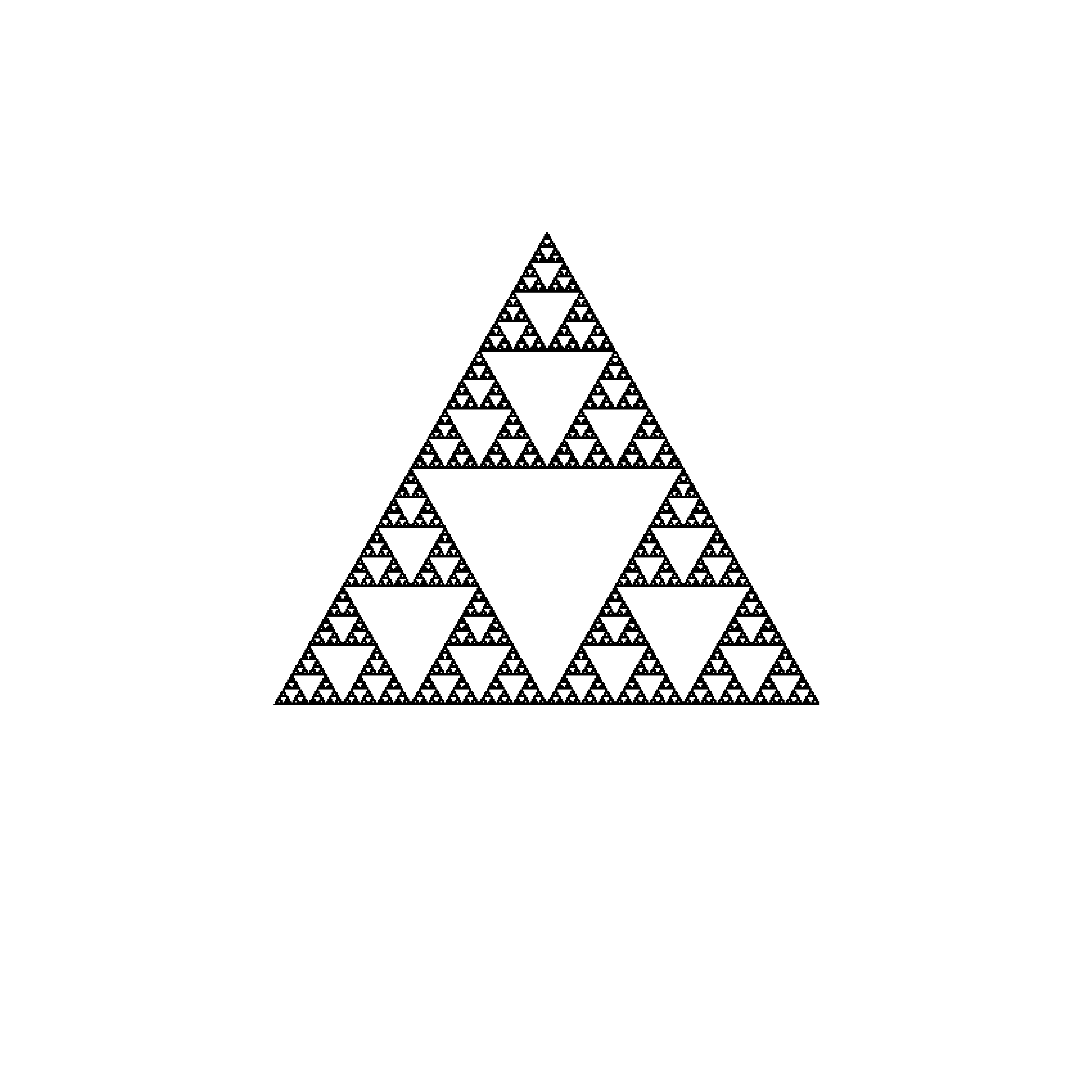}
      \end{center}{} \vskip-40pt
   \caption{Sierpi\'nski triangle.}\label{micro-serp}
\end{figure}

\phantom{mm}
\newpage

\hskip40pt
\phantom{mm}

\begin{figure}[h!]
  \begin{center}
    \includegraphics[width=0.9\linewidth, keepaspectratio]{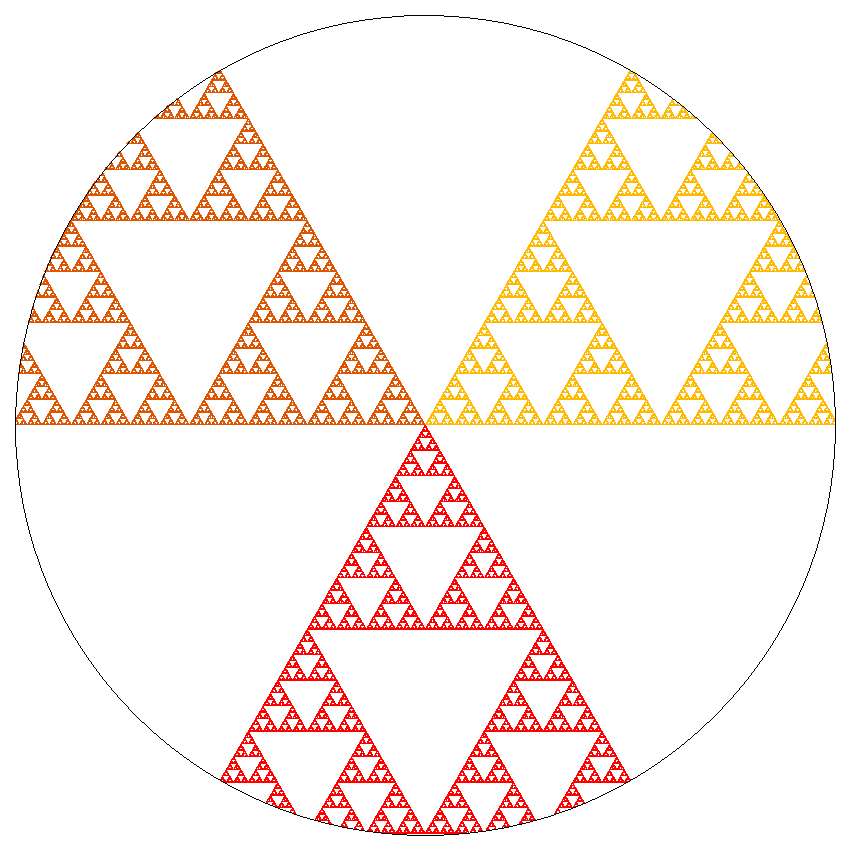}
   \end{center}  \vskip20pt
   \caption{A large piece of the dual fractal to the Sierpi\'nski triangle.}\label{macro-serp}
\end{figure}

\phantom{mm}
\pagebreak

\hskip40pt

\begin{figure}[h!]
  \begin{center}
    \includegraphics[width=1\linewidth, keepaspectratio]{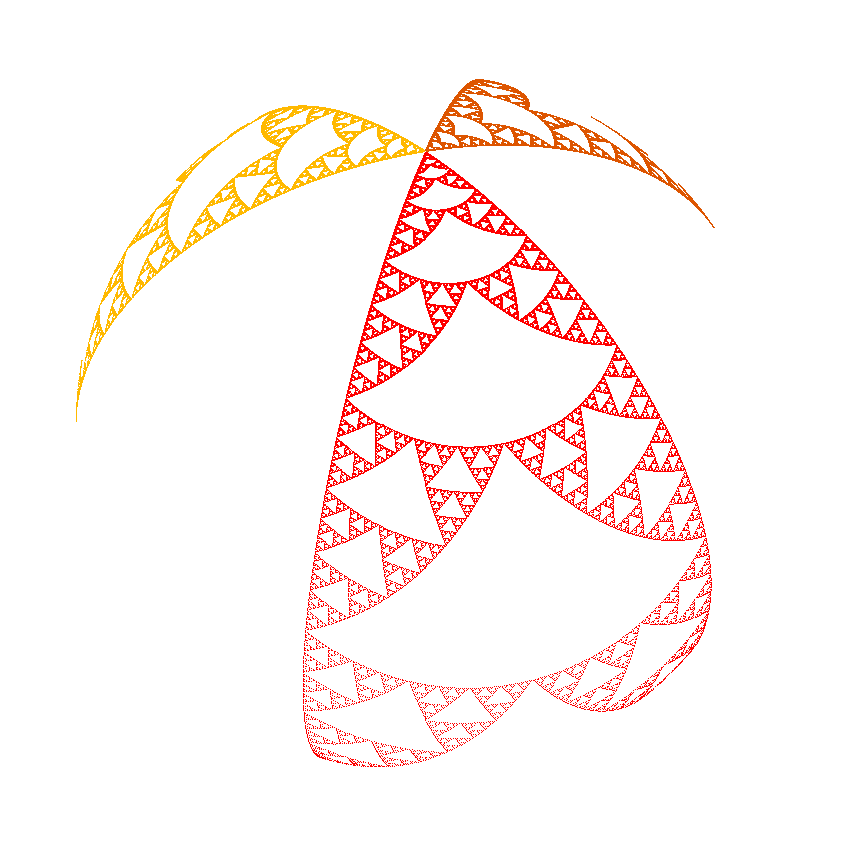}
   \end{center}
   \caption{The dual fractal to the Sierpi\'nski triangle.}{Spherical image.}\label{str-serp}
\end{figure}

\phantom{mm}
\pagebreak
\hskip40pt

\begin{figure}[h!]
  \begin{center}
    \includegraphics[width=0.9\linewidth, keepaspectratio]{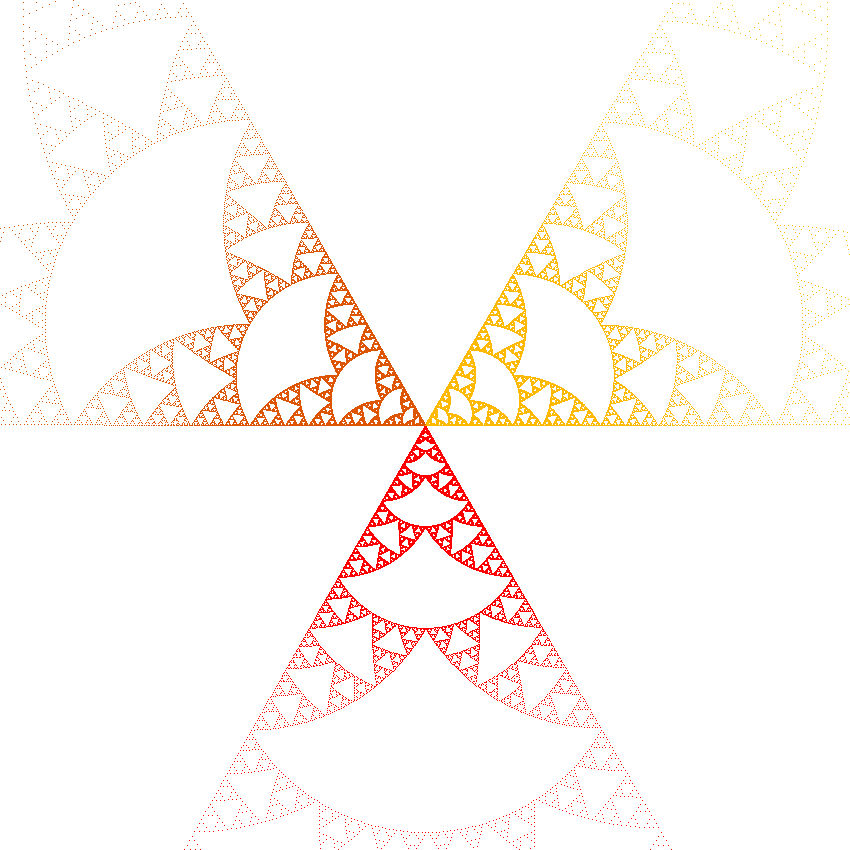}
  \end{center}  \vskip20pt
   \caption{The dual fractal to the Sierpi\'nski triangle.}{Spherical image at a neighborhood of infinity.}\label{sph-serp}
\end{figure}

\phantom{mm}
\pagebreak
\hskip40pt

\begin{figure}[h!]
  \begin{center}
    \includegraphics[width=1\linewidth]{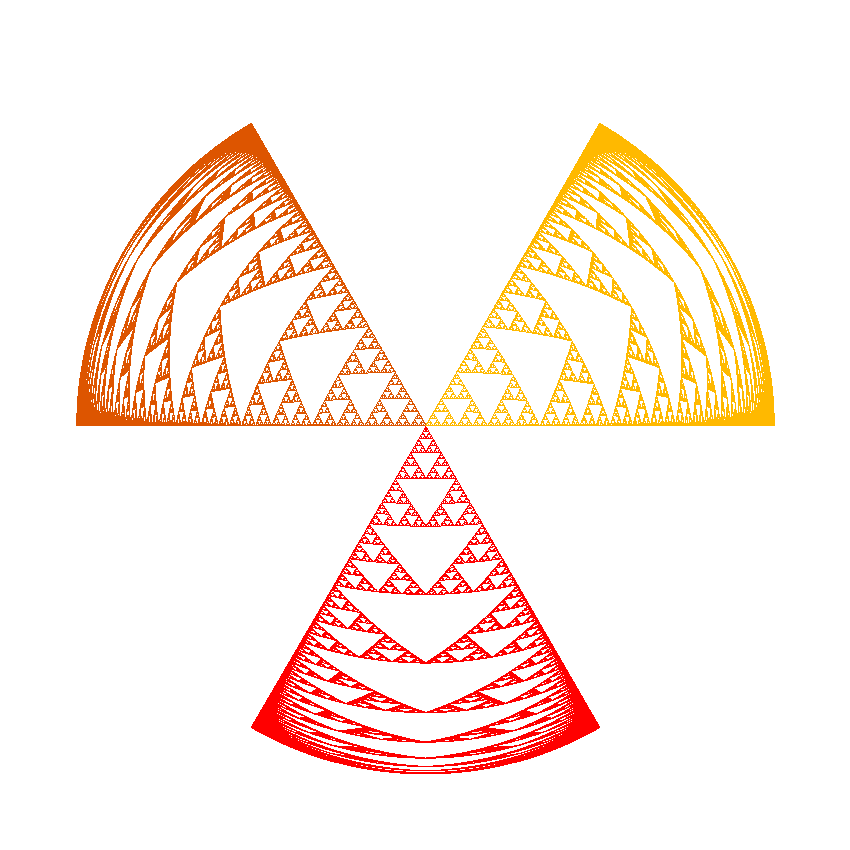}
   \end{center}
   \caption{The dual fractal to the Sierpi\'nski triangle.}{Semi-spherical image.}\label{semi-serp}
\end{figure}

\phantom{mm}
\pagebreak
\hskip40pt

\begin{figure}[h!]
  \begin{center}
    \includegraphics[width=0.9\linewidth]{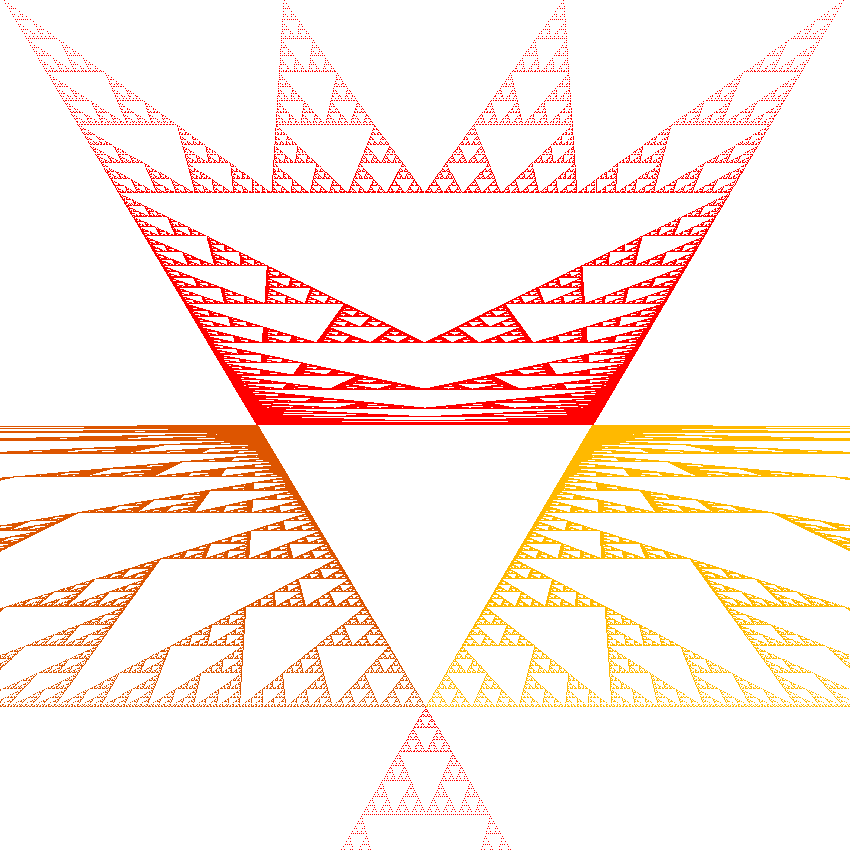}
      \end{center} \vskip20pt
   \caption{The dual fractal to the Sierpi\'nski triangle.}{Projective image.}\label{proj-serp}
\end{figure}\phantom{mm}
\pagebreak

\subsection{Dual Fractal to the Koch curve}
The Koch Curve is the micro-fractal $\Fractal[\Phi^{-1}]$ of the contracting multi-function $$\Phi^{-1}:\IC\setmap\IC,\;\;\Phi^{-1}:z\mapsto\big\{3+\tfrac1{\sqrt{3}}e^{-i\pi/3}(\bar z-3),-3+\tfrac1{\sqrt{3}}e^{i\pi/3}(\bar z+3)\big\},$$
which is inverse to the expanding multi-valued function
$$\Phi:\IC\setmap \IC,\;\;\Phi:z\mapsto\{3+\sqrt{3}e^{-i\pi/3}(\bar z-3),-3+\sqrt{3}e^{i\pi/3}(\bar z+3)\}$$
 where $\bar z=x-iy$ conjugated complex number to $z=x+iy$.
\bigskip

\begin{figure}[h]
  \begin{center}
    \includegraphics[width=0.9\linewidth, keepaspectratio]{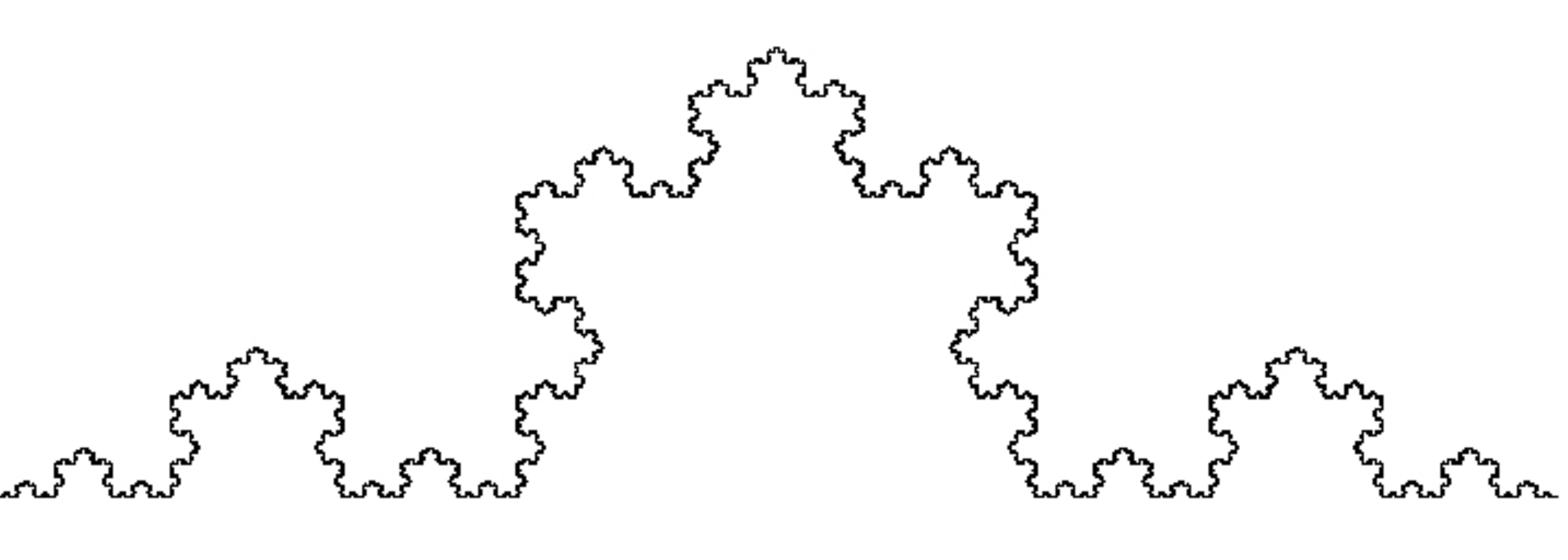}
   \end{center}
   \caption{Koch Curve.}\label{micro-koch}
\end{figure}
\vfill
\pagebreak
\phantom{m}
\vskip40pt

\begin{figure}[h!]
  \begin{center}
    \includegraphics[width=1\linewidth, keepaspectratio]{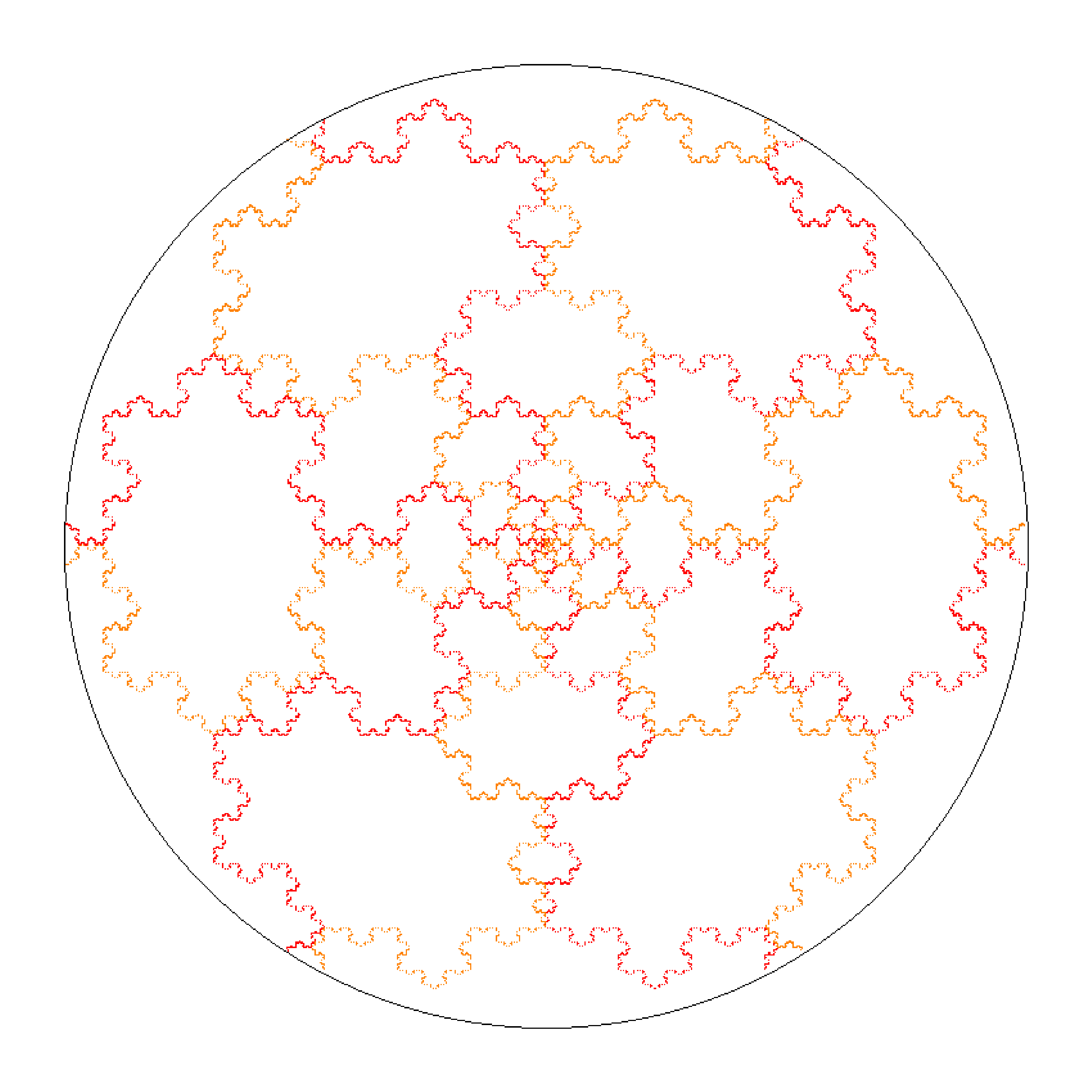}
   \end{center}
   \caption{The dual Fractal to the Koch Curve.}{Cut-and-Zoom image.}\label{macro-koch}
\end{figure}
\vfill

\pagebreak
\phantom{m}
\vskip40pt

\begin{figure}[h!]
  \begin{center}
    \includegraphics[width=1\linewidth,height=0.8\linewidth]{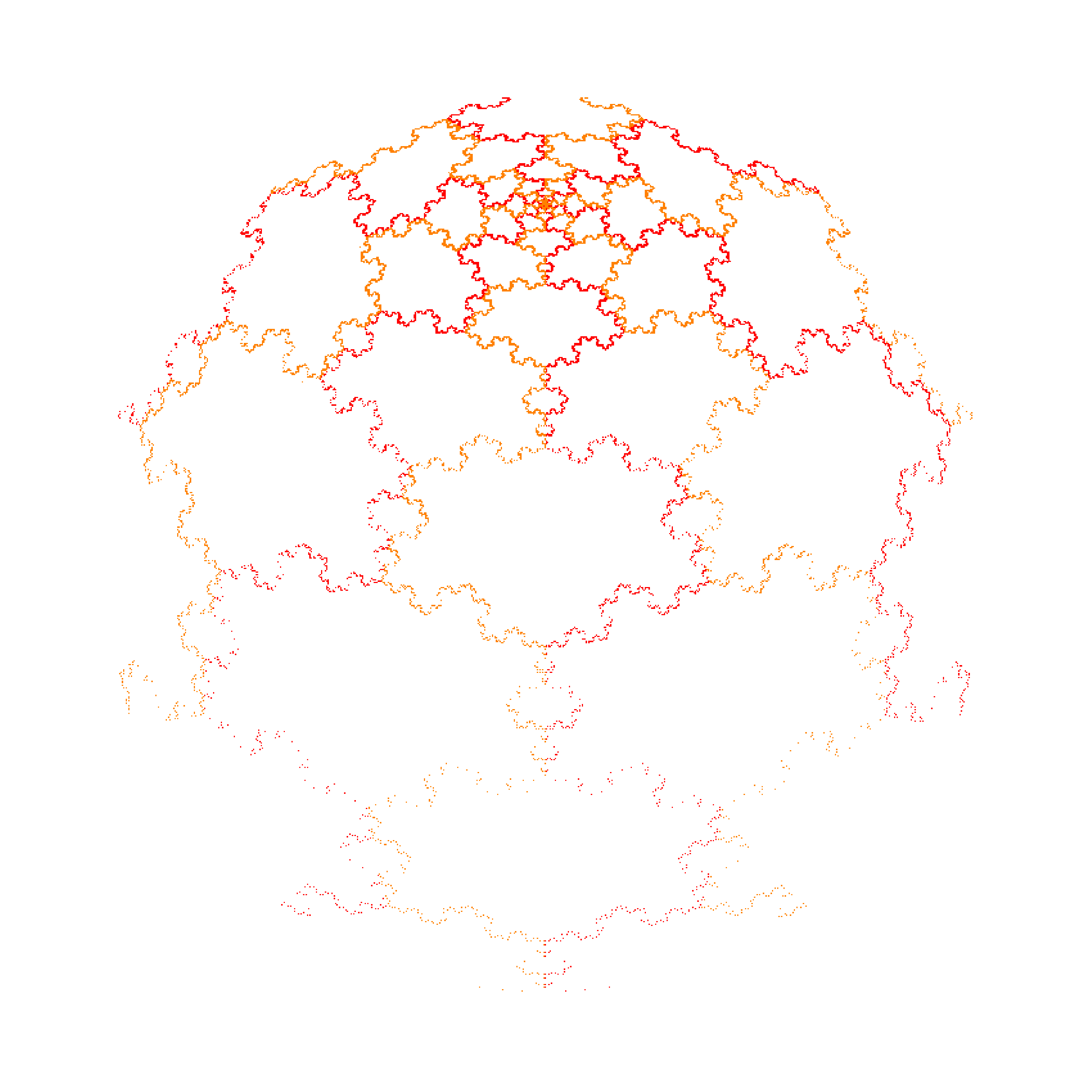}
      \end{center} \vskip20pt
   \caption{The dual fractal to the Koch Curve.}{Spherical image.}\label{str-koch}
\end{figure}
\vfill

\pagebreak
\phantom{m}
\vskip40pt

\begin{figure}[h!]
  \begin{center}
    \includegraphics[width=0.9\linewidth, keepaspectratio]{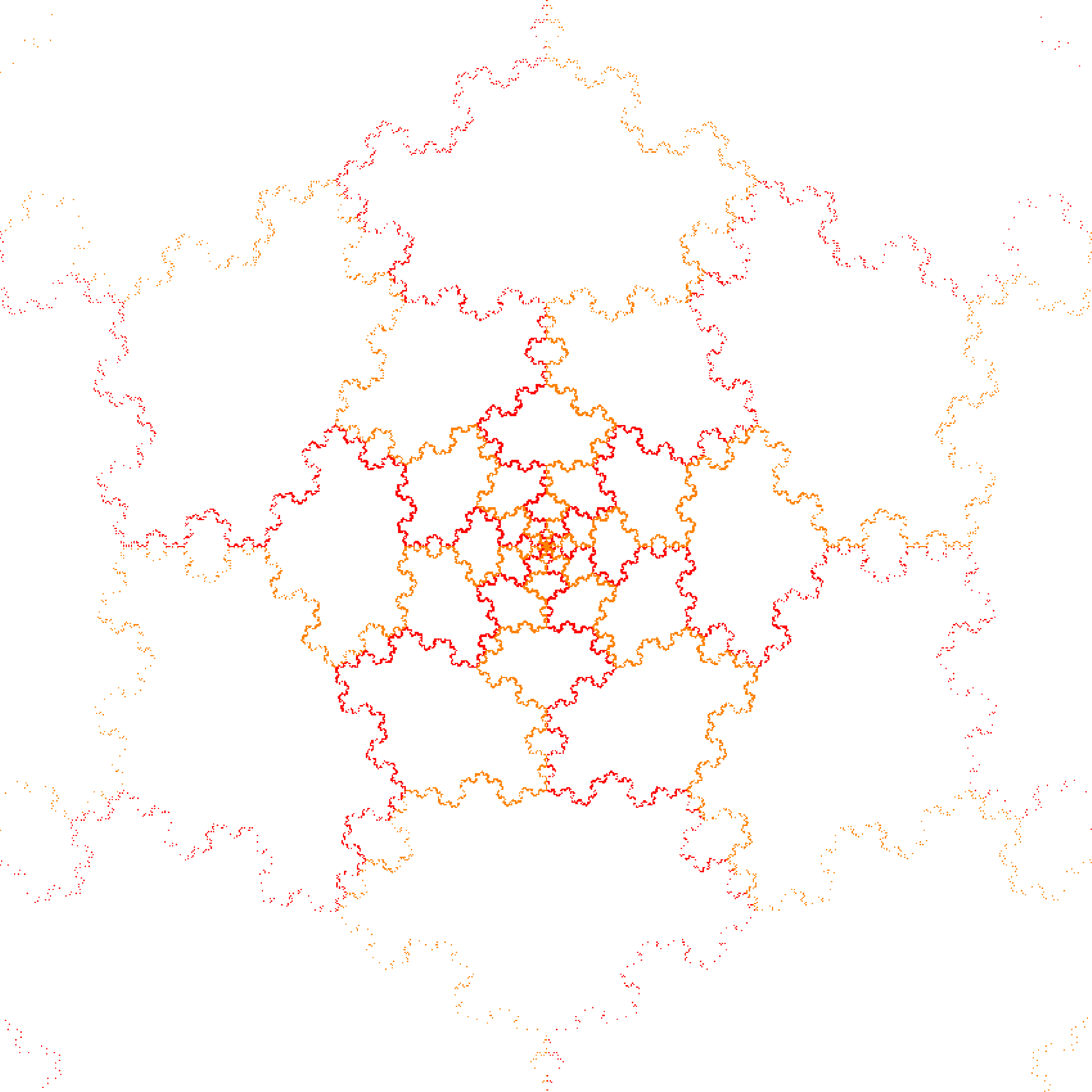}
       \end{center}\vskip20pt
   \caption{The dual fractal to the Koch curve.}{Spherical image at a neighborhood of the infinity.}\label{sph-koh}
\end{figure}
\vfill

\pagebreak
\phantom{m}
\vskip40pt

\begin{figure}[h!]
  \begin{center}
    \includegraphics[width=0.7\linewidth, keepaspectratio]{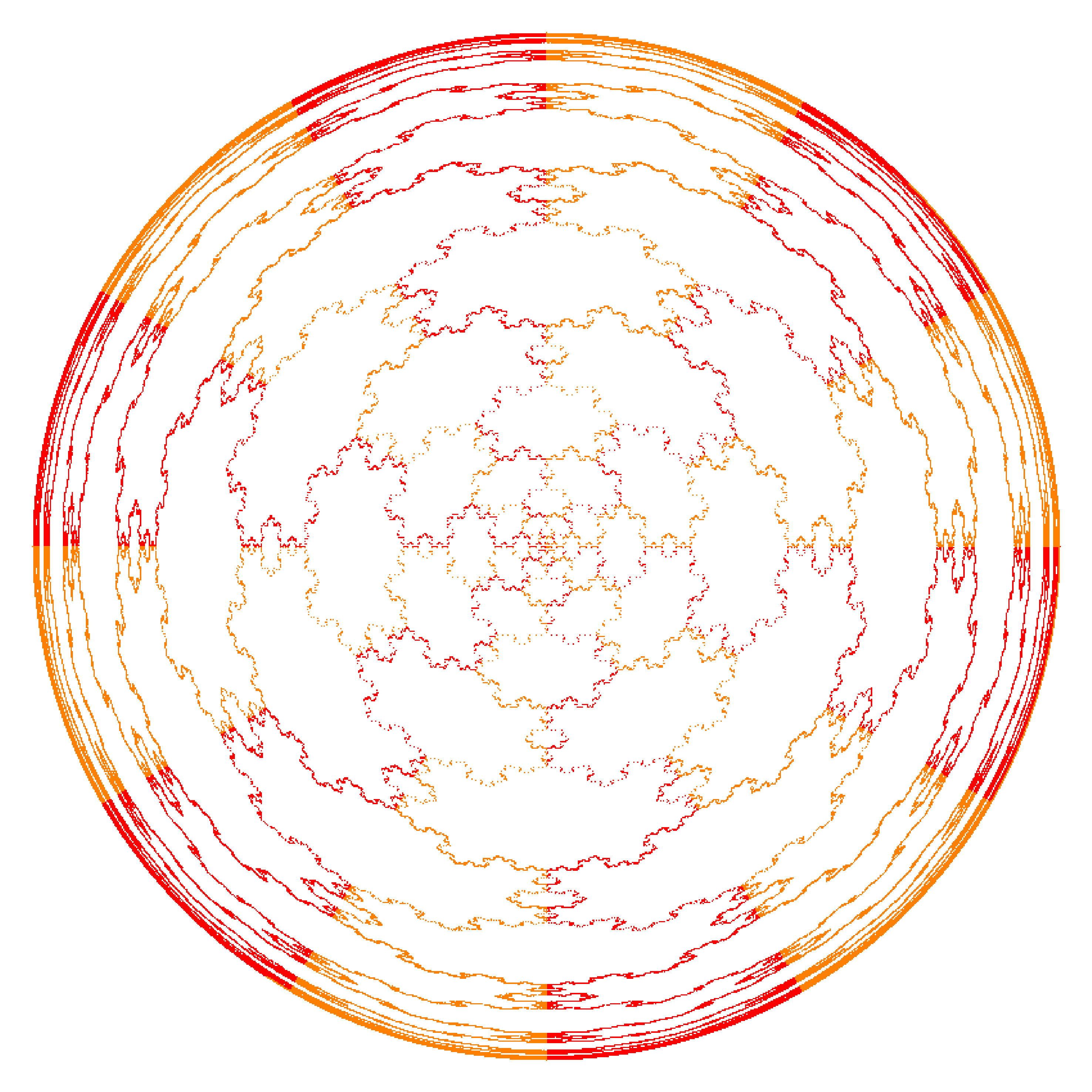}
     \end{center}\vskip30pt
   \caption{The dual fractal to the Koch Curve.}{Semi-spherical image.}\label{semi-koch}
\end{figure}
\vfill

\pagebreak
\phantom{m}
\vskip40pt

\begin{figure}[h!]
  \begin{center}
    \includegraphics[width=0.9\linewidth]{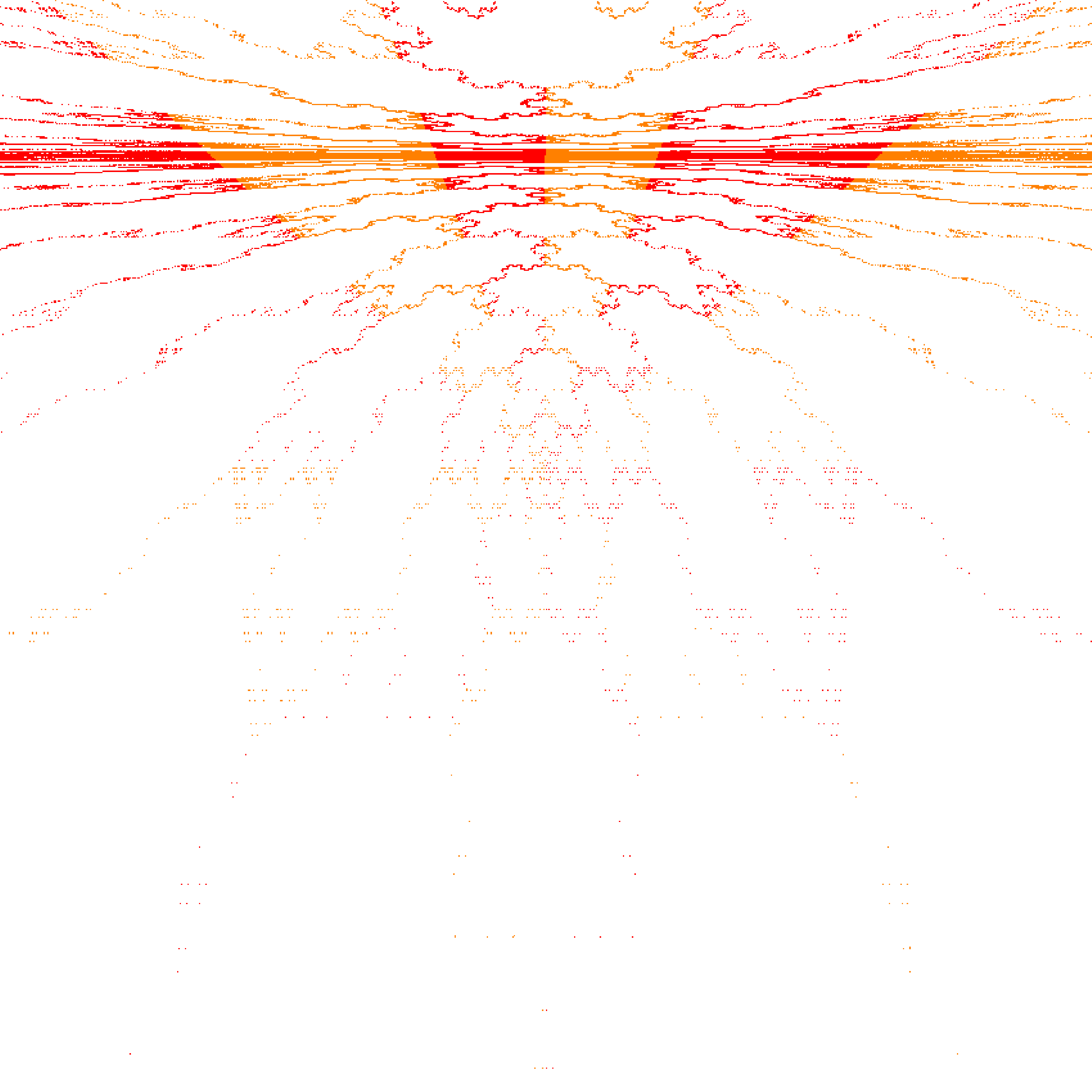}
   \end{center}
   \caption{The dual fractal to the Koch Curve.}{Projective image.}\label{proj-koch}
\end{figure}
\vfill

\pagebreak

\subsection{Micro- and Macro-Fractal Snowflakes with 6 fixed points}
Let $$F_6=\{e^{i\pi k/3}:0\le k<6\}$$be the 6-element set of vertices of a regular 6-gon on the complex plane. Consider the expanding multi-valued function
$$\Phi:\IC\setmap \IC,\;\;\Phi:z\mapsto 3z-2F_6$$whose inverse $$\Phi^{-1}:\IC\setmap\IC,\;\;\Phi^{-1}:z\mapsto \tfrac13z+\tfrac23F_6$$is a contracting multi-valued function. The dual fractals $\Fractal[\Phi]$ and $\Fractal[\Phi^{-1}]$ are called the {\em Macro-Fractal and Micro-Fractal Snowflakes with 6 fixed points}, respectively. Their images are drawn on Figures~\ref{micro-flake}--\ref{proj-sn6}.
\bigskip
\vskip30pt

\begin{figure}[h!]
  \begin{center}
    \includegraphics[width=0.6\linewidth, height=0.67\linewidth]{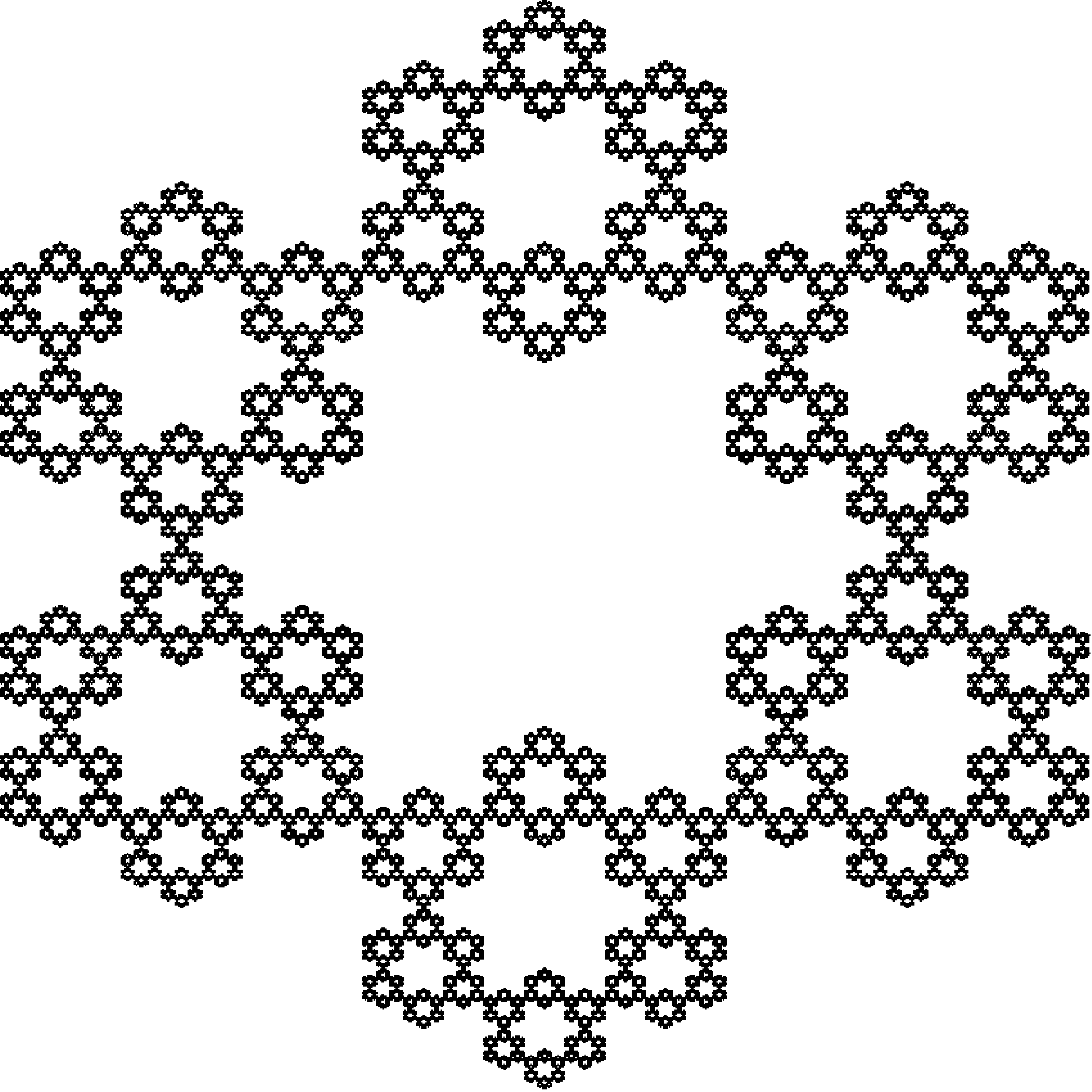}
    \end{center} \vskip30pt
   \caption{Micro-Fractal Snowflake with 6 fixed points.}\label{micro-flake}
\end{figure}
\vfill

\pagebreak
\phantom{m}
\vskip40pt

\begin{figure}[h!]
  \begin{center}
    \includegraphics[width=0.9\linewidth, keepaspectratio]{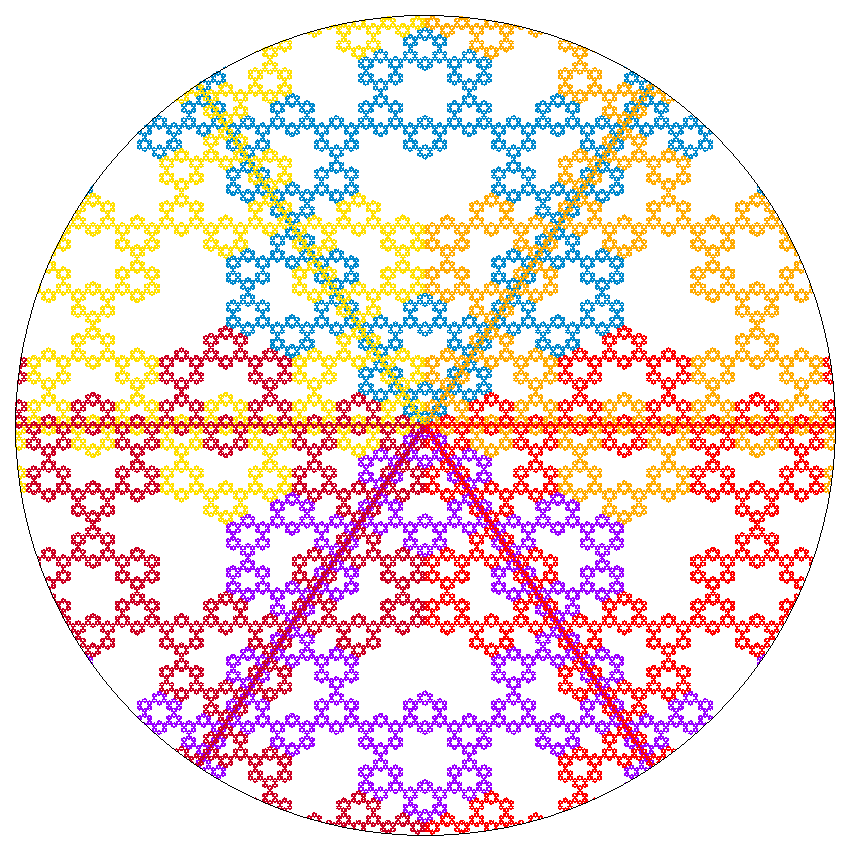}
   \end{center}
   \vskip20pt
   \caption{Macro-Fractal Snowflake with 6 fixed points.}{Cut-and-Zoom Image.}\label{macro-flake}
  \end{figure}
\vfill

\pagebreak
\phantom{m}
\vskip40pt

\begin{figure}[h!]
  \begin{center}
    \includegraphics[width=1\linewidth, keepaspectratio]{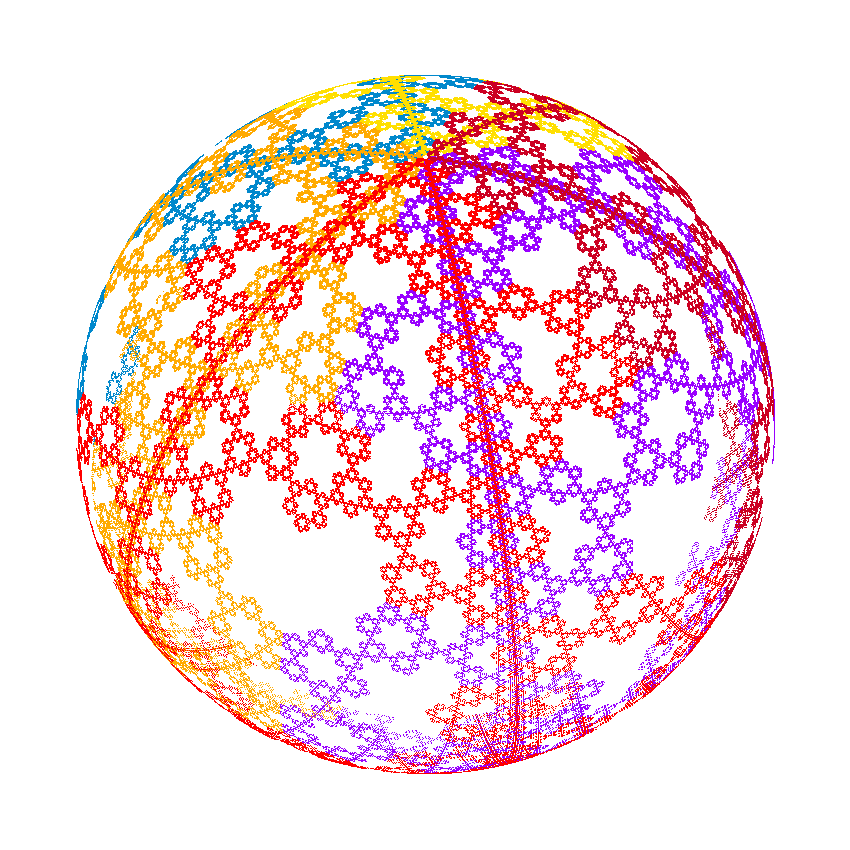}
   \end{center}
   \caption{Macro-Fractal  Snowflake with 6 fixed points.}{Spherical image.}\label{str-sn6}
 \end{figure}
\vfill

\pagebreak
\phantom{m}
\vskip40pt

\begin{figure}[h!]
  \begin{center}
    \includegraphics[width=0.9\linewidth, keepaspectratio]{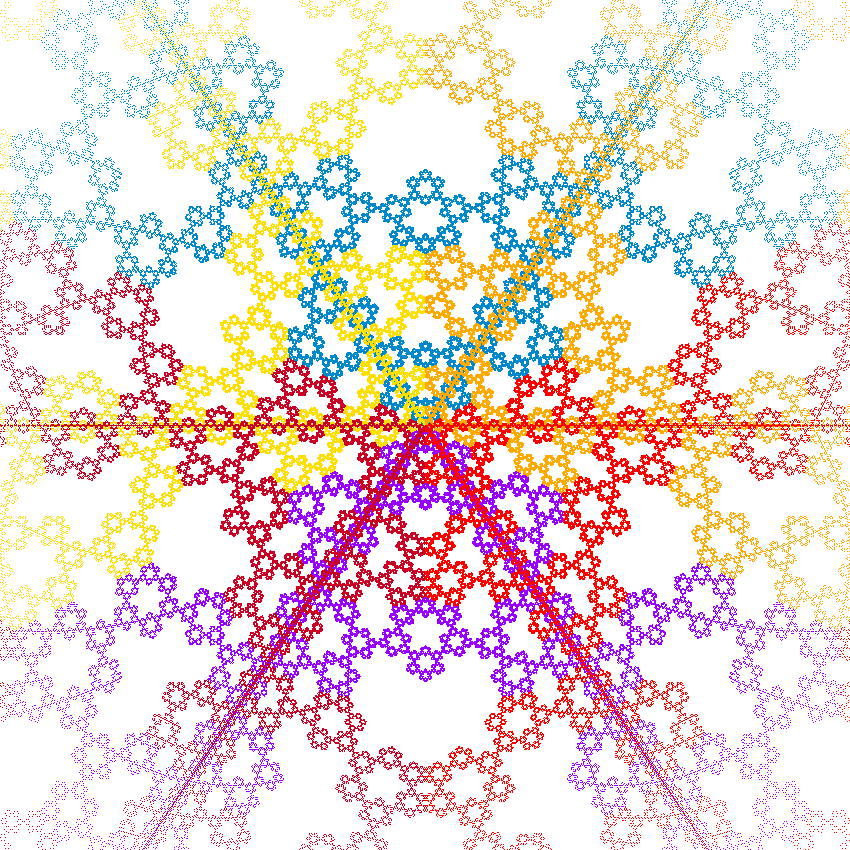}
   \end{center}
    \vskip20pt
   \caption{Macro-Fractal  Snowflake with 6 fixed points.}{Spherical image at a neighborhood of infnity.}\label{sph-sn6}
\end{figure}
\vfill

\pagebreak
\phantom{m}
\vskip40pt

\begin{figure}[h!]
  \begin{center}
    \includegraphics[width=0.95\linewidth, keepaspectratio]{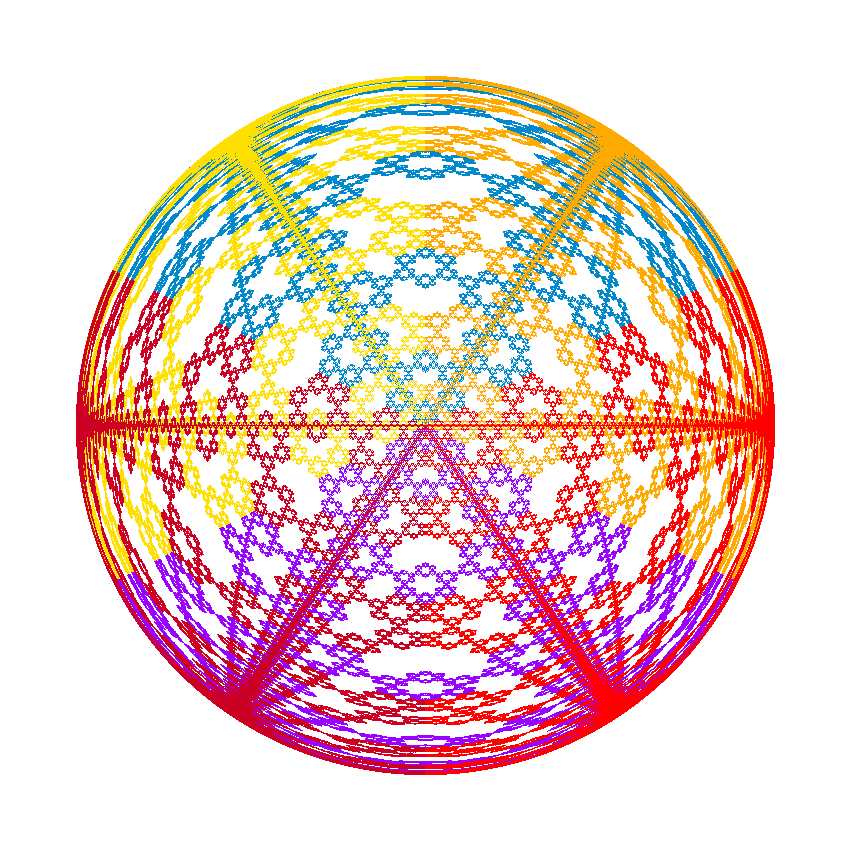}
   \end{center}
   \caption{Macro-Fractal  Snowflake with 6 fixed points.}{Semi-spherical image.}\label{semi-sn6}
\end{figure}
\vfill
\pagebreak
\phantom{m}
\vskip40pt

\begin{figure}[h!]
  \begin{center}
    \includegraphics[width=0.9\linewidth]{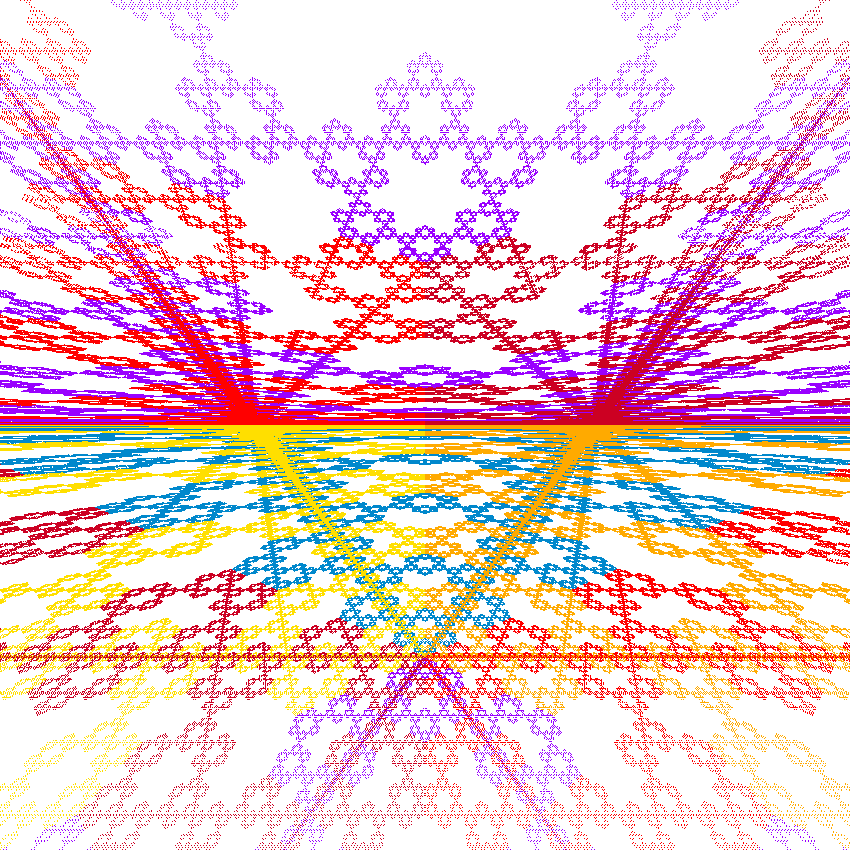}
 \end{center}  \vskip20pt
   \caption{Macro-Fractal  Snowflake with 6 fixed points.}{Projective image.}\label{proj-sn6}
\end{figure}
\vfill

\pagebreak

\subsection{Micro- and Macro-Fractal Snowflakes with 7 fixed points}
Let $$F_7=\{0\}\cup F_6=\{0\}\cup \{e^{i\pi k/3}:0\le k<6\}$$be the 7-element set consisting of the center and the vertices of a regular 6-gon on the complex plane. Consider the expanding multi-valued function
$$\Phi:\IC\setmap \IC,\;\;\Phi:z\mapsto 3z-2F_7$$whose inverse $$\Phi^{-1}:\IC\setmap\IC,\;\;\Phi^{-1}:z\mapsto \tfrac13z+\tfrac23F_7$$is a contracting multi-valued function. The dual fractals $\Fractal[\Phi]$ and $\Fractal[\Phi^{-1}]$ are called the {\em Macro-Fractal and Micro-Fractal Snowflakes with 7 fixed points}, respectively. Their images are drawn on Figures~\ref{micro-sn7}--\ref{proj-sn7}.
\vskip30pt

\begin{figure}[h!]
  \begin{center}
    \includegraphics[width=0.6\linewidth, height=0.68\linewidth]{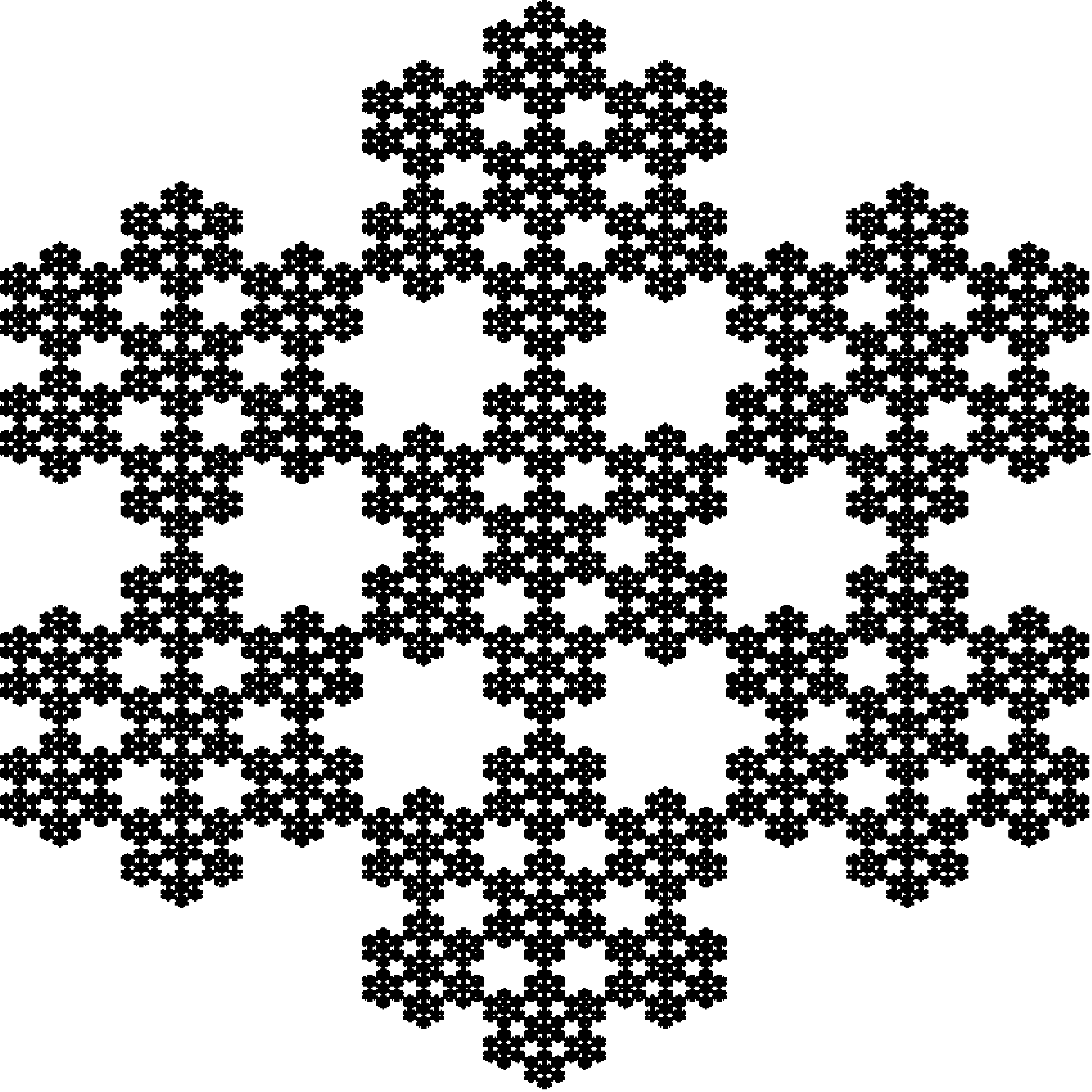}
   \end{center} \vskip20pt
   \caption{Micro-Fractal Snowflake with 7 fixed points.}\label{micro-sn7}
\end{figure}
\vfill
\pagebreak
\phantom{m}
\vskip40pt

\begin{figure}[h!]
  \begin{center}
    \includegraphics[width=0.9\linewidth, keepaspectratio]{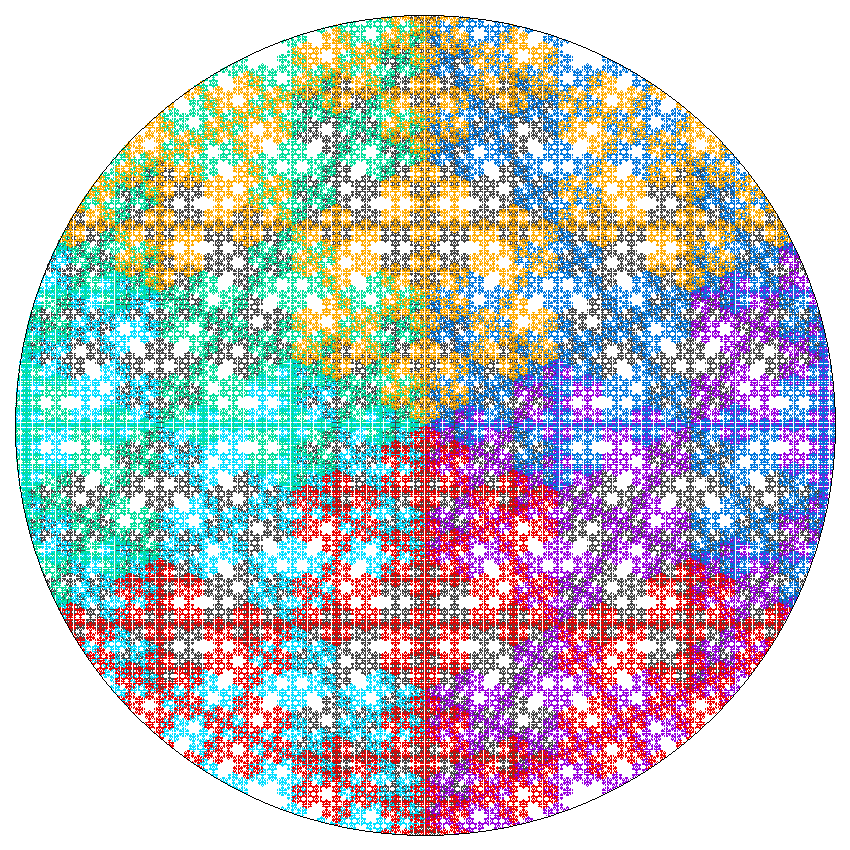}
   \end{center}
   \caption{Macro-Fractal Snowflake with 7 fixed points.}{Cut-and-Zoom image.}\label{macro-sn7}
    \end{figure}
\vfill
\pagebreak
\phantom{m}
\vskip40pt

\begin{figure}[h!]
  \begin{center}
    \includegraphics[width=0.9\linewidth, keepaspectratio]{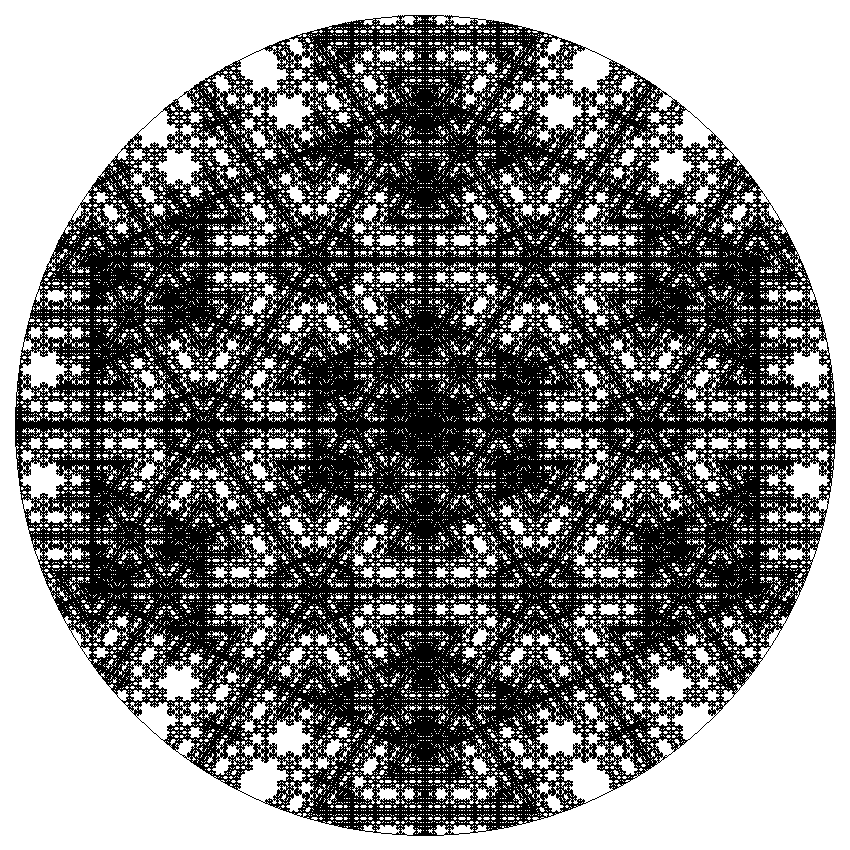}
   \end{center}\vskip20pt
   \caption{Micro-Fractal Snowflake with 7 fixed points.}{Interference effects on its black-and-white picture.}\label{macro-sn7-bw}
\end{figure}
\vfill
\pagebreak

\phantom{m}
\vskip40pt

\begin{figure}[h!]
  \begin{center}
    \includegraphics[width=1\linewidth, keepaspectratio]{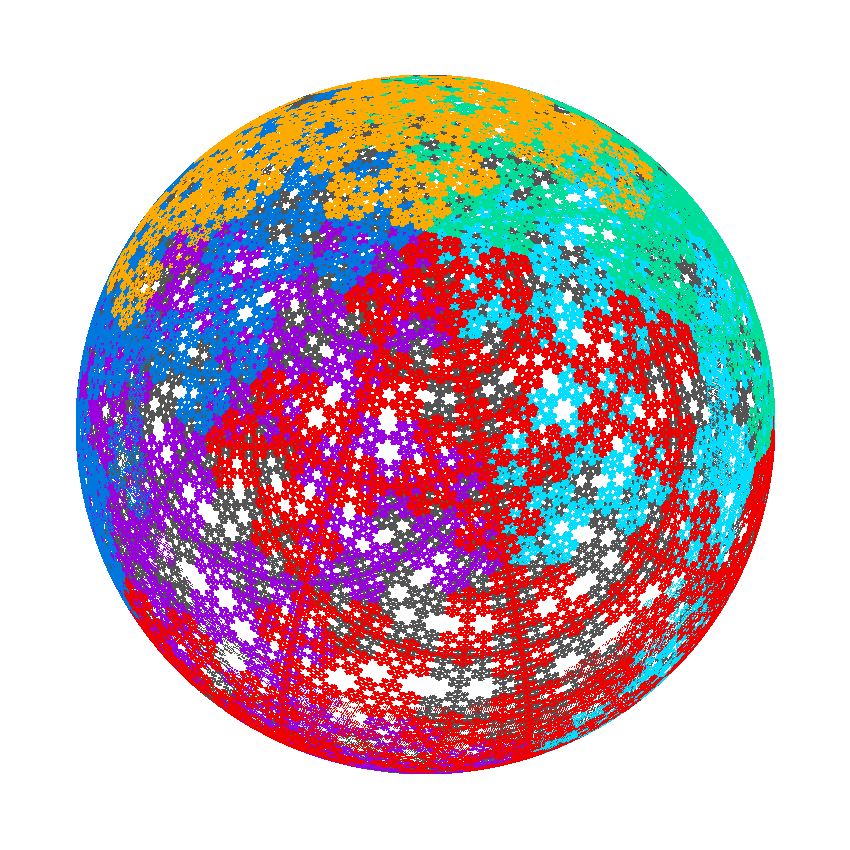}
   \end{center}
   \caption{Micro-Fractal Snowflake with 7 fixed points.}{Spherical image.}\label{str-sn7}
\end{figure}
\vfill
\pagebreak
\phantom{m}
\vskip40pt

\begin{figure}[h!]
  \begin{center}
    \includegraphics[width=0.9\linewidth, keepaspectratio]{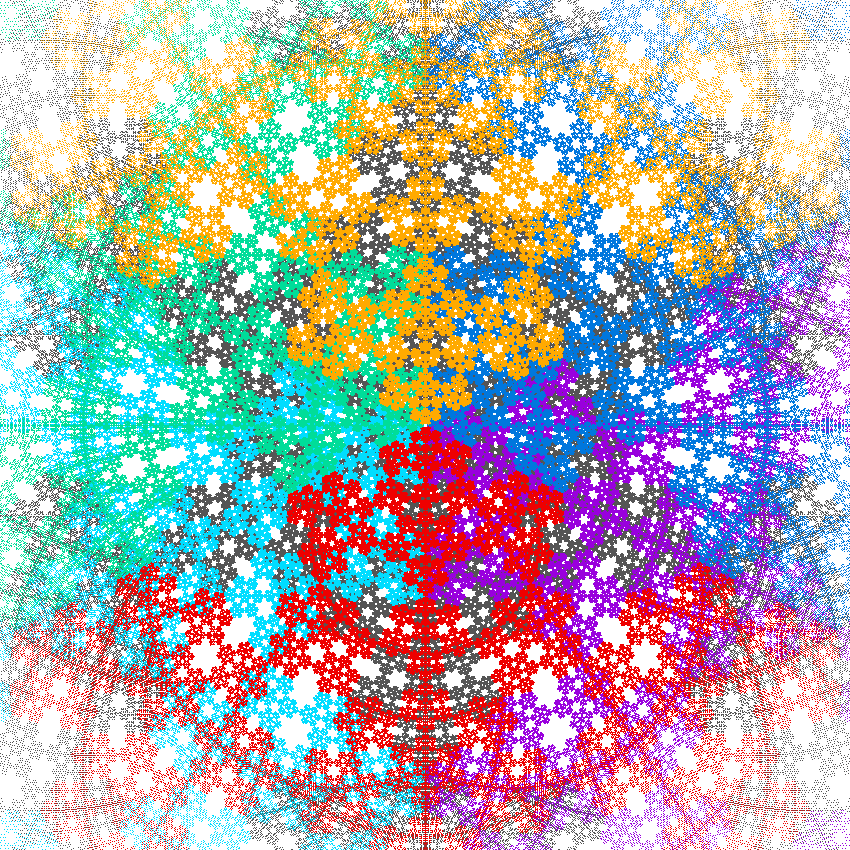}
   \end{center}\vskip20pt
   \caption{Micro-Fractal Snowflake with 7 fixed points.}{Spherical image at a neighborhood of infinity.}\label{sph-sn7}
\end{figure}
\vfill
\pagebreak
\phantom{m}
\vskip40pt

\begin{figure}[h!]
  \begin{center}
    \includegraphics[width=0.95\linewidth, keepaspectratio]{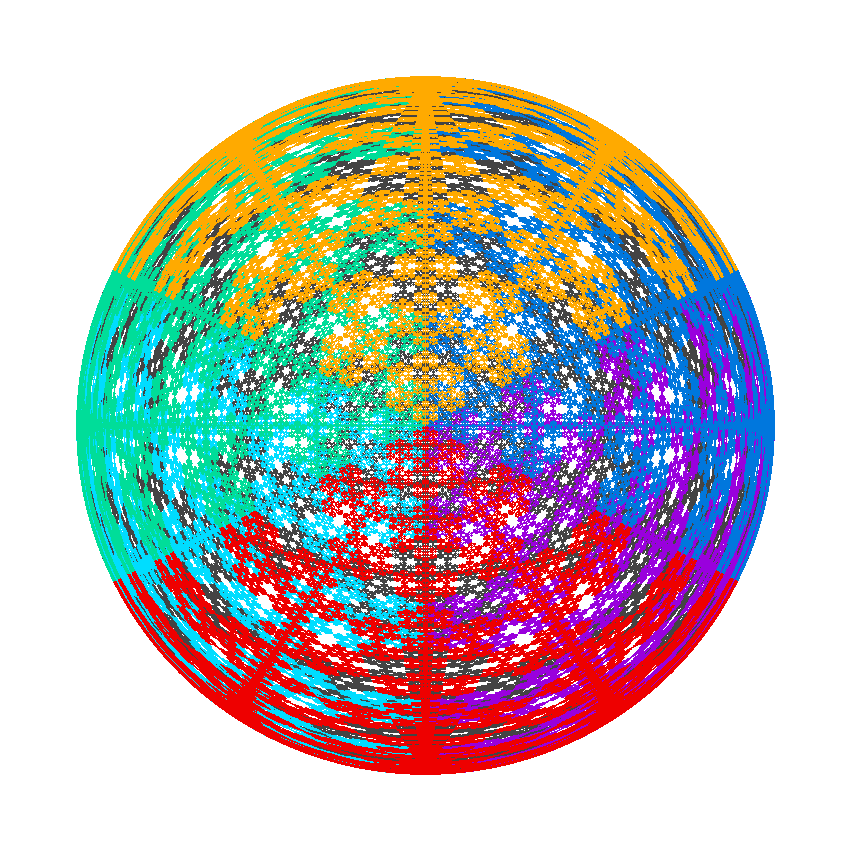}
   \end{center}
   \caption{Micro-Fractal Snowflake with 7 fixed points.}{Semi-spherical image.}\label{semi-sn7}
\end{figure}
\vfill
\pagebreak
\phantom{m}
\vskip40pt

\begin{figure}[h!]
  \begin{center}
    \includegraphics[width=0.9\linewidth]{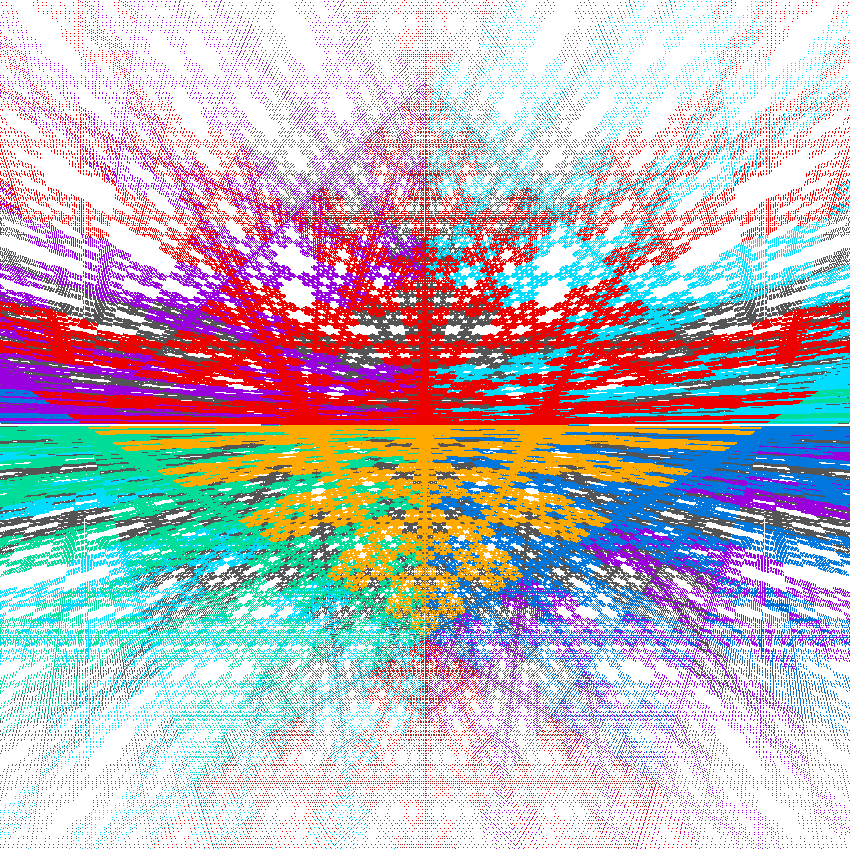}
   \end{center}\vskip20pt
   \caption{Micro-Fractal Snowflake with 7 fixed points.}{Projective image.}\label{proj-sn7}
\end{figure}
\vfill
\pagebreak
\phantom{m}
\vskip40pt

\subsection{Sierpi\'nski Micro- and Macro-Carpets}
Consider the 8-element subset $$F_8=\{-1,0,1\}^2\setminus\{(0,0)\}$$of the plane $\IR^2$ and two multi-valued functions
$$\Phi:\IC\setmap \IC,\;\;\Phi:z\mapsto 3z-2F_8$$whose inverse $$\Phi^{-1}:\IC\setmap\IC,\;\;\Phi^{-1}:z\mapsto \tfrac13z+\tfrac23F_8.$$ The dual fractals $\Fractal[\Phi]$ and $\Fractal[\Phi^{-1}]$ are called the {\em Sierpinski Micro-Carpet and Macro-Carpet}, respectively. Their images are drawn on Figures~\ref{micro-carpet}--\ref{proj-carpet}.
\vskip50pt

\begin{figure}[h!]
  \begin{center}
    \includegraphics[width=0.5\linewidth, keepaspectratio]{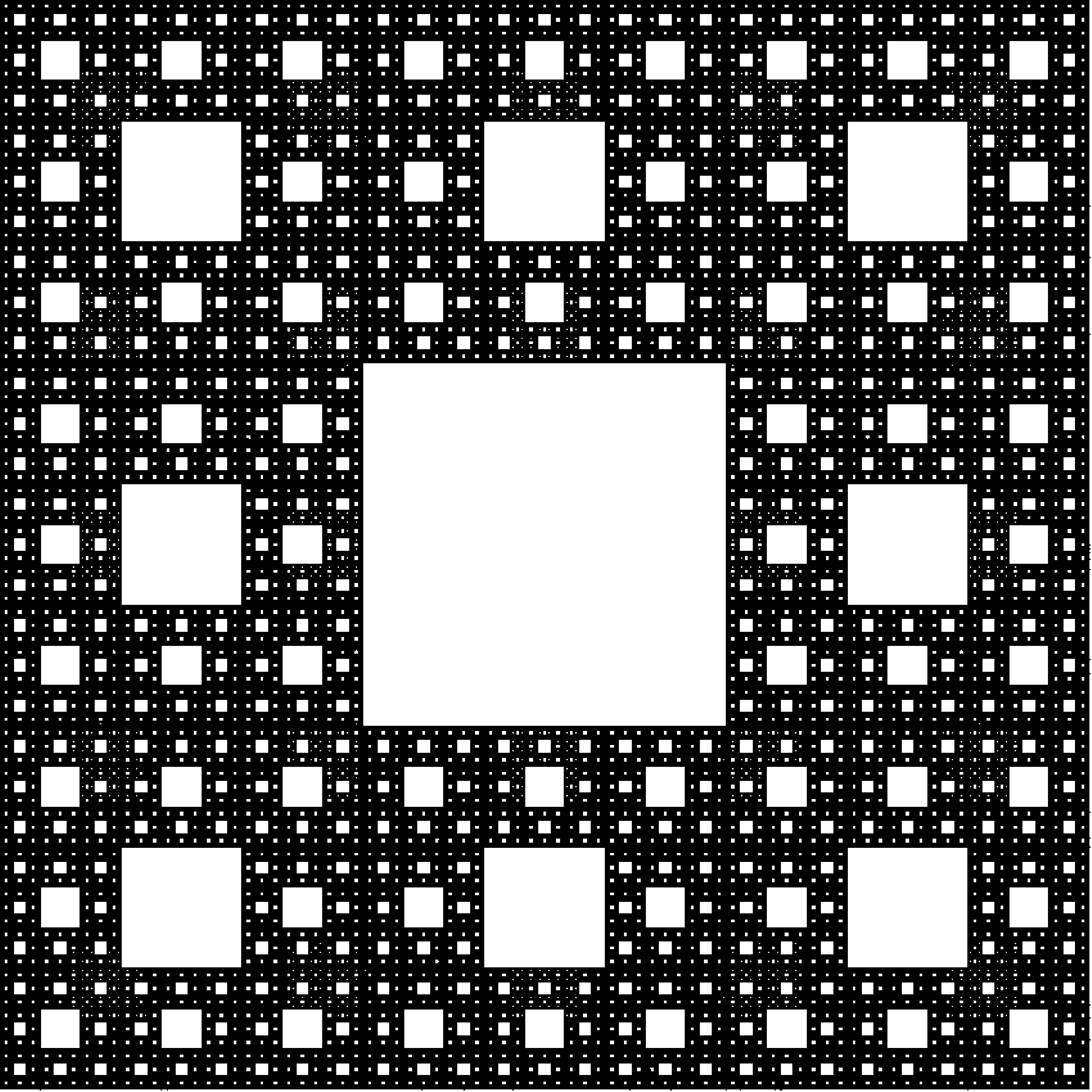}
    \vskip10pt
    \end{center}
   \caption{Sierpi\'nski Micro-Carpet.}\label{micro-carpet}
\end{figure}
\vfill
\pagebreak
\phantom{m}
\vskip40pt

\begin{figure}[h!]
  \begin{center}
    \includegraphics[width=0.9\linewidth, keepaspectratio]{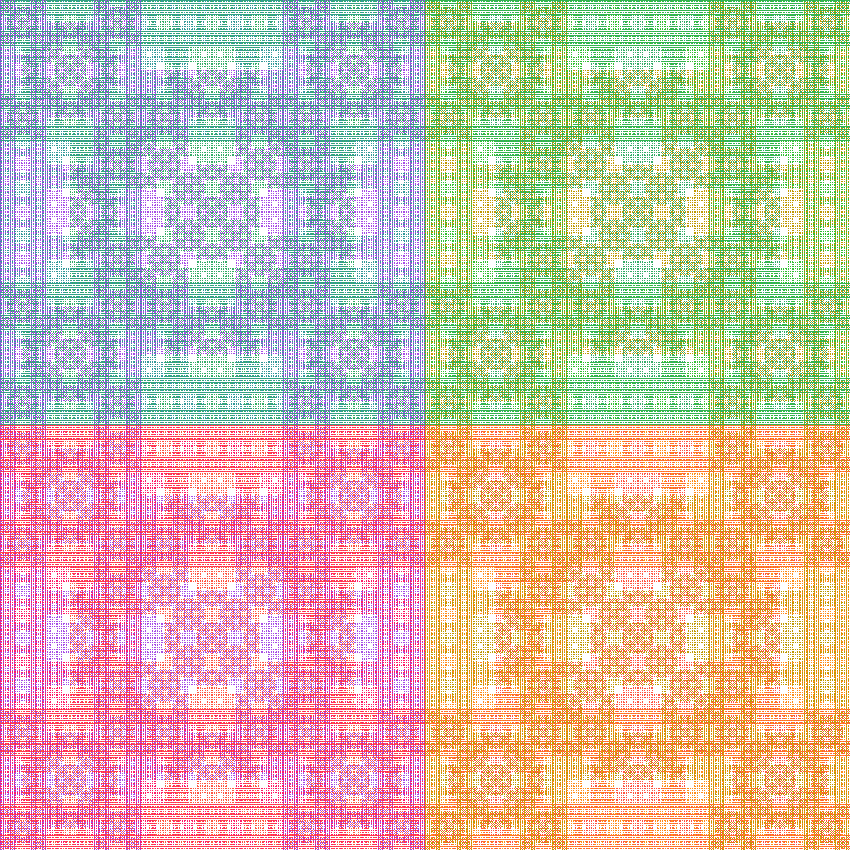} \end{center}\vskip20pt
   \caption{Sierpi\'nski Macro-Carpet.}{Cut-and-Zoom color image}\label{macro-carpet}
\end{figure}
\vfill
\pagebreak
\phantom{m}
\vskip40pt

\begin{figure}[h!]
  \begin{center}
    \includegraphics[width=0.9\linewidth, keepaspectratio]{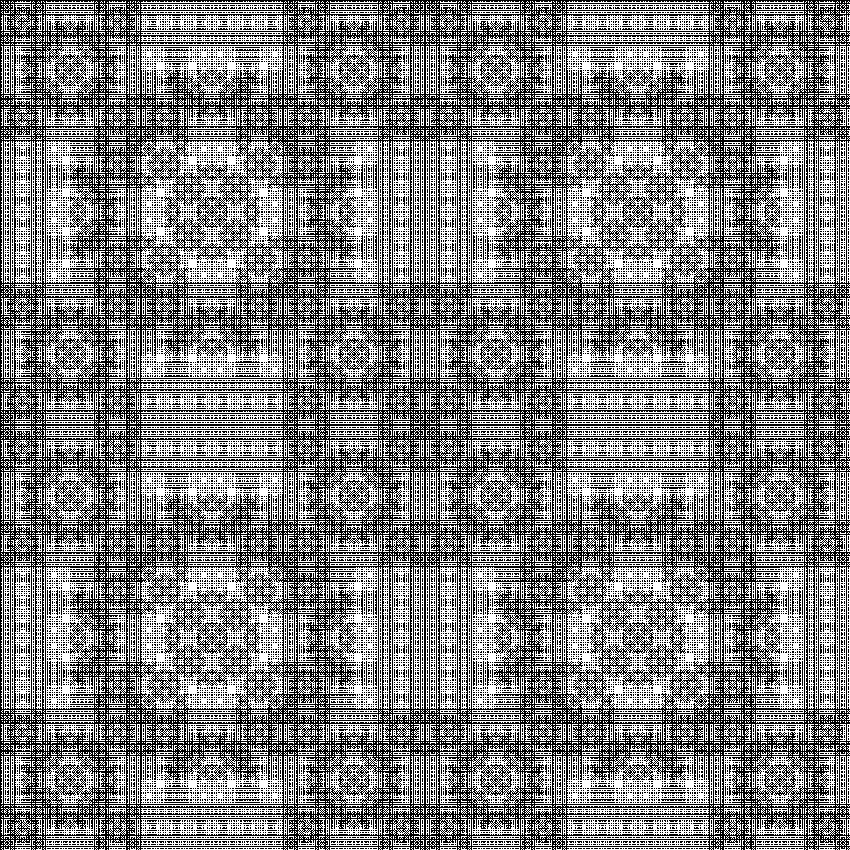}
  \end{center}\vskip20pt
   \caption{Sierpi\'nski Macro-Carpet.}{Interference effects on its black-and-white picture.}\label{macro-carpet-bw}
\end{figure}

\vfill
\pagebreak
\phantom{m}
\vskip40pt

\begin{figure}[h!]
  \begin{center}
    \includegraphics[width=1\linewidth, keepaspectratio]{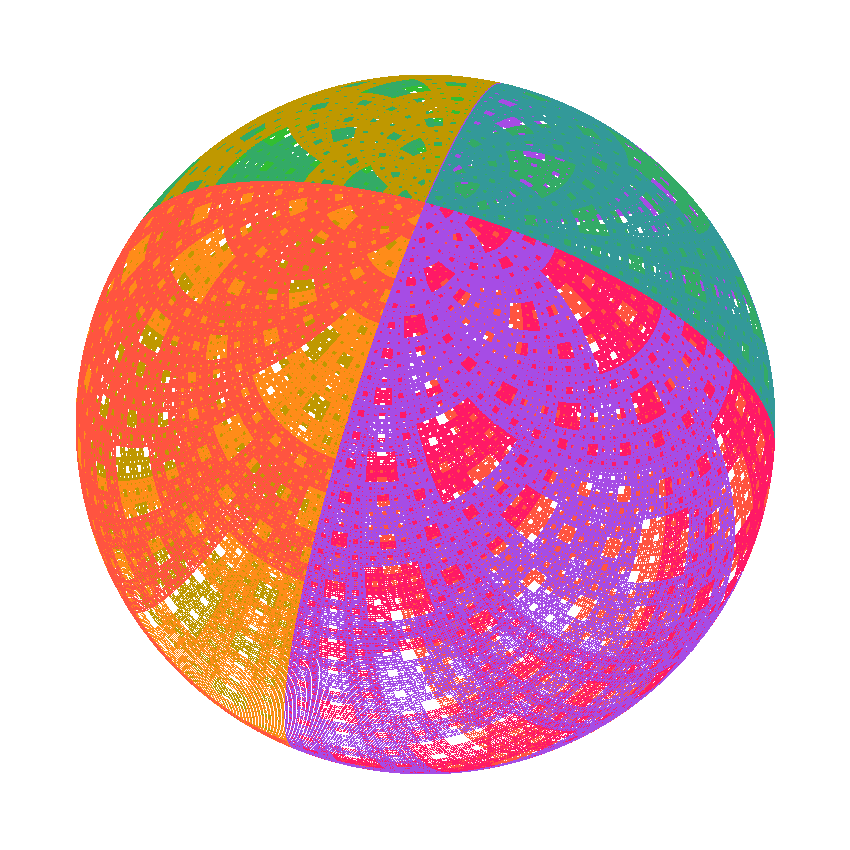}
   \end{center}
   \caption{Sierpi\'nski Macro-Carpet.}{Spherical image.}\label{str-carpet}
\end{figure}

\vfill
\pagebreak
\phantom{m}
\vskip40pt

\begin{figure}[h!]
  \begin{center}
    \includegraphics[width=0.9\linewidth, keepaspectratio]{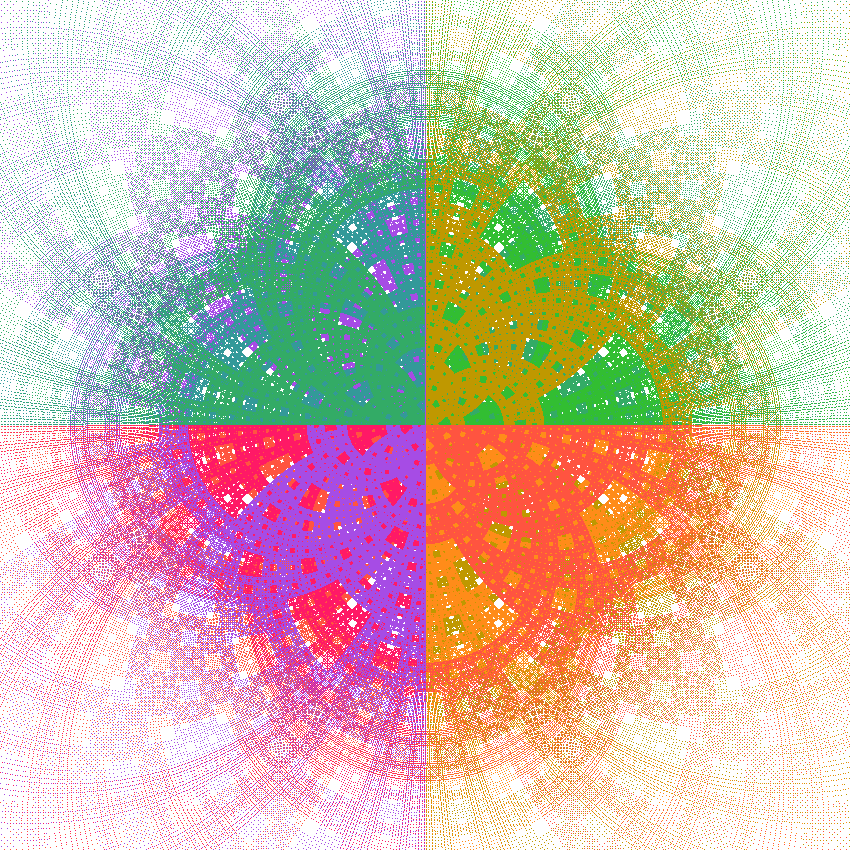}
\end{center}\vskip20pt
   \caption{Sierpi\'nski Macro-Carpet.}{Spherical image at neighborhood of infinity.}\label{sph-curp}
\end{figure}
\vfill
\pagebreak
\phantom{m}
\vskip40pt

\begin{figure}[h!]
  \begin{center}
    \includegraphics[width=0.95\linewidth, keepaspectratio]{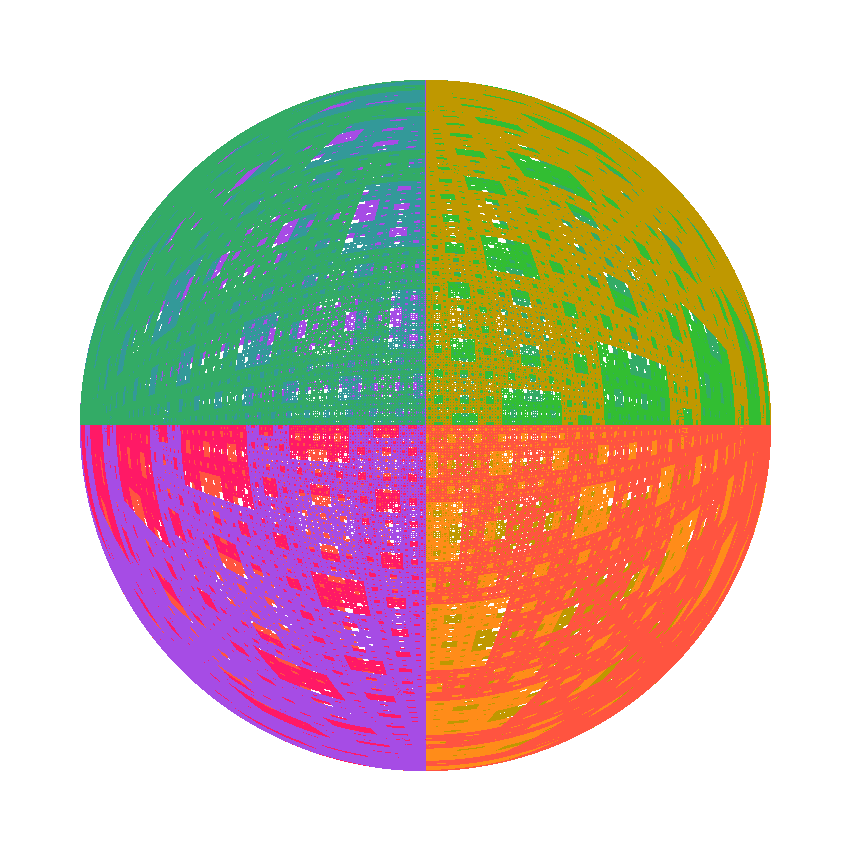}
   \end{center}
   \caption{Sierpi\'nski Macro-Carpet.}{Semi-spherical image.}\label{semi-serp}
\end{figure}
\vfill
\pagebreak
\phantom{m}
\vskip40pt

\begin{figure}[h!]
  \begin{center}
    \includegraphics[width=0.9\linewidth]{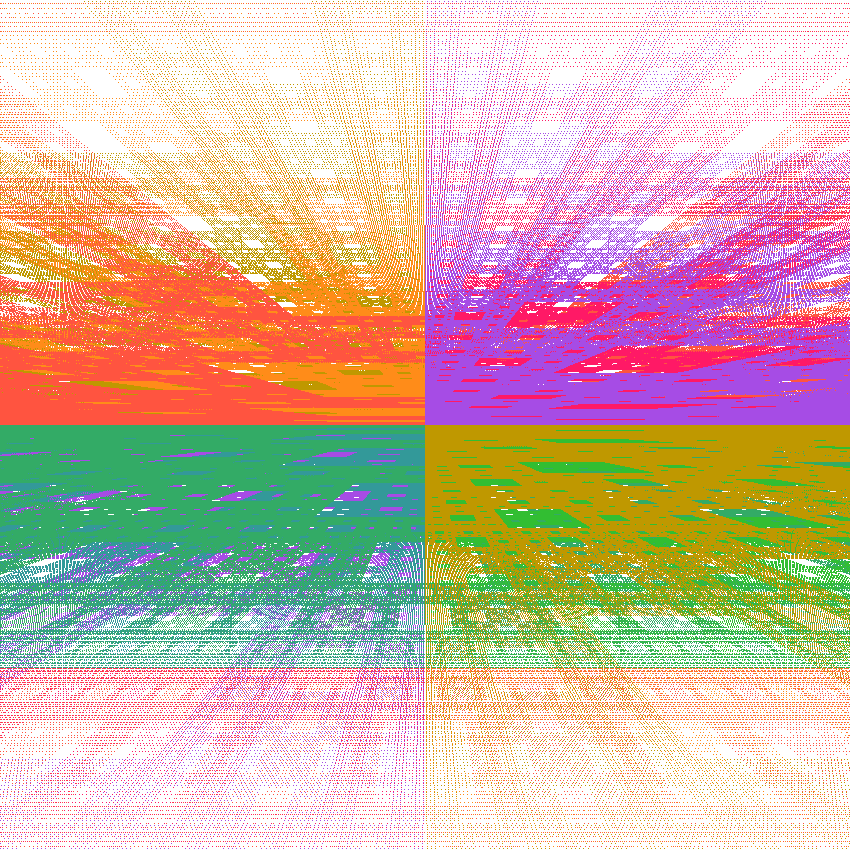}
    \end{center}\vskip20pt
   \caption{Sierpi\'nski Macro-Carpet.}{Projective image.}\label{proj-carpet}
\end{figure}
\end{document}